\author{Benjamin Antieau and Elden Elmanto}
\newcommand{\myauthor}{Benjamin Antieau and Elden Elmanto}
\newcommand{\mytitle}{Descent for semiorthogonal decompositions}
\newcommand{\pdftitle}{\mytitle}
\title{Descent for semiorthogonal decompositions}
\newcommand{\df}[1]{{\bf #1}\index{#1}\xspace}
\definecolor{todo}{rgb}{1,0,0}
\definecolor{conditional}{rgb}{0,1,0}
\definecolor{e-mail}{rgb}{0,.40,.80}
\definecolor{reference}{rgb}{.20,.60,.22}
\definecolor{mrnumber}{rgb}{.80,.40,0}
\definecolor{citation}{rgb}{0,.40,.80}
\let\oldmarginpar\marginpar
\renewcommand\marginpar[1]{\-\oldmarginpar[\raggedleft\footnotesize #1]%
{\raggedright\footnotesize #1}}
\newcommand{\Ascr}{\mathcal{A}}
\newcommand{\Cscr}{\mathcal{C}}
\newcommand{\Dscr}{\mathcal{D}}
\newcommand{\Escr}{\mathcal{E}}
\newcommand{\Fscr}{\mathcal{F}}
\newcommand{\Gscr}{\mathcal{G}}
\newcommand{\Mscr}{\mathcal{M}}
\newcommand{\Oscr}{\mathcal{O}}
\newcommand{\Pscr}{\mathcal{P}}
\newcommand{\Sscr}{\mathcal{S}}
\newcommand{\Vscr}{\mathcal{V}}
\newcommand{\D}{\mathrm{D}}
\newcommand{\E}{\mathrm{E}}
\newcommand{\F}{\mathrm{F}}
\newcommand{\G}{\mathrm{G}}
\renewcommand{\H}{\mathrm{H}}
\newcommand{\K}{\mathrm{K}}
\renewcommand{\L}{\mathrm{L}}
\newcommand{\M}{\mathrm{M}}
\newcommand{\N}{\mathrm{N}}
\newcommand{\R}{\mathrm{R}}
\newcommand{\CC}{\mathds{C}}
\newcommand{\EE}{\mathds{E}}
\newcommand{\GG}{\mathds{G}}
\newcommand{\NN}{\mathds{N}}
\newcommand{\PP}{\mathds{P}}
\newcommand{\RR}{\mathds{R}}
\renewcommand{\SS}{\mathds{S}}
\newcommand{\ZZ}{\mathds{Z}}
\newcommand{\dg}{\mathrm{dg}}
\newcommand{\op}{\mathrm{op}}
\newcommand{\ex}{\mathrm{ex}}
\newcommand{\perf}{\mathrm{perf}}
\newcommand{\Sp}{\mathcal{S}\mathrm{p}}
\newcommand{\ExFilt}{\mathrm{ExFilt}}
\newcommand{\Filt}{\mathrm{Filt}}
\DeclareMathOperator{\id}{id}
\newcommand{\cofib}{\mathrm{cofib}}
\newcommand{\fib}{\mathrm{fib}}
\renewcommand{\geq}{\geqslant}
\renewcommand{\leq}{\leqslant}
\newcommand{\Ho}{\mathrm{Ho}}
\newcommand{\Cat}{\mathrm{Cat}}
\newcommand{\PrL}{\mathrm{Pr}^\L}
\newcommand{\HH}{\mathrm{HH}}
\DeclareMathOperator{\Pic}{Pic}
\DeclareMathOperator{\Fun}{Fun}
\DeclareMathOperator{\Hom}{Hom}
\newcommand{\Map}{\mathrm{Map}}
\newcommand{\MapSp}{\mathbf{Map}}
\newcommand{\ShAut}{\mathbf{Aut}}
\newcommand{\ShMap}{\mathbf{Map}}
\newcommand{\Ex}{\mathrm{Ex}}
\newcommand{\Mod}{\mathrm{Mod}}
\newcommand{\Modscr}{\Mscr\mathrm{od}}
\newcommand{\Perf}{\mathrm{Perf}}
\newcommand{\Perfscr}{\Pscr\mathrm{erf}}
\newcommand{\Ind}{\mathrm{Ind}}
\newcommand{\CAlg}{\mathrm{CAlg}}
\newcommand{\Aff}{\mathrm{Aff}}
\newcommand{\Sch}{\mathrm{Sch}}
\newcommand{\ShB}{\mathbf{B}}
\newcommand{\ShBr}{\mathbf{Br}}
\newcommand{\ShCat}{\mathbf{Cat}}
\newcommand{\ShFilt}{\mathbf{Filt}}
\newcommand{\ShExFilt}{\mathbf{ExFilt}}
\DeclareMathOperator{\PGL}{PGL}
\newcommand{\ShBPGL}{\mathbf{BPGL}}
\newcommand{\Gm}{\mathds{G}_{m}}
\newcommand{\SB}{\mathrm{SB}}
\DeclareMathOperator*{\colim}{colim}
\newcommand{\et}{\mathrm{\acute{e}t}}
\newcommand{\Et}{\mathrm{\acute{E}t}}
\newcommand{\fppf}{\mathrm{fppf}}
\DeclareMathOperator{\Spec}{Spec}
\newcommand{\we}{\simeq}
\newcommand{\iso}{\cong}
\theoremstyle{plain}
\newtheorem{theorem}{Theorem}[section]
\newtheorem*{theorem*}{Theorem}
\newtheorem{lemma}[theorem]{Lemma}
\newtheorem{proposition}[theorem]{Proposition}
\newtheorem{corollary}[theorem]{Corollary}
\newtheoremstyle{named}{}{}{\itshape}{}{\bfseries}{.}{.5em}{#1 \thmnote{#3}}
\theoremstyle{named}
\theoremstyle{definition}
\newtheorem{definition}[theorem]{Definition}
\newtheorem{example}[theorem]{Example}
\newtheorem{question}[theorem]{Question}
\newtheorem{remark}[theorem]{Remark}
\newtheorem{warning}[theorem]{Warning}
\begin{document}
\maketitle

\begin{abstract}
    \noindent
    We prove descent theorems for semiorthogonal decompositions using
    techniques from derived algebraic geometry. Our methods allow us to capture
    more general filtrations of derived categories and even marked filtrations,
    where one descends not only admissible subcategories but also preferred
    objects.

    \paragraph{Key Words.}
    Derived categories, semiorthogonal decompositions, twisted forms, and descent.

    \paragraph{Mathematics Subject Classification 2010.}
    \href{http://www.ams.org/mathscinet/msc/msc2010.html?t=14Fxx&btn=Current}{14F05},
    \href{http://www.ams.org/mathscinet/msc/msc2010.html?t=14Fxx&btn=Current}{14F22},
    \href{http://www.ams.org/mathscinet/msc/msc2010.html?t=14Mxx&btn=Current}{14M17},
    \href{http://www.ams.org/mathscinet/msc/msc2010.html?t=18Exx&btn=Current}{18E30}.
\end{abstract}


\section{Introduction}

The theory of semiorthogonal decompositions is a crucial tool in understanding derived
categories of schemes, especially derived categories of smooth proper
schemes over fields. This paper concerns the construction of new semiorthogonal
decompositions from known ones in various situations that involve ``twists" or
``descent". We expect our work will be useful for future explorations of the relationship
between derived categories and rationality as studied in~\cite{auel-bernardara}. We consider two questions.

\begin{question}[Absolute descent]\label{q:1}
    Suppose that $G$ is a group scheme acting on a scheme $X$ and suppose that
    there is a semiorthogonal decomposition
    $$\Perfscr(X)\we\langle\Escr_0,\ldots,\Escr_r\rangle.$$ When does
    $\Perfscr([X/G])$ admit a descended semiorthogonal decomposition?
\end{question}
 
Here $[X/G]$ is the stack quotient of $X$ by $G$. A sufficient condition on the
action, namely that it be upper triangular with respect to the semiorthogonal
decomposition, was first obtained by Elagin in \cite{elagin}.

\begin{question}[Relative descent]\label{q:2}
    Let $X$ and $Y$ be $k$-schemes for some
    field $k$ and suppose that $X$ and $Y$ become isomorphic over some separable
    extension $l/k$. If $X$
    admits a $k$-linear semiorthogonal decomposition
    $$\Perfscr(X)\we\langle\Escr_0,\ldots,\Escr_r\rangle,$$ then $\Perfscr(Y_l)$
    admits an $l$-linear semiorthogonal decomposition
    $\Perfscr(Y_l)\we\langle(\Escr_0)_l,\ldots,(\Escr_r)_l\rangle$.\footnote{In this paper, $\Perfscr(X)$
    will denote the small idempotent complete stable $\infty$-category of perfect complexes of
    $\Oscr_X$-modules on $X$. The homotopy category of $\Perfscr(X)$ is thus the usual
    triangulated category $\Perf(X)$ of perfect complexes on $X$. When $X$ is
    regular, noetherian, and quasi-separated, $\Perfscr(X)\we\Dscr^b(X)$, the
    bounded derived category of coherent sheaves on $X$,
    but in general these are different.}
    When does this semiorthogonal decomposition on $Y_l$ descend to $Y$?
\end{question}

For example, Blunk--Sierra--Smith~\cite{blunk-sierra-smith} constructed a semiorthogonal decomposition on
the blowup of $\PP^2$ at $3$ points which descends to any twisted form.
Similarly, Bernardara showed \cite{bernardara} that
Beilinson's semiorthogonal decomposition of the derived category of
$\Perfscr(\PP^{n}_k)$ descends to $\Perfscr(Y)$ if $Y$ is the Severi--Brauer
variety of a central simple algebra of degree $n+1$ over $k$.

Note that Question~\ref{q:2} can be reduced to Question~\ref{q:1} by replacing
$l$ with its Galois closure, but we will
see that there is some advantage in treating the relative case separately.
In both cases, the philosophical answer is that the
semiorthogonal decomposition should descend as long as the group action or
descent data preserves the semiorthogonal decomposition. 

One of the goals of this paper is to make this intuition precise. In
particular, we wanted to understand why it is enough to check only
$1$-categorical information in the results of Elagin~\cite{elagin},
Auel--Bernardara~\cite{auel-bernardara}, and
Ballard--Duncan--McFaddin~\cite{ballard-duncan-mcfaddin} when in
principal one has to glue higher homotopical objects, a process which requires
higher-degree analogues of the cocycle condition in general.

The starting point of our
approach is the idea that a semiorthogonal
decomposition constitutes a special kind of filtration on the derived category.
We will see that after descending the filtration admissibility
comes along for the ride.

Fix a base scheme $S$ and let $\ShCat$ be the stack\footnote{In this paper our
stacks are fppf stacks.} which assigns to each affine $\Spec R\rightarrow S$ the $\infty$-category $\ShCat(R)=\Cat_R$ of small
idempotent complete $R$-linear stable $\infty$-categories.\footnote{We will
work with stable $\infty$-categories instead of dg categories out of personal
preference. The theory could also be developed in the language of the
equivalent theory of dg categories.}
Now, fix a poset $P$ and let $\ShFilt_P$ be the prestack of linear categories
equipped with $P$-shaped filtrations.

\begin{theorem}[Filtrations, Theorem~\ref{thm:desc} and Proposition~\ref{prop:fib-disc}]\label{thm:filtrations}
    Let $P$ be a poset. The prestack $\ShFilt_P$ of $P$-shaped filtrations is a
    stack. Moreover, the forgetful functor $\ShFilt_P\rightarrow\ShCat$ has
    discrete fibers.
\end{theorem}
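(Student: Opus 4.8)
The plan is to realize $\ShFilt_P$ as a full subprestack of a prestack that is manifestly a stack, and then to check that the conditions cutting it out are local for the fppf topology. Adjoin a terminal object $\top$ to $P$; then a $P$-shaped filtration of $\Cscr\in\Cat_R$ is a functor $F\colon P^\triangleright\to\Cat_R$ with $F(\top)\we\Cscr$ whose transition functors are all fully faithful, whose subcategories $F(p)\hookrightarrow\Cscr$ are admissible, and which exhausts $\Cscr$, and the forgetful functor is $F\mapsto F(\top)$. The cotensor $\Fun(P^\triangleright,\ShCat)\colon R\mapsto\Fun(P^\triangleright,\Cat_R)$ is a stack: $\Fun(P^\triangleright,-)$ is a right adjoint, hence commutes with the totalization over the \v{C}ech nerve of an fppf cover that expresses the stack condition, reducing the claim to the corresponding one for $\ShCat$, which we are given. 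It therefore suffices to show that the defining conditions on an object of $\Fun(P^\triangleright,\Cat_R)$ — pointwise full faithfulness, admissibility of each $F(p)\hookrightarrow\Cscr$, and exhaustiveness — persist under base change and can be detected after a faithfully flat $R\to R'$.

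Full faithfulness and exhaustiveness are ``mapping-spectrum'' conditions and descend at once. An $R$-linear functor $G\colon\Dscr\to\Escr$ in $\Cat_R$ is fully faithful iff $\Map_\Dscr(x,y)\to\Map_\Escr(Gx,Gy)$ is an equivalence of $R$-module spectra for all $x,y$; since $\ShCat$ is a stack and mapping spectra satisfy faithfully flat descent, the source, target, and this map are all determined by their base changes to $R'$, and a map of $R$-module spectra that becomes an equivalence after the conservative functor $-\otimes_R R'$ was already one. For exhaustiveness, $\langle F(p):p\rangle\we\Cscr$ iff the Karoubi quotient $\Cscr/\langle F(p)\rangle$ is zero, and base change preserves Karoubi quotients and detects vanishing of linear categories (again by conservativity of $-\otimes_R R'$). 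Hence the full subprestack of $\Fun(P^\triangleright,\ShCat)$ on diagrams satisfying these two conditions is itself a stack.

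What remains — and this is the heart of the matter — is that admissibility descends: if $\iota\colon\Bscr\hookrightarrow\Cscr$ in $\Cat_R$ is fully faithful and $\iota_{R'}$ is admissible for some faithfully flat $R\to R'$, then $\iota$ is admissible. Admissibility of the full $P^\triangleright$-diagram reduces to this single-inclusion case, since the right adjoint to an admissible inclusion restricts to a right adjoint on any admissible intermediate subcategory; and one treats the left and the right adjoint to $\iota$ separately. I would pass to Ind-completions, where $\Ind(\iota)\colon\Ind(\Bscr)\hookrightarrow\Ind(\Cscr)$ is a fully faithful colimit-preserving inclusion of compactly generated $R$-linear presentable categories and so automatically admits a right adjoint $\Gamma$; then $\iota$ is right admissible exactly when $\Gamma$ preserves compact objects, equivalently when $\Ind(\Bscr)$ is a localizing \emph{and} colocalizing subcategory of $\Ind(\Cscr)$, i.e. a smashing-type Bousfield localization — property-like data that one wants to see descends along $R\to R'$. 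The difficulty I expect to be the main obstacle is that the base change $-\otimes_R R'$ is a \emph{left} adjoint: its compatibility with the colimit-preserving $\Ind(\iota)$ is formal, but its compatibility with the right adjoint $\Gamma$ — equivalently, the statement that the simultaneously localizing-and-colocalizing nature of $\Ind(\Bscr)$ is already visible over $R$ — must be argued by hand, and it is exactly here that faithful flatness (rather than mere flatness) is used, through conservativity of $-\otimes_R R'$ on module spectra. Granting this, admissibility ``comes along for the ride'' once the underlying diagram has been descended through the stack $\Fun(P^\triangleright,\ShCat)$, and $\ShFilt_P$ is a stack.

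For the discreteness of the fibers, fix $R$ and $\Cscr\in\Cat_R$. The $\infty$-category of fully faithful functors into $\Cscr$ is equivalent to the poset $\mathrm{Sub}(\Cscr)$ of replete full subcategories of $\Cscr$: for full subcategories $\Dscr_1,\Dscr_2$ the space of functors $\Dscr_1\to\Dscr_2$ over $\Cscr$ is contractible when $\Dscr_1\subseteq\Dscr_2$ and empty otherwise. Hence a $P^\triangleright$-diagram with pointwise fully faithful transition maps and value $\Cscr$ at $\top$ is the same datum as a functor $P\to\mathrm{Sub}(\Cscr)$, and imposing admissibility and exhaustiveness merely restricts to a full sub-poset $Q_\Cscr\subseteq\Fun(P,\mathrm{Sub}(\Cscr))$, which is again a poset since a functor category into a poset is one. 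Thus the fiber of $\ShFilt_P(R)\to\Cat_R$ over $\Cscr$ is the poset $Q_\Cscr$; in particular it has no nontrivial automorphisms, so its maximal subgroupoid — the space of $P$-shaped filtrations of $\Cscr$ — is the discrete set of objects of $Q_\Cscr$. Equivalently, for any filtration $F$ the automorphisms of $\Cscr$ preserving $F$ form a union of connected components of $\Aut_{\Cat_R}(\Cscr)$, since whether $\phi$ preserves $F$ depends only on the path component of $\phi$. Either way this is the assertion that $\ShFilt_P\to\ShCat$ has discrete fibers, and it completes the proof.
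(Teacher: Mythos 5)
Your write-up proves more than the statement asks, and the surplus is where its only real gap lies. In the paper, $\ShFilt_P$ classifies functors $P\to\Cat_R$ whose transition maps are fully faithful --- there is no ambient category, no exhaustiveness condition, and, crucially, no admissibility in its definition; admissibility is what cuts out the subprestack $\mathbf{Sod}_P$ of semiorthogonal decompositions, and its fppf-local nature is a separate and much deeper result (Theorem~\ref{thm:et-sod}, Corollary~\ref{cor:sod-desc}). Restricted to the actual content of Theorem~\ref{thm:filtrations}, your argument is correct and essentially the paper's proof of Theorem~\ref{thm:desc} and Proposition~\ref{prop:fib-disc}: the cotensor $\Fun(P^\triangleright,\ShCat)$ (the paper uses $\Fun(P,\ShCat)$, with a lax pullback for the ambient-category variants, and its forgetful functor is $\colim_P$, so the fiber over $\Cscr$ is $\ShExFilt_P^\Cscr$) is a stack because $\ShCat$ is, by Theorem~\ref{thm:lurie}; full faithfulness of transition functors is an fppf-local condition, checked exactly as you do via base change of mapping spectra and conservativity of $-\otimes_RR'$ for faithfully flat $R\to R'$ (the paper cites \cite[Corollary~6.11]{dag11} for this); and the fiber over $\Cscr$ is a poset because fully faithful inclusions into $\Cscr$ form a poset $\mathrm{Sub}^\Cscr$ --- your assertion that the space of functors over $\Cscr$ between two full subcategories is empty or contractible is precisely what Lemma~\ref{lem:0category} proves by a fiber-sequence argument, and you should supply that one-line justification rather than assert it.

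The step you yourself flag as ``the heart of the matter'' --- that right/left admissibility of $\Ascr\subseteq\Cscr$ can be detected after a faithfully flat base change --- is not needed for this theorem, and in your proposal it is also not proved: you reduce it to the compatibility of the right adjoint on Ind-completions with the base-change left adjoint and then proceed by ``granting this.'' That compatibility is exactly the nontrivial point; the paper establishes it (Theorem~\ref{thm:et-sod}) not through a smashing-localization analysis but by proving the Beck--Chevalley right-adjointability of all base-change squares over the \v{C}ech nerve (Lemma~\ref{lem:etale-adj}, via a generators argument) and then assembling the right adjoint as a Cartesian section of the unstraightened cosimplicial diagram. So if admissibility were part of the definition you adopted, your proof would have a genuine gap at precisely that step. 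Two smaller points: exhaustiveness is likewise not a defining condition of $\ShFilt_P$ (it only enters when identifying the fiber of the colimit functor, where no descent is required), and your reformulation ``exhaustive iff the Karoubi quotient of $\Cscr$ by the subcategory generated by the $\F_p\Cscr$ vanishes'' agrees with the paper's definition (that $\colim_P\F_\star\Cscr\to\Cscr$ is an equivalence) only for filtered $P$; the paper instead verifies fppf-locality of exhaustiveness by comparing the two \v{C}ech limits directly.
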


The proof of Theorem~\ref{thm:filtrations} is rather formal, although it has
important consequences for the questions above. The next result is
deeper. Let $P$ be a poset and let $\mathbf{Sod}_P\subseteq\ShFilt_P$ be the subprestack
of $P$-shaped semiorthogonal decompositions. 

\begin{theorem}[Semiorthogonal decompositions, Corollary~\ref{cor:sod-desc}]\label{thm:sods}
    For any poset $P$, the prestack $\mathbf{Sod}_P$ is a stack.
\end{theorem}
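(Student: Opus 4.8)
The plan is to bootstrap off Theorem~\ref{thm:filtrations}. Since $\ShFilt_P$ is already a stack and $\mathbf{Sod}_P$ is, by construction, a \emph{full} subprestack of $\ShFilt_P$ --- over each affine $\Spec R\to S$ the $\infty$-groupoid $\mathbf{Sod}_P(R)$ consists of those $P$-filtered $R$-linear categories whose structure maps are admissible and satisfy the requisite semiorthogonality relations --- it suffices to show that this defining condition is local for the fppf topology. Concretely: given an fppf cover $\Spec R'\to\Spec R$ and an object $\Escr_\bullet\in\ShFilt_P(R)$ whose base change $\Escr_\bullet\otimes_R R'$ lies in $\mathbf{Sod}_P(R')$, we must deduce $\Escr_\bullet\in\mathbf{Sod}_P(R)$. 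The converse, that base change of a semiorthogonal decomposition is one, is formal, since $-\otimes_R R'\colon\Cat_R\to\Cat_{R'}$ is symmetric monoidal and hence preserves $R$-linear adjunctions and base-changes mapping objects.

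I would then treat the two ingredients of ``is a semiorthogonal decomposition'' separately. For the semiorthogonality relations: the relevant mapping objects between the steps of the filtration are $R$-module spectra which commute with flat base change, so their vanishing over $R'$ forces their vanishing over $R$ because $-\otimes_R R'$ is conservative on $\Mod_R$; any exhaustiveness or generation clause in the definition of $\mathbf{Sod}_P$ is fppf-local for the same reason. For admissibility: write $\Cscr$ for the top of the filtration and, for a typical structure map, $i\colon\Escr\hookrightarrow\Cscr$. Along the \v{C}ech nerve $(R')^{\otimes_R(\bullet+1)}$ of $R\to R'$, the base changes of $i$ admit right (resp.\ left) adjoints. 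By uniqueness of adjoints and the $2$-functoriality of base change, these adjoints --- together with their units and counits --- refine the descent datum already carried by $i$ to a descent datum for the whole adjoint pair. Since $\ShCat$ is a stack (the input underlying Theorem~\ref{thm:filtrations}), this glues to an adjunction $i\dashv i^{R}$ over $R$: the functor $i^{R}\colon\Cscr\to\Escr$ descends, its unit and counit descend, and the triangle identities persist because mapping spaces in $R$-linear stable categories satisfy fppf descent. Hence $i$ is admissible over $R$, and $\Escr_\bullet$ is a semiorthogonal decomposition.

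Putting these together, $\mathbf{Sod}_P$ is a full subprestack of the stack $\ShFilt_P$ closed under fppf descent, hence is itself a stack. If it is cleaner to avoid carrying a general poset through the argument, one can first restrict along the various maps $\{0<1\}\to P$ to reduce the admissibility (and semiorthogonality) conditions to the atomic case of a single admissible subcategory $\Escr\subseteq\Cscr$; the displayed argument handles that case verbatim and functoriality in $P$ then comes for free from the stack structure on $\ShFilt_P$.

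The step I expect to be the crux is the admissibility descent --- precisely, the claim that the adjoints manufactured fiberwise over the \v{C}ech nerve cohere into honest descent data. This is exactly where ``only $1$-categorical information'' is needed: the higher coherences are not chosen but \emph{forced}, since passage to adjoints is unique and is preserved by the base-change functors, so no higher cocycle condition is ever checked by hand. The remaining pieces --- faithfully flat descent for the vanishing conditions, and the formal ``only if'' direction --- are routine.
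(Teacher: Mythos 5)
Your overall route is the paper's route: reduce to showing that (exhaustiveness and) admissibility of a fixed filtration is fppf-local, get the easy direction from stability of admissibility under base change (this is Proposition~\ref{prop:bcadmissible}, proved there by a pure-tensor/thick-subcategory argument rather than by appeal to monoidality), and then conclude as in Corollary~\ref{cor:sod-desc} from Theorem~\ref{thm:desc}. The gap is at exactly the point you flag as the crux, and it is not closed by the phrases doing the work in your sketch. First, the statement that the levelwise adjoints $r^n$ over the \v{C}ech nerve ``refine the descent datum'' amounts to the invertibility of the Beck--Chevalley transformations $q^*_\Ascr\circ r^n\rightarrow r^m\circ q^*_\Cscr$ for every structure map $q$ of the nerve. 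Uniqueness of adjoints does not give this: it tells you that any two right adjoints of $i^m$ are abstractly equivalent, not that the particular mate of the given commuting square is an equivalence (mates of commuting squares are not invertible in general). This is a property of these specific base-change squares, and the paper proves it as Lemma~\ref{lem:etale-adj} by a generators argument whose nontrivial input is that mapping spectra are compatible with base change (the step quoted from \cite[Lemma~2.7]{ag}); ``$2$-functoriality of base change'' would have to be instantiated by essentially this computation.

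Second, ``since $\ShCat$ is a stack, this glues to an adjunction'' is not how the gluing is effected: descent in $\ShCat$ consumes a coherent descent datum, and producing that datum for the adjoint is precisely what is at stake. The paper instead verifies the objectwise criterion for existence of a right adjoint (\cite[5.2.7.8]{htt}): for $x\in\Cscr$ the values $r^\bullet(x_{U_\bullet})$ define a section of the unstraightened Cartesian fibration $\widetilde{\Ascr}\rightarrow\Delta^{\op}$, the Beck--Chevalley equivalences make this section Cartesian, hence it yields an object $r(x)\in\Ascr\we\lim_\Delta\Ascr_{U_\bullet}$, and a final limit computation of mapping spectra checks the universal property. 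The higher coherences are indeed ``forced'' rather than chosen, but the rigorous form of that slogan in the paper is the Riehl--Verity theorem on the $2$-category $\mathrm{Adj}$ (pointwise adjointability suffices), not stackness of $\ShCat$. So your intuition locates the difficulty correctly and names the right mechanism, but the two steps you assert --- invertibility of the mates, and the assembly of the adjoint from them --- are the content of Theorem~\ref{thm:et-sod} and need the arguments above (or equivalents) to be supplied.
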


This theorem says that the only obstruction to descending a semiorthogonal
decomposition is descending the associated filtration. It is perhaps one of the
main insights of this paper and demonstrates the local
nature of admissibility which, on first pass, is rather surprising.
The proof of Theorem~\ref{thm:sods} highlights the power of working in a
higher categorical setting where problems of descent and base
change can be formulated and solved in an elegant manner.

While writing this paper, we were made aware of a different approach to variants of our
results by Belmans--Okawa--Ricolfi \cite{bor-sod}. These authors also studied a
version of the stack which in our notation would be denoted by
$\mathbf{Sod}^{\Perfscr(X)}_{[n]}$ where $X$ is scheme of characteristic zero. They
proved fppf descent in this setting using a completely different approach,
namely, by studying Fourier-Mukai kernels instead of appealing to machinery
derived algebraic geometry. While their descent results are more restricted,
they were able to access some deep geometric information about this stack.
Among other things they proved that these stacks are, in fact, algebraic
spaces; we conclude only that these stacks are discrete in this paper.

Theorems~\ref{thm:filtrations} and~\ref{thm:sods} provide complete
answers to Questions~\ref{q:1} and~\ref{q:2} --- the stack $\mathbf{Sod}_P$
controls the descent problems posed by both questions. See also
Sections~\ref{sec:filquo} and~\ref{sec:desctwist}.
As we illustrate in Section~\ref{sec:ex} these problems are actually quite tractable in practice. Moreover, we also give results on
descending individual objects in the pieces of semiorthogonal decompositions. 

Theorem~\ref{thm:filtrations} is proved in Section~\ref{sec:filtrations} and
Theorem~\ref{thm:sods} in Section~\ref{sec:sod}. The twisted Brauer space
perspective is developed in Section~\ref{sec:tbs} and many examples are given
in Section~\ref{sec:ex}.

Previously, Elagin studied descent for
semiorthogonal decompositions in~\cite{elagin}, showing that one could descend
semiorthogonal decompositions of triangulated categories along certain
comonads. The key condition of Elagin's main theorem is that the comonad should
be upper triangular with respect to the semiorthogonal decomposition, meaning
in other words that the comonad respects the filtration coming from the
semiorthogonal decomposition. Elagin's work was recently revisited by
Shinder who gave a new proof~\cite{shinder}. A similar approach is given in
work of Bergh and Schn\"urer~\cite{bergh-schnurer}.

In practice, there are many cases where working algebraic geometers have
established descent for semiorthogonal decompositions or exceptional
collections in interesting settings
by hand. We list some here. Historically, the first is Bernardara's work mentioned above on Severi--Brauer
schemes~\cite{bernardara} followed by work of
Blunk--Sierra--Smith~\cite{blunk-sierra-smith} on degree six del Pezzo surfaces,
unpublished work of Blunk~\cite{blunk2012derived} on some twisted
Grassmannians, and work of
Baek~\cite{baek} on twisted Grassmannians in
general. Perhaps the two most impressive works in this
direction are the paper of Auel and Bernardara on derived categories of del
Pezzo varieties over general fields~\cite{auel-bernardara} and the work of
Ballard--Duncan--McFaddin on toric varieties~\cite{ballard-duncan-mcfaddin}.
In these the authors construct explicit vector bundles generating their
semiorthogonal decompositions. For more on the connection of our work with this
previous work, see Section~\ref{sec:ex}.

Under the hood, our approach bears some similarity to the recent work of
Scherotzke--Sibilla--Talpo in~\cite{scherotzke-sibilla-talpo} who prove that
$\infty$-categories equipped with finite semiorthogonal decompositions indexed by
possibly varying index sets
admit certain limits. In our work, the indexing sets will be fixed.

The main ideas in this paper go back to 2013 when a first draft of the paper
was produced by the first author. However, at that time, Alexander Kuznetsov pointed out
Elagin's work and it was decided not to pursue the project further.
In the meantime, the problem of descent for semiorthogonal decompositions has
returned again and again and it seemed like those early results were worth
making public after all. This has been done here with many simplifications and
extensions.

\paragraph{Notation.}
Let $\Sscr$ be the $\infty$-category of spaces and let $\Sp$ be the
$\infty$-category of spectra.
If $\Cscr$ is an $\infty$-category and $x,y\in\Cscr$, then
$\Map_\Cscr(x,y)$
denotes the mapping space from $x$ to $y$ and, if $\Cscr$ is stable, then $\ShMap_\Cscr(x,y)$ denotes the
mapping spectrum.

\paragraph{Conventions.}
In this paper, a {\bf prestack} on a small $\infty$-category $\Cscr$ will mean a functor $\Cscr^{\op} \rightarrow
\widehat{\Cat}_{\infty}$ where $\widehat{\Cat}_{\infty}$ is the
$\infty$-category of possibly large $\infty$-categories. If $\tau$ is a
topology on $\Cscr$, by a {\bf $\tau$-stack} we mean a prestack which satisfies
$\tau$-descent. Functors of the form $\Cscr^{\op} \rightarrow \Sscr$ will be
called {\bf presheaves} and those that satisfy $\tau$-descent are {\bf
$\tau$-sheaves}. If $X$ is a presheaf on $\Aff_R$, the category of affine
schemes over a commutative ring $R$, there is a symmetric monoidal stable
$\infty$-category $\Perfscr(X)$ of perfect complexes on $X$ defined as
$\lim_{\Spec S\rightarrow X}\Perfscr(\Spec S)$; see Example~\ref{ex:stable}.

\paragraph{Acknowledgments.}
We would like to thank Aravind Asok, Asher Auel, Tom Bachmann, Jesse Burke, David Gepner, Rune Haugseng,
Aaron Mazel-Gee, Patrick McFaddin, Dan Murfet and Nicol\`o Sibilla for numerous
helpful conversations on the topic. We would especially like to thank Asher
Auel, Pieter Belmans and an anonymous referee for very helpful comments on an earlier draft.

BA was supported by NSF Grant DMS-1552766.
This material is based upon work supported by the National Science Foundation
under Grant No. DMS-1440140 while BA and EE were in residence at the
Mathematical Sciences Research Institute in Berkeley, California, during the
Spring 2019 semester. Part of this work was also carried out when EE was a member of the Center for Symmetry and Deformation at the University of Copenhagen.


\section{The stack of filtrations and variants}\label{sec:filtrations}

In this section we prove that the $\infty$-category of possibly marked filtered
idempotent complete stable $\infty$-categories forms an fppf sheaf.

\subsection{Background on stable $\infty$-categories}

We give a brief set of definitions and remarks
about stable $\infty$-categories. For details, see~\cite[Chapter~1]{ha}.

\begin{definition}
    \begin{enumerate}
        \item[(a)] An $\infty$-category $\Cscr$ is {\bf stable} if it is pointed,
            admits finite limits and finite colimits, and if the suspension
            functor $\Sigma\colon\Cscr\rightarrow\Cscr$ is an equivalence.
        \item[(b)] A functor $F\colon\Cscr\rightarrow\Dscr$ between two stable
            $\infty$-categories is {\bf exact} if it preserves finite limits or
            equivalently (for stable $\infty$-categories) finite colimits.
        \item[(c)] An $\infty$-category $\Cscr$ is {\bf idempotent complete} if every
            idempotent in $\Cscr$ admits a splitting.
    \end{enumerate}
\end{definition}

\begin{remark}
    If $\Cscr$ is a stable $\infty$-category, then the homotopy category
    $\Ho(\Cscr)$, which is just an ordinary category, admits a canonical
    triangulated category structure \cite[Theorem 1.1.2.14]{ha}. A functor $F$ as in (b) is exact if and
    only if the functor $\Ho(F)\colon\Ho(\Cscr)\rightarrow\Ho(\Dscr)$ is exact.
    Additionally, $\Cscr$ is idempotent complete if and only if $\Ho(\Cscr)$ is
    idempotent complete.
\end{remark}

\begin{example}\label{ex:stable}
    \begin{enumerate}
        \item[(1)] If $R$ is a commutative ring, then $\Dscr(R)$ and
            $\Perfscr(R)$ are a idempotent complete stable $\infty$-category.
            In this case, $\Perfscr(R)\subseteq\Dscr(R)$ is the full
            subcategory of compact objects, i.e., those objects $M$ such that
            the functor $\Hom_{\D(R)}(M,-)$ preserves arbitrary coproducts,
            where $\D(R)=\Ho(\Dscr(R))$.
        \item[(2)] If $S$ is an algebraic stack, then we can define
            $\Perfscr(S)$ by right Kan extension. Namely, $\Perfscr(S)$ is the
            value at $S$ of the diagonal functor in the commutative diagram
            $$\xymatrix{
                \Aff^{\op}\ar[d]\ar[r]^{\Perfscr(-)}&\Cat_{\infty}^\perf\\
                \Pscr(\Aff)^{\op}\ar@{.>}[ur]_{\R\Perfscr(-)},
            }$$
            where $\Pscr(\Aff)$ is the $\infty$-category of presheaves of
            spaces on $\Aff$. Practically speaking, we compute
            $$\Perfscr(S)=\lim_{\Spec R\rightarrow S}\Perfscr(R),$$
            although since $\Aff^{\op}$ is not small, care needs to be taken to ensure that this limit exists in
            $\Cat_{\infty}^\perf$. When $S$ is quasi-compact and
            quasi-separated with quasi-affine diagonal, for example $[X/G]$
            where $G$ is an affine algebraic group acting on a quasicompact
            quasiseparated scheme $X$, this limit does exist. Indeed, there is
            a cover $U\rightarrow S$ where $U$ is affine and where each
            $U\times_S\cdots\times_S U$ is quasicompact and quasi-affine. Then,
            $\Perfscr(S)\we\lim_\Delta\Perfscr(\check{C}(U))$, where
            $\check{C}(U)$ is the \v{C}ech complex of $U\rightarrow S$.
        \item[(3)] The $\infty$-categories $\Sp$ of spectra and $\Sp^\omega$ of
            finite spectra are stable $\infty$-categories. The former plays the
            role of $\Dscr(\SS)$ while the latter plays the role of
            $\Perfscr(\SS)$, where $\SS$ is the sphere spectrum, the initial
            commutative ring in stable homotopy theory.
        \item[(4)] If $C$ is a pretriangulated dg category, then there is a
            naturally associated stable $\infty$-category $\N_{\dg}(C)$.
    \end{enumerate}
\end{example}

The theory of idempotent complete stable $\infty$-categories and exact functors is organized into
an $\infty$-category $\Cat_{\infty}^\perf$.\footnote{For technical purposes
later, we will need to use that $\Cat_\infty^\perf$ admits a natural $(\infty,2)$-categorical
structure. Indeed, for two idempotent complete stable
$\infty$-categories there is an idempotent complete stable $\infty$-category
$\Fun^\ex(\Cscr,\Dscr)$ of exact functors. The underlying $\infty$-groupoid
(obtained by forgetting all non-invertible natural transformations between
exact functors) $\iota\Fun^\ex(\Cscr,\Dscr)$ is naturally equivalent to the
mapping space $\Map_{\Cat_\infty^\perf}(\Cscr,\Dscr)$.} Moreover, this
$\infty$-category admits a natural symmetric monoidal structure where, for
example, if $A$ and $B$ are rings, then
$\Perfscr(A)\otimes_{\Perfscr(\ZZ)}\Perfscr(B)\we\Perfscr(A\otimes_{\ZZ}^\L B)$, where
$A\otimes_{\ZZ}^\L B$ denotes the derived tensor product viewed for example as a dg algebra.

If $R$ is a commutative ring, then $\Perfscr(R)$ admits a natural symmetric
monoidal structure (which on the homotopy category gives the derived tensor
product of $R$-modules), whence we may view $\Perfscr(R)$ as a ``highly-structured" commutative ring, more precisely an $E_{\infty}$-ring, object in $\Cat_\infty^\perf$. We thus form categories of modules over them \cite[Chapter 3]{ha} and define
$$\Cat_R=\Mod_{\Perfscr(R)}(\Cat_\infty^\perf).$$
The objects of $\Cat_R$ are $R$-linear idempotent complete stable
$\infty$-categories. We will just call these {\bf $R$-linear categories} for
simplicity. Note that $\Cat_R$ is equivalent to the theory of idempotent
complete stable $\infty$-categories and exact functors enriched in
$\Dscr(R)$.

\begin{remark}
    We can also consider $R$-linear dg categories. There is a model structure
    on the category $\mathrm{dgcat}_R$ of $R$-linear dg categories where the
    weak equivalences are the Morita equivalences: a functor $C\rightarrow E$
    of dg categories is a weak equivalence if the induced functor
    $\D_{\dg}(C)\rightarrow\D_{\dg}(E)$ on dg module categories is an
    equivalence. The $\infty$-category associated to this model category is
    equivalent to $\Cat_R$. Unfortunately, no published reference for this fact
    is known, but see~\cite{cohn} for an unpublished version. An analogous
    statement, due to Shipley~\cite{shipley}, says that the homotopy theory of dg algebras agrees with the
    homotopy theory of $\EE_1$-ring spectra over $R$ if $R$ is a
    commutative ring.
\end{remark}

The assignment $R\mapsto\Cat_R$ can be given the structure of a presheaf
$$\ShCat\colon\CAlg\we\Aff^{\op}\rightarrow\widehat{\Cat}_\infty$$ of large $\infty$-categories
on the category $\Aff^{\op}$ of affine schemes (over $\ZZ$) with $$\ShCat(\Spec
R)=\ShCat(R)=\Cat_R.$$ We call this the {\bf prestack of linear categories}.
We can also work relative to a base commutative ring $R$ and restrict this to a presheaf on $\Aff_R=\CAlg_R^{\op}$, the
category of affine $R$-schemes.

Now, let $S$ be an algebraic stack.
We let $\Cat_S=\ShCat(S)$ be the value on $S$ of the right Kan extension of
$\ShCat\colon\Aff^\op\rightarrow\widehat{\Cat}_\infty$
along the inclusion $\Aff^\op\rightarrow\Pscr(\Aff)^{\op}$. We will call the
objects of $\Cat_S$ simply $\Oscr_S$-linear categories.

\begin{warning}\label{warn:1-aff}
    There is a canonical functor
    $\Mod_{\Perfscr(S)}(\Cat_\infty^\perf) \rightarrow \Cat_S$. We warn the reader
    that this functor is \emph{not} always an equivalence. Algebraic stacks for
    which this is the case are called \df{1-affine} \cite[Definition
    1.3.7]{1-affine}. Examples of $1$-affine algebraic stacks include
    \begin{enumerate}
        \item[(a)]  quasi-compact quasi-separated schemes, or more generally
        \item[(b)] quasi-compact quasi-separated algebraic spaces~\cite[Theorem
            2.1.1]{1-affine}, and even
        \item[(c)] stack quotients of quasi-compact quasi-separated algebraic spaces by affine algebraic
            groups of finite type \cite[Theorem 2.2.2]{1-affine}.
    \end{enumerate}
   For many results in this paper we require that $S$ is
    $1$-affine so that we can view an $\Oscr_S$-linear category more concretely as a small
    idempotent complete stable $\infty$-category with extra structure, namely
    with the structure of a $\Perfscr(S)$-module structure. On the other hand, defining $\Cat_S$ via right Kan extension has the advantage that it inherits descent properties from its value on affine schemes; see \cite[Theorem 1.5.7]{1-affine}.
\end{warning}

\subsection{Filtrations on stable $\infty$-categories} \label{subsec:filt}

We will work everywhere relative to a fixed poset.

\begin{definition}
    A {\bf poset} is a partially ordered set, i.e., a set $P$ together with a binary
    relation $\leq$ satisfying the reflexivity, antisymmetry, and transitivity conditions.
    When we say that an $\infty$-category is a poset, we mean that it is equivalent
    to the nerve of a poset. Equivalently, a poset is an $\infty$-category $\Cscr$
    such that for each pair of objects $x,y\in\Cscr$, the mapping space
    $\Map_\Cscr(x,y)$ is either empty or contractible. We will make no
    notational distinction between considering a poset $P$ as an ordinary
    category or as an $\infty$-category. The posets that we care about in this paper are $P$ is {\bf filtered}: every finite
    set of elements of $P$ has an upper bound.
\end{definition}

Let $P$ be a poset.

\newcommand{\Catex}{\Cat_\infty^\ex}
\newcommand{\FCatex}{\Filt\Cat_\infty^\ex}
\newcommand{\FPCatex}{\Filt_P\Cat_\infty^\ex}

\begin{example} For example, $P$ could be
    \begin{enumerate}
        \item[(i)] the totally ordered set with $n+1$ elements
            $[n]=\{0<1<\cdots<n\}$ for an integer $n\geq 0$;
        \item[(ii)] products such as $[m]\times [n]$, for integers $m,n\geq 0$,
            with the product partial order: $(i,j)\leq(l,k)$ if and only if $i\leq l$ and
            $j\leq k$;
        \item[(iii)] $\NN=\{0,1,2,\ldots\}$ or $\ZZ$ with the usual total
            orders;
        \item[(iv)] the set of finite subsets of a given set with the partial order
            given by set containment.
    \end{enumerate}
\end{example}

Let $S$ be a $1$-affine algebraic stack.
We will study $P$-shaped filtrations on $\Oscr_S$-linear categories.

\begin{definition}
    The $\infty$-category $\Filt_P\Cat_S$ of {\bf $P$-shaped filtrations of
    $\Oscr_S$-linear categories}
    is the full subcategory of the functor category $\Fun(P,\Cat_S)$ on
    those functors $\F_\star\Cscr\colon P\rightarrow\Cat_S$ such that
    for $p\leq q$ in $P$ the induced map $\F_p\Cscr\rightarrow \F_q\Cscr$
    is fully faithful.\footnote{Note that, by $1$-affineness, a functor $\Escr\rightarrow\Cscr$ of
    $\Oscr_S$-linear categories is fully faithful if and only if
    corresponding functor $\Ho(\Escr)\rightarrow\Ho(\Cscr)$ of triangulated
    homotopy categories is fully faithful.}
\end{definition}

\begin{example}
    Evaluation at $0$ gives an equivalence $\Filt_{[0]}\Cat_S\rightarrow\Cat_S$.
\end{example}

In the previous definition, there is no ambient $\Oscr_S$-linear category that is being filtered.

\begin{definition} \label{def:p-indexed-filt}
    A {\bf $P$-shaped filtration on an $\Oscr_S$-linear category $\Cscr$} is a
    $P$-shaped filtration $\F_\star\Cscr$ equipped with a functor
    $$\F_{\infty}\Cscr=\colim_P\F_\star\Cscr\rightarrow\Cscr$$
    such that for each $p\in P$ the induced functor $\F_p\Cscr\rightarrow\Cscr$ is fully
    faithful.
    If the functor $\F_\infty\Cscr\rightarrow\Cscr$ is moreover an equivalence, we say that the filtration is {\bf
    exhaustive}.     
    To give a precise definition of the $\infty$-category of $P$-shaped filtrations on
    $\Cscr$, we first define the lax pullback
    $$\Filt_P\Cat_S\overrightarrow{\times}_{\Cat_S}\Delta^0$$ as the pullback
    \begin{equation} \label{eq:pull}\begin{gathered}
    \xymatrix{
    \Filt_P\Cat_S\overrightarrow{\times}_{\Cat_S}\Delta^0\ar[rr]\ar[d]   & & 
    \left(\Cat_S\right)^{\Delta^1\ar[d]}\ar[d]^{(\partial_1,\partial_0)}\\
    \Filt_P\Cat_S\ar[rr]^{(\colim_P,\Cscr)}& &\Cat_S\times\Cat_S.}
        \end{gathered}
    \end{equation}
    Thus, $\Filt_P\Cat_S\overrightarrow{\times}_{\Cat_S}\Delta^0$ is the $\infty$-category
    consisting of pairs $(\F_\star\Cscr,\Cscr)$ of a $P$-shaped filtration $\F_\star\Cscr$,
    an $\Oscr_S$-linear category $\Cscr$, and a functor
    $\colim_P\F_\star\Cscr\rightarrow\Cscr$. We let
    $$\Filt_P^\Cscr\subseteq\Filt_P\Cat_S\overrightarrow{\times}_{\Cat_S}\Delta^0$$
    be the full subcategory where each induced functor $\F_p\Cscr\rightarrow\Cscr$ is fully faithful.
    We let
    \[\ExFilt_P^\Cscr\subseteq\Filt_P^\Cscr
    \] be the full subcategory of exhaustive $P$-shaped filtrations 
    on $\Cscr$.
\end{definition}

\begin{warning} Definition~\ref{def:p-indexed-filt} is usually only
    interesting if $P$ is filtered so that the mapping spaces in $\Cscr$ can be
    computed as a filtered colimit of mapping spaces in each $\F_p\Cscr$. In particular, a filtered colimit of fully
    faithful functors is fully faithful.
    But, the present notion also lets us use discrete sets such as $\{0,1\}$ as
    our indexing sets. A $\{0,1\}$-shaped filtration on an $\Oscr_S$-linear
    category $\Cscr$ is just the data of two full subcategories
    $\F_0\Cscr\subseteq\Cscr$ and $\F_1\Cscr\subseteq\Cscr$ with no imposed
    relation. This might be useful in some situations, so we will work in this
    generality.
\end{warning}

\begin{example}
    \begin{enumerate}
        \item[(a)] A sequence
            $\F_0\Cscr\subseteq\F_1\Cscr\subseteq\cdots\subseteq\F_n\Cscr$
             of $n+1$ full subcategories
            of $\Cscr$ 
            defines an $[n]$-shaped filtration of $\Cscr$. It is exhaustive if
            and only if the last inclusion is an equivalence $\F_n\Cscr\we\Cscr$.
        \item[(b)] The Beilinson filtration on $\Perfscr(\PP^n)$ with
            $\F_p\Perfscr(\PP^n)=\langle\Oscr(0),\ldots,\Oscr(p)\rangle$ gives an
            exhaustive $[n]$-shaped filtration of $\Perfscr(\PP^n)$.
        \item[(c)] Let $X$ be a qcqs scheme.
            Let $Z_0\subseteq Z_1\subseteq\cdots$ be an $\NN$-indexed sequence
            of closed subsets of $X$, each with quasi-compact complement. Then,
            $\Perfscr(\text{$X$ on $Z_\star$})$ defines an $\NN$-shaped
            filtration on $\Perfscr(X)$. It is exhaustive if and only if each
            generic point of $X$ is contained in $Z_p$ for some finite $p$.
    \end{enumerate}
\end{example}

As we will see, the following simple lemma turns out to be the secret sauce.

\begin{lemma}\label{lem:0category}
    Let $S$ be a $1$-affine algebraic stack and let $\Cscr$ be an $\Oscr_S$-linear category.
    For any poset $P$, the $\infty$-category $\Filt_P^\Cscr$ is a
    poset. In particular, $\ExFilt_P^\Cscr$ is a poset as well.
\end{lemma}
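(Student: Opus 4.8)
The plan is to show that the mapping spaces in $\Filt_P^\Cscr$ are all either empty or contractible. Since $\Filt_P^\Cscr$ sits inside the lax pullback \eqref{eq:pull}, an object is a triple $(\F_\star\Cscr', \Cscr', u)$ where $u\colon\colim_P\F_\star\Cscr'\to\Cscr'$ is a functor and each $\F_p\Cscr'\to\Cscr'$ is fully faithful. But the relevant statement is about the fiber over a \emph{fixed} $\Cscr$: after pulling back along $\Cscr\colon\Delta^0\to\Cat_S$ the ambient category $\Cscr'=\Cscr$ is rigid, so an object of $\Filt_P^\Cscr$ is really just a $P$-shaped filtration $\F_\star\Cscr$ together with a functor $u\colon\colim_P\F_\star\Cscr\to\Cscr$ whose composites $\F_p\Cscr\to\Cscr$ are fully faithful. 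A morphism is a natural transformation of $P$-diagrams compatible with the maps to $\Cscr$.

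First I would reduce to the observation that each $\F_p\Cscr\to\Cscr$ being fully faithful means $\F_p\Cscr$ is equivalent to a full subcategory of $\Cscr$, i.e. is determined by a subset of the equivalence classes of objects of $\Cscr$ that is closed under the relevant structure (here: under the $\Oscr_S$-linear stable structure; by the $1$-affineness footnote, fully faithful can be checked on homotopy categories). The key point is that a fully faithful functor $\Escr\to\Cscr$ in $\Cat_S$ is a \emph{monomorphism} in $\Cat_S$: the space $\Map_{\Cat_S}(\Dscr,\Escr)\to\Map_{\Cat_S}(\Dscr,\Cscr)$ is, for every $\Dscr$, an inclusion of path components (a functor $\Dscr\to\Cscr$ factors through a full subcategory in at most one way, and the factorization, when it exists, is unique up to a contractible space of choices). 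Concretely, I would argue: given two objects $(\F_\star\Cscr,u)$ and $(\G_\star\Cscr,v)$ of $\Filt_P^\Cscr$, a morphism between them is a map of $P$-diagrams $\alpha\colon\F_\star\Cscr\to\G_\star\Cscr$ such that $v\circ\colim\alpha\simeq u$. Since each $\G_p\Cscr\to\Cscr$ is a monomorphism and $u$ already exhibits $\F_p\Cscr$ as a full subcategory of $\Cscr$, the component $\alpha_p$ exists if and only if the subcategory $\F_p\Cscr$ is contained in $\G_p\Cscr$ (as subcategories of $\Cscr$), and in that case $\alpha_p$ is unique up to a contractible space. Taking this over all $p$ and assembling the compatibility data, the whole mapping space $\Map_{\Filt_P^\Cscr}\big((\F_\star,u),(\G_\star,v)\big)$ is either empty (if some containment fails) or contractible (a limit over $P$ and over the simplices of the nerve of a contractible spaces).

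The cleanest way to package the last step is probably categorical rather than by hand: $\Filt_P^\Cscr$ is, by construction, a full subcategory of a lax pullback; I would identify it with the full subcategory of $\Fun\big(P^\triangleright, \Cat_S\big)$ (cone on $P$, sending the cone point to $\Cscr$) spanned by diagrams that are pointwise fully faithful into $\Cscr$, and then note that $\Fun(P, (\Cat_S)_{/\Cscr}^{\mathrm{ff}})$ — where $(\Cat_S)_{/\Cscr}^{\mathrm{ff}}$ is the poset of full subcategories of $\Cscr$ ordered by inclusion — is a poset because $(\Cat_S)_{/\Cscr}^{\mathrm{ff}}$ is (full subcategories of a fixed category form a poset under inclusion), and a functor category out of any $\infty$-category into a poset is again a poset. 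The statement for $\ExFilt_P^\Cscr$ is then immediate since it is a full subcategory of a poset.

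\textbf{Main obstacle.} The one genuinely substantive point is justifying that "full subcategories of $\Cscr$, with the morphisms of inclusion, form a poset" in the $\infty$-categorical setting — equivalently, that fully faithful functors into $\Cscr$ are monomorphisms in $\Cat_S$ and that the space of such factorizations is contractible when nonempty. This is standard (it follows from the fact that $\iota\Fun^{\ex}(\Dscr,\Escr)\to\iota\Fun^{\ex}(\Dscr,\Cscr)$ is a monomorphism of spaces when $\Escr\to\Cscr$ is fully faithful, which in turn reduces to the $1$-categorical statement on homotopy categories via the $1$-affineness footnote), but it must be stated carefully, and one should be slightly careful that the lax-pullback description of $\Filt_P^\Cscr$ in Definition~\ref{def:p-indexed-filt} really does pin down $\Cscr$ on the nose after pulling back along $\Delta^0$, so that no extra automorphisms of $\Cscr$ creep into the mapping spaces.
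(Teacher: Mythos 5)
Your proposal is correct and follows essentially the same route as the paper: the paper also identifies $\Filt_P^\Cscr$ with $\Fun(P,\mathrm{Sub}^\Cscr)$, where $\mathrm{Sub}^\Cscr\subseteq(\Cat_S)_{/\Cscr}$ is the full subcategory of fully faithful inclusions, and shows $\mathrm{Sub}^\Cscr$ is a poset precisely by your key observation that a fully faithful functor into $\Cscr$ induces an inclusion of path components on mapping spaces (phrased there as a fiber sequence whose fibers are empty or contractible). The only cosmetic difference is that you invoke the slightly more general fact that functors from any $\infty$-category into a poset form a poset, and your aside about reducing to homotopy categories via $1$-affineness is unnecessary, since full faithfulness gives the monomorphism statement directly.
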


\begin{proof}
    Let $\mathrm{Sub}^\Cscr\subseteq\left(\Cat_R\right)_{/\Cscr}$ be the full
    subcategory on the fully faithful inclusions. Then,
    $$\Filt_P^{\Cscr}\we\Fun(P,\mathrm{Sub}^\Cscr).$$
    Since the $\infty$-category of functors from one poset to another forms a
    poset, it now suffices to see
    that $\mathrm{Sub}^\Cscr$ is a poset. Let $\Dscr_0$ and $\Dscr_1$ be
    two full subcategories of $\Cscr$. There is a fiber sequence
    $$\Map_{\mathrm{Sub}^\Cscr}(\Dscr_0,\Dscr_1)\rightarrow\Map_{\Cat_R}(\Dscr_0,\Dscr_1)\rightarrow\Map_{\Cat_R}(\Dscr_0,\Cscr)$$
    of spaces,
    where the left hand term is the fiber over the fixed inclusion
    $\Dscr_0\hookrightarrow\Cscr$. Since $\Dscr_1\rightarrow\Cscr$ is fully
    faithful, the right map is an inclusion of connected components. Thus, the
    fibers are either empty or contractible. Therefore, $\mathrm{Sub}^\Cscr$ is a poset. Hence,
    $\Fun(P,\mathrm{Sub}^\Cscr)$ is a poset, which is what
    we wanted to prove.
\end{proof}

\subsection{Marked filtrations}

Marked filtrations are filtrations in which we also specify an object from each
layer in the filtration.

\begin{definition}
    Let $S$ be a $1$-affine algebraic stack.
Suppose that $\F_\star\Cscr\rightarrow\Cscr$ is a filtration on an
    $\Oscr_S$-linear category $\Cscr$. A \df{marking} of $\F_\star\Cscr\rightarrow\Cscr$ is
    the choice of an object $M_p\in\F_p\Cscr$ for each $p\in P$. The pair
    $(\F_\star\Cscr\rightarrow\Cscr,M_\ast)$ will be called a {\bf marked $P$-shaped
    filtration on $\Cscr$}. A marked $P$-shaped filtration on $\Cscr$ is called {\bf exhaustive} if the underlying filtration is exhaustive.

    We will denote by $\M\Filt_P^\Cscr$ (resp. $\M\ExFilt_P^\Cscr$) the $\infty$-category of marked (resp. marked exhaustive) filtrations on $\Cscr$.
\end{definition}

In contrast to Lemma~\ref{lem:0category}, neither the $\infty$-category
$\M\ExFilt_P^\Cscr$ nor $\M\Filt_P^\Cscr$ is a poset in general. For example, there is a
natural equivalence $\M\ExFilt_{[0]}^\Cscr\rightarrow\iota\Cscr$, where
$\iota\Cscr$ is the space of objects in $\Cscr$, given by taking
the marked object.

\begin{remark}
    There are many variants one can consider, for example by marking only
    certain parts of the filtration, or by marking the quotients
    $\frac{\F_q\Cscr}{\F_p\Cscr}$ for $p\leq q$. We leave it to the reader to
    spell out the theory in these cases.
\end{remark}

\subsection{Descent for the stack of filtrations} 

Central to our results is following theorem which is essentially due to Jacob
Lurie. We include a proof, which relies heavily on~\cite{sag}, because Lurie works in a slightly different setting.

\begin{theorem}\label{thm:lurie}
    The functor 
    \[
    \ShCat\colon\Aff^{\op} \rightarrow \widehat{\Cat}_{\infty}
    \] satisfies fppf descent.
\end{theorem}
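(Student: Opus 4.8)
The plan is to reduce the statement to two pieces of input, one about descent for the symmetric monoidal $\infty$-category $\Cat_\infty^\perf$ of small idempotent complete stable $\infty$-categories, and one about descent for modules over a sheaf of $E_\infty$-objects therein. First I would observe that $R \mapsto \Perfscr(R)$ is itself an fppf sheaf valued in $\CAlg(\Cat_\infty^\perf)$; for the underlying $\infty$-category this is essentially the statement that $\Perfscr(-)$ satisfies fppf (indeed flat) descent, which is standard and can be extracted from \cite{sag} — the key point being that $\Perfscr$ is the compact objects inside the large stable $\infty$-category $\Dscr_{\qc}(-)$, and that $\Dscr_{\qc}$ satisfies descent for the flat topology on affine schemes. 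The symmetric monoidal refinement follows because the tensor product on $\Cat_\infty^\perf$ is computed levelwise in a way compatible with the descent limit.

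Next I would invoke the general principle — available in \cite{sag} in the guise of descent for module categories, or more precisely the statement that if $\mathcal{O} \colon \Cscr^{\op} \to \CAlg(\Cscr at)$ is a sheaf for a topology $\tau$ then $R \mapsto \Mod_{\mathcal{O}(R)}(\mathcal{D})$ is again a $\tau$-sheaf, for $\mathcal{D}$ a suitable ambient symmetric monoidal $\infty$-category admitting the relevant limits — applied with $\mathcal{D} = \Cat_\infty^\perf$ and $\mathcal{O}(R) = \Perfscr(R)$. Since $\ShCat(R) = \Cat_R = \Mod_{\Perfscr(R)}(\Cat_\infty^\perf)$ by definition, this would conclude the proof once we check the hypotheses. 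Concretely, for an fppf cover $R \to R'$ with Čech nerve $R^{\bullet}$, one wants
\[
\Cat_R \;\we\; \lim_{\Delta}\, \Cat_{R^{\bullet}},
\]
and the right-hand side is a limit of module $\infty$-categories over the cosimplicial $E_\infty$-object $\Perfscr(R^{\bullet})$; the comparison functor is base change along $R \to R'$, and one checks it is an equivalence by the usual Beck--Chevalley/comonadicity argument, using that $\Perfscr(R') $ is faithfully flat over $\Perfscr(R)$ in the appropriate categorical sense (it is a descendable, even faithful, algebra object).

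The main obstacle I expect is not the formal module-descent machinery but the size and existence issues flagged in Example~\ref{ex:stable} and Warning~\ref{warn:1-aff}: $\Cat_\infty^\perf$ does not admit all small limits in the naive sense, so one must be careful that the relevant totalizations exist and are computed correctly. The standard fix is to pass to the large category $\Pr^{\L}_{\st}$ (or $\Pr^{\L,\omega}_{\st}$) of presentable stable $\infty$-categories, where all limits exist and descent is cleaner, prove descent there for $R \mapsto \Mod_{\Dscr(R)}(\Pr^{\L}_{\st}) \simeq \Mod_{\Dscr(R)}$, and then transfer back to $\Cat_\infty^\perf$ via the equivalence between idempotent complete small stable $\infty$-categories and compactly generated presentable ones (taking $\Ind$ and compact objects), checking that this equivalence is compatible with the module structures and with finite limits of the relevant shape. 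The remaining verification — that fppf covers give descendable algebra maps $\Dscr(R) \to \Dscr(R')$, so that the totalization converges — is exactly where one cites the flat-descent results of \cite{sag}; I would isolate this as the technical heart and spend most of the written proof there, treating the module-category formalism and the small-versus-large translation as bookkeeping.
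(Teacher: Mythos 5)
Your overall skeleton — pass to presentable $R$-linear categories, use the flat/universal-descent results of \cite{sag} there, and then transfer back to $\Cat_\infty^\perf$ via the $\Ind$/compact-objects correspondence — is the same reduction the paper makes. But there is a genuine gap: you treat the transfer back to small categories as ``bookkeeping,'' when it is in fact the technical heart of the proof, and the step you isolate as the heart (descendability of $\Dscr(R)\to\Dscr(R')$ for fppf covers, hence descent for $R\mapsto\Mod_{\Dscr(R)}(\PrL)$) is precisely the part that is available by citation, namely \cite[Theorem D.3.6.2, Proposition D.3.3.1]{sag}. What does \emph{not} follow formally is that the full subcategory $\Cat_R\subseteq\Mod_{\Dscr(R)}(\PrL)$ (compactly generated objects, with morphisms the colimit-preserving functors that preserve compact objects) is carved out by conditions that are fppf-local. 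Concretely, given a descent datum in $\Cat_{R'}$ for an fppf cover $R\to R'$, applying $\Ind$ and gluing in the presentable setting produces some presentable $R$-linear category $\Cscr$, and you must prove (a) that $\Cscr$ is compactly generated whenever it is so after base change along $R\to R'$, and (b) that a colimit-preserving functor preserves compact objects if it does so fppf-locally (equivalently, that compactness of objects can be detected locally), since otherwise you get neither essential surjectivity nor full faithfulness of the comparison $\Cat_R\to\lim_\Delta\Cat_{R'^{\otimes_R\bullet+1}}$. Neither statement is a compatibility of $\Ind$ with module structures or finite limits; in the paper, (a) is obtained by extending Lurie's argument for compact generation from the finite \'etale plus Nisnevich case \cite[Theorem D.5.3.1(b)]{sag} to the finite flat case and then invoking the fact that Nisnevich descent together with finite flat descent implies fppf descent \cite[Tag 05WM]{stacks}, and (b) is proved by a separate sheaf-theoretic argument of the same flavor. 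Your proposal as written never engages with these locality statements, so it does not close.

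Two smaller points. First, the ``general principle'' that modules over a sheaf of $E_\infty$-objects in an ambient symmetric monoidal $\infty$-category again form a sheaf is not a formal fact; in the presentable setting it is exactly Lurie's theorem cited above, and in $\Cat_\infty^\perf$ directly it is not available without the compact-generation analysis (the Beck--Chevalley/comonadicity argument you sketch has no evident right adjoints at the small level). Second, the size worry is slightly misplaced: $\Cat_\infty^\perf$ does admit small limits, so the totalization over $\Delta$ exists; the existence issues in Example~\ref{ex:stable} and Warning~\ref{warn:1-aff} concern large (Kan-extension) limits, not the \v{C}ech totalizations relevant here. The real question is whether the natural map from $\Cat_R$ to that totalization is an equivalence, and that is exactly where (a) and (b) enter.
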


\begin{proof} To begin with, for any commutative ring $R$, we have inclusions of subcategories
\[
\Cat_R \subseteq \mathrm{LinCat}^{\mathrm{cg}}_R \subseteq \mathrm{LinCat}^{\mathrm{st}}_R
\]
where $\mathrm{LinCat}^{\mathrm{st}}_R= \Mod_{\Modscr(R)}(\PrL)$ \cite[Variant
    D.1.5.1]{sag} and  $\mathrm{LinCat}^{\mathrm{cg}}_R$ is the full
    subcategory of those objects which are furthermore compactly generated \cite[5.5.7]{htt}. To explain
    the first inclusion, note that the functor of taking ind-objects
\[
\Ind: \Cat_R \rightarrow  \mathrm{LinCat}^{\mathrm{st}}_R
\]
factors through $\mathrm{LinCat}^{\mathrm{cg}}_R$ and identifies as the subcategory of $\mathrm{LinCat}^{\mathrm{st}}_R$ where the objects are compactly generated but the functors are those which additionally preserve compact objects (this holds by the $R$-linear version of \cite[Proposition 5.5.7.10]{htt}).

Now, \cite[Theorem D.3.6.2]{sag} implies that the functor $R \mapsto \mathrm{LinCat}^{\mathrm{st}}_R$ is an fppf sheaf. Indeed, \emph{loc. cit.} proves that it is a sheaf with respect to the universal descent topology which is finer than the fppf topology by \cite[Proposition D.3.3.1]{sag} (the cardinality assumption is trivially satisfied by morphisms of finite presentation). It then suffices to prove that
\begin{enumerate}
\item[\rm (a)] $R \mapsto \mathrm{LinCat}^{\mathrm{cg}}_R$ is an fppf sheaf,
\item[\rm (b)] for any ring $R$ and a colimit-preserving functor $F: \Cscr \rightarrow \Dscr$ in $\mathrm{LinCat}^{\mathrm{cg}}_R$ if $R \rightarrow S$ is a faithfully flat morphism of finite presentation, then if $F \otimes_{R} S$ preserves compact objects, $F$ does as well.
\end{enumerate} 
Indeed, we claim that the proof of \cite[Theorem D.5.3.1.b]{sag} proves (a).
    Since $R \mapsto \mathrm{LinCat}^{\mathrm{st}}_R$ is an fppf sheaf, it
    suffices to prove that the property of being compactly generated is local for the fppf topology. To this end, for each commutative ring $R$ and $\Cscr \in \mathrm{LinCat}^{\mathrm{cg}}_R$, define the presheaf (on $\Aff_R$)
\[
\chi_M(R') =\begin{cases}
* & \text{if $\Cscr \otimes_{S}R'$ is compactly generated} \\
\emptyset & \text{otherwise}.
\end{cases}
\]
    The first half of the proof of~\cite[Theorem~D.5.3.1(b)]{sag} shows that $\chi_M$ is an Nisnevich sheaf on the small Nisnevich site\footnote{By the small Nisnevich site we mean the category of \'etale $R$-schemes $\Et_R$ equipped with the Nisnevich topology.} for each $R'$, hence it is a Nisnevich sheaf on $\Aff_R$\footnote{One way to see this is that to be a Nisnevich sheaf, it suffices to check an excision condition involving objects on the small Nisnevich site; see \cite[Appendix A]{bachmann-hoyois} for the most general statement.}. 

Now (beginning at the end of page 2153), Lurie claims that it is a sheaf for the
finite \'etale topology. However the argument proves that it is in fact a
sheaf for the finite flat topology since, in the notation of the proof, one
only needs that $B$ is finitely generated and projective as an $A$-module.
But now, finite flat descent and Nisnevich descent implies fppf descent by
\cite[Tag 05WM]{stacks} (see also~\cite[Corollaire~17.16.2]{ega44}).

Now, we prove (b). Suppose that $M \in \Cscr$. Define the presheaf (on $\Aff_S$)
\[
\chi_M(R') =\begin{cases}
* & \text{if $F(M) \otimes_{S}R'$ is compact and} \\
\emptyset & \text{otherwise}.
\end{cases}
\]
    This makes sense as $\Dscr\rightarrow\Dscr\otimes_{R'}R''$ preserves
    compact objects for $R'\rightarrow R''$ a map of commutative $R$-algebras.
Arguing as above, one proves that $\chi_M$ is an fppf sheaf. Now, the functor
    $\Cscr \rightarrow \Cscr \otimes_R S$ preserves compact objects, hence the
    composite $\Cscr \rightarrow \Dscr \otimes_R S$ preserves compact objects,
    whence $\chi_M(S) \simeq *$ and thus $\chi_M(R) = *$ by fppf descent. This
    proves (b) and hence the theorem.
\end{proof}

From this, we will prove that various prestacks classifying filtrations are
actually fppf stacks. Let us fix a quasicompact quasiseparated $1$-affine
algebraic stack $S$ and an $\Oscr_S$-linear category $\Cscr$. We consider the prestacks
$$\Aff_S^\op\rightarrow\widehat{\Cat}_\infty$$ given by
\begin{enumerate}
    \item[(a)] $\mathbf{Filt}_P\colon \Spec R \mapsto \Filt_P(\Cat_R)$,
    \item[(b)] $\mathbf{Filt}^{\Cscr}_P\colon \Spec R \mapsto \Filt^{\Cscr\otimes_{\Perfscr(S)}\Perfscr(R)}_P(\Cat_R)$,
    \item[(c)] $\mathbf{ExFilt}_P^{\Cscr}\colon R \mapsto \Ex\Filt_P^{\Cscr\otimes_{\Perfscr(S)}\Perfscr(R)}(\Cat_R)$,
\end{enumerate}
as well as their marked variants $\mathbf{MFilt}_P, \mathbf{MFilt}^{\Cscr}_P$ and $\mathbf{MExFilt}_P^\Cscr$.

\begin{theorem} \label{thm:desc} Let $P$ be a poset.
    \begin{enumerate}
        \item[{\rm (i)}] The prestack $\mathbf{Filt}_P$ satisfies fppf
            descent. Consequently, $\mathbf{Filt}^{\Cscr}_P$ and $\mathbf{ExFilt}_P^{\Cscr}$ also satisfy fppf descent.
        \item[{\rm (ii)}] The prestack $\mathbf{MFilt}_P$ satisfies fppf descent. Consequently, $\mathbf{MFilt}^{\Cscr}_P$ and $\mathbf{MExFilt}_P^\Cscr$ also satisfy fppf descent.
    \end{enumerate}
\end{theorem}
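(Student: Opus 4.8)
The plan is to deduce everything from Theorem~\ref{thm:lurie}, using that limits of $\infty$-categories commute with the various full-subcategory and (lax) pullback constructions defining the prestacks in question. I will treat (i) first and then (ii), since the marked variants only add a space-valued section on top.

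\textbf{Proof of (i).} First I would observe that $\mathbf{Filt}_P$ is built from $\ShCat$ by a functor-category construction followed by passage to a full subcategory cut out by a pointwise condition. Concretely, $\Filt_P(\Cat_R)$ is the full subcategory of $\Fun(P,\Cat_R)$ spanned by those diagrams sending every arrow of $P$ to a fully faithful functor. Since $\ShCat$ is an fppf sheaf by Theorem~\ref{thm:lurie} and $\Fun(P,-)$ preserves limits of $\infty$-categories, the prestack $\Spec R\mapsto\Fun(P,\Cat_R)$ is again an fppf sheaf. To cut down to $\mathbf{Filt}_P$, the key point is that full faithfulness of a functor $\Escr\to\Cscr$ of $\Oscr_S$-linear categories can be checked after an fppf cover: by $1$-affineness full faithfulness is detected on homotopy categories and on mapping spectra, and for a faithfully flat $R\to S$ the functor $\ShMap_{\Escr}(x,y)\to\ShMap_{\Escr\otimes_R S}(x_S,y_S)$ together with $R\to S$ being faithfully flat shows that $\Escr\to\Cscr$ is fully faithful if and only if $\Escr\otimes_R S\to\Cscr\otimes_R S$ is. (Equivalently, one invokes that $\Spec R\mapsto\mathrm{Sub}^{\Cscr\otimes_{\Perfscr(S)}\Perfscr(R)}$, as in the proof of Lemma~\ref{lem:0category}, is an fppf sheaf because it is the full subcategory of $(\Cat_R)_{/\Cscr_R}$ on fully faithful inclusions, and the slice construction and this full-subcategory condition are both compatible with the limit describing descent.) Hence the subfunctor $\mathbf{Filt}_P\subseteq\Fun(P,\ShCat)$ is itself an fppf sheaf: a descent datum for the ambient functor sheaf whose restriction to a cover lands in $\mathbf{Filt}_P$ must already land there, by the cover-local nature of the fully-faithful condition. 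For $\mathbf{Filt}^\Cscr_P$ and $\mathbf{ExFilt}^\Cscr_P$ the same mechanism applies: $\Filt^{\Cscr_R}_P$ is the full subcategory of the lax pullback $\Filt_P\Cat_R\overrightarrow{\times}_{\Cat_R}\Delta^0$ of~\eqref{eq:pull} on objects where each $\F_p\Cscr\to\Cscr_R$ is fully faithful, and a lax pullback is a finite limit built from the (already sheafy) functors $\mathbf{Filt}_P$, $\Cscr\otimes_{\Perfscr(S)}\Perfscr(-)$ (a sheaf since $\Perfscr$ is and $\otimes$ preserves the relevant limits), and the arrow-category functor $(\Cat_{(-)})^{\Delta^1}$; exhaustiveness is the further cover-local condition that $\colim_P\F_\star\Cscr\to\Cscr_R$ be an equivalence, which descends because equivalences of $\Oscr_S$-linear categories can be checked fppf-locally (again using Theorem~\ref{thm:lurie} and $1$-affineness).

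\textbf{Proof of (ii).} The marked variants add, on top of a $P$-shaped filtration $\F_\star\Cscr\to\Cscr$, a choice of object $M_p\in\iota\F_p\Cscr$ for each $p\in P$. I would present $\mathbf{MFilt}_P$ as the total space of a cartesian fibration (or simply as an appropriate pullback) over $\mathbf{Filt}_P$ whose fiber over $\F_\star\Cscr$ is $\prod_{p\in P}\iota\F_p\Cscr$. Since $\ShCat$ is an fppf sheaf and the space-of-objects functor $\iota(-)$ preserves limits, the assignment sending an $R$-point $(\F_\star\Cscr)$ to $\prod_{p\in P}\iota(\F_p\Cscr)$ assembles, via the Grothendieck construction applied to the sheaf $\mathbf{Filt}_P$ and the sheaf of spaces $\prod_p\iota\F_p(-)$, into an fppf sheaf $\mathbf{MFilt}_P$. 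Descent then follows from: fppf descent for $\mathbf{Filt}_P$ (part (i)); fppf descent for the mapping/object spaces, i.e. that $\Spec R\mapsto\iota(\Escr\otimes_R S\text{-family})$ is a sheaf of spaces, which is immediate from Theorem~\ref{thm:lurie} since $\iota$ is a right adjoint; and the fact that the (un)straightening equivalence turns a sheaf of $\infty$-categories together with a sheaf of spaces over it into a sheaf of $\infty$-categories. The claims for $\mathbf{MFilt}^\Cscr_P$ and $\mathbf{MExFilt}^\Cscr_P$ follow by combining this with the sheaf property of $\mathbf{Filt}^\Cscr_P$ and $\mathbf{ExFilt}^\Cscr_P$ from (i), using that the marking condition $M_p\in\F_p\Cscr$ is pulled back along the already-sheafy forgetful map.

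\textbf{Main obstacle.} The routine part is that $\Fun(P,-)$, slices, lax pullbacks, and $\iota(-)$ all preserve the cosimplicial limits that encode fppf descent. The one point that genuinely requires care—and where I expect to spend effort—is checking that the \emph{full-subcategory conditions} (fully faithful on each arrow; each $\F_p\Cscr\to\Cscr$ fully faithful; exhaustiveness) are local for the fppf topology, so that a descent datum for the ambient sheaf automatically lands in the subfunctor. This is precisely the content encapsulated in Lemma~\ref{lem:0category}'s analysis of $\mathrm{Sub}^\Cscr$ as a poset together with the $1$-affineness footnote: full faithfulness and equivalence of $\Oscr_S$-linear categories are detected on homotopy categories, and the needed faithful-flatness argument for mapping spectra is exactly the kind of statement already used in the proof of Theorem~\ref{thm:lurie}. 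Once that locality is in hand, there is no higher cocycle data to glue—this is the phenomenon the introduction advertises—and the theorem drops out formally.
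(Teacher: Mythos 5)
Your proposal is correct and follows essentially the same route as the paper: deduce sheafiness of $\Fun(P,\ShCat)$ and $\Fun(\Delta^1,\ShCat)$ from Theorem~\ref{thm:lurie}, observe that the fully-faithfulness and exhaustiveness conditions are fppf-local (mapping spectra and objects of $\Cat_R$ are computed flat-locally), treat $\ShFilt_P^{\Cscr}$ via the (lax) pullback of stacks, and handle markings by a pullback against pointed categories (your $\prod_p\iota\F_p\Cscr$-fiber description is just the straightened form of the paper's pullback along $\prod_P\mathbf{Cat}_*\rightarrow\prod_P\mathbf{Cat}$). No gaps worth noting.
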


\begin{proof}
    To see part (i), note that the prestack $\Spec R\mapsto\Fun(P,\mathbf{Cat}(R))$
    is an fppf stack since it is the mapping prestack $\Fun(P,\mathbf{Cat})$,
    and because $\mathbf{Cat}$ is an fppf stack by Theorem~\ref{thm:lurie}.
    To prove that $\mathbf{Filt}_P\subseteq\Fun(P,\mathbf{Cat})$ is an fppf stack, it
    is enough to show that if $\F_\star\Cscr\colon P\rightarrow\Cat_R$ is a diagram which
    becomes a diagram of fully faithful maps after base changing along a
    faithfully flat map $S\rightarrow T$, then each
    $\F_p\Cscr\rightarrow\F_q\Cscr$ is already fully faithful. But, this follows
    from the fact that mapping spaces themselves may be calculated flat
    locally. (See for example~\cite[Corollary~6.11]{dag11}.) Similarly,
    the prestack $\ShCat^{\Delta^1}\colon\Spec R\mapsto\Cat_R^{\Delta^1}$ is
    $\Fun(\Delta^1,\ShCat)$ and so defines an fppf stack.
    Since stacks are
    stable under pullbacks in prestacks, we get that
    $\ShFilt_P\times_{\ShCat\times\ShCat}\ShCat^{\Delta^1}\we\ShFilt_p\overrightarrow{\times}_{\ShCat}\Delta^0$
    is an fppf stack. Now,
    $\ShFilt_P^\Cscr\subseteq\ShFilt_P\times_{\ShCat\times\ShCat}\ShCat^{\Delta^1}$
    is defined by the condition that each $\F_p\Cscr\rightarrow\Cscr$ is fully
    faithful. This is fppf-local, so the fact that $\ShFilt_P^\Cscr$ is an fppf
    stack follows. The fact that $\mathbf{ExFilt}^{\Cscr}_P$ is an
    fppf stack again follows from the fact that objects in $\Cat_S$ satisfy
    fppf descent by\cite[Corollary
    6.11]{dag11}. Specifically, if $\colim_P\F_p\Cscr\rightarrow\Cscr$ is
    fppf locally an equivalence, then it is an equivalence. Indeed, set
    $\Dscr=\colim_P\F_p\Cscr$ and fix a faithfully flat map $R\rightarrow
    S$. Then,
    $\Dscr\we\lim_\Delta\Dscr\otimes_{\Perfscr(R)}\Perfscr(S^{\otimes_R\bullet+1})$
    and
    $\Cscr\we\lim_\Delta\Cscr\otimes_{\Perfscr(R)}\Perfscr(S^{\otimes_R\bullet+1})$.
    The natural transformation
    $\Dscr\otimes_{\Perfscr(R)}\Perfscr(S^{\otimes_R\bullet+1})\rightarrow\Cscr\otimes_{\Perfscr(R)}\Perfscr(S^{\otimes_R\bullet+1})$
    is a degree-wise equivalence by hypothesis, thus it is an equivalence in
    the limit.

    The prestack $\mathbf{MFilt}_P$ is computed as  pullback of prestacks
    \[
\xymatrix{
   \mathbf{MFilt}_P \ar[d] \ar[r] &  \mathbf{Filt}_P \ar[d] \\
\prod_P \mathbf{Cat}_* \ar[r]^{\prod u} & \prod_P \mathbf{Cat}.
}
\]
    Here, the right vertical arrow is induced by the inclusion of the vertices
    of $P$ into $P$ and the bottom horizontal arrow forgets the base point.
    Since stacks are closed under pullbacks and products in prestacks, we see
    that $\mathbf{MFilt}_P$ is an fppf stack. The fact that
    $\mathbf{MFilt}^{\Cscr}_P$ and $\mathbf{MExFilt}_P^\Cscr$ are fppf
    stacks follow by the same argument as for their unmarked counterparts.
\end{proof}

\begin{remark} \label{rem:shv} For any of the prestacks $\mathbf{F}$ appearing
    in Theorem~\ref{thm:desc}, we obtain presheaves by taking maximal
    subgroupoids. We will decorate the resulting presheaves by
    $\iota\mathbf{F}$. Then Theorem~\ref{thm:desc} tells us that $\iota\mathbf{F}$ are fppf sheaves since the formation of maximal subgroupoids preserves limits.
\end{remark}

By construction, there is canonical morphism of prestacks
$u\colon\mathbf{Filt}_P \rightarrow \mathbf{Cat}$  given by taking the colimit.

\begin{proposition} \label{prop:fib-disc} The fibers of
    $u\colon\ShFilt_P\rightarrow\ShCat$ are posets.
\end{proposition}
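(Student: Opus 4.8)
The plan is to compute the fibers of $u$ explicitly and recognize them, up to equivalence, as $\infty$-categories of exhaustive filtrations on a fixed linear category, to which Lemma~\ref{lem:0category} applies directly. Since $u\colon\ShFilt_P\to\ShCat$ is a morphism of prestacks, its fiber over a section $\Cscr\in\Cat_S$ is computed sectionwise: after restriction along a map $\Spec R\to S$ it becomes the homotopy fiber of $\colim_P\colon\Filt_P(\Cat_R)\to\Cat_R$ over $\Dscr:=\Cscr\otimes_{\Perfscr(S)}\Perfscr(R)$, so it suffices to fix a commutative ring $R$ and $\Dscr\in\Cat_R$ and to show that this homotopy fiber is a poset. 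By construction, an object of the fiber is a functor $\F_\star\colon P\to\Cat_R$ with fully faithful transition functors equipped with an equivalence $\colim_P\F_\star\we\Dscr$; tautologically, the fiber is the full subcategory of the lax pullback $\Filt_P\Cat_R\overrightarrow{\times}_{\Cat_R}\Delta^0$ (formed with respect to $\Dscr$) on those pairs $(\F_\star,\ \colim_P\F_\star\to\Dscr)$ whose structure map is an equivalence.

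Next I would argue that this fiber coincides with $\ExFilt_P^{\Dscr}(\Cat_R)$. By Definition~\ref{def:p-indexed-filt}, the latter is cut out inside the fiber by the extra requirement that each composite $\F_p\to\colim_P\F_\star\we\Dscr$ be fully faithful, so the only thing to check is that this requirement is automatic, i.e.\ that for any $P$-shaped filtration $\F_\star$ the canonical functors $\F_p\to\colim_P\F_\star$ are fully faithful. When $P$ is filtered --- which covers $[n]$, $[m]\times[n]$, $\NN$, $\ZZ$, and the posets of finite subsets of a set, hence every case arising in this paper --- this is the observation recorded after Definition~\ref{def:p-indexed-filt}: the mapping space in $\colim_P\F_\star$ between two objects of $\F_p$ is the filtered colimit over $q\geq p$ of the mapping spaces in $\F_q$, and full faithfulness of the transition functors makes this colimit constant. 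In general one decomposes $\colim_P$ into filtered colimits together with iterated pushouts and finite coproducts and checks, by direct inspection of the morphisms created in the colimit, that the canonical legs into such pushouts and coproducts of fully faithful functors remain fully faithful. This step --- that admissibility of the canonical inclusions into the colimit comes for free --- is the crux; everything else is formal.

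Granting this identification, Lemma~\ref{lem:0category}, applied with base scheme $\Spec R$ and linear category $\Dscr$, shows that $\ExFilt_P^{\Dscr}(\Cat_R)$ is a poset. Therefore every homotopy fiber of $u\colon\ShFilt_P\to\ShCat$ is a poset, as claimed.
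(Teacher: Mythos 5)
Your proof follows the same route as the paper's own (one-line) proof: identify the fiber of $u$ over $\Cscr$ with $\ExFilt_P^{\Cscr}$ and invoke Lemma~\ref{lem:0category}. In fact you are more explicit than the paper, which simply asserts this identification; your treatment of the filtered case is precisely the observation recorded in the warning after Definition~\ref{def:p-indexed-filt}, and it covers every poset actually used in the paper. The one soft spot is your last step for non-filtered $P$: ``direct inspection of the morphisms created in the colimit'' is not actually available for pushouts in $\Cat_\infty^\perf$, since mapping spectra in such a pushout admit no elementary zigzag description; to see that the legs of a pushout along fully faithful functors stay fully faithful one should instead pass to $\Ind$-categories, compute the pushout as a pullback along the right adjoints of the inclusions, and check that the unit of the resulting adjunction is an equivalence, and one would further need to explain how an arbitrary poset-indexed colimit reduces to pushouts, coproducts and filtered colimits while keeping the relevant structure maps fully faithful (the naive coproduct--coequalizer decomposition does not do this). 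That said, this is exactly the point the paper's proof leaves implicit when it declares the fiber to be $\ExFilt_P^{\Cscr}$, so your proposal is, if anything, more careful than the original argument.
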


\begin{proof} 
    This is an immediate consequence of Lemma~\ref{lem:0category} since the fiber
    over $\Cscr$ is precisely $\ShExFilt_P^\Cscr$.
\end{proof}

\begin{corollary}\label{cor:0-tr} The sheaves of spaces
    $\iota\mathbf{Filt}^{\Cscr}_P$ and $\iota\mathbf{ExFilt}^{\Cscr}_P$ are $0$-truncated.
\end{corollary}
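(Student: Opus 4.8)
The plan is to deduce the corollary sectionwise from Lemma~\ref{lem:0category}. Recall first that $0$-truncatedness of a sheaf of spaces $F$ on $\Aff_S$ can be checked on sections over representables: since the representable sheaves generate $\Shv(\Aff_S)$ under colimits and $\Map_{\Shv(\Aff_S)}(\Spec R,F)\simeq F(\Spec R)$ when $F$ is a sheaf, the sheaf $F$ is $0$-truncated if and only if $F(\Spec R)$ is a discrete space for every affine $\Spec R\to S$. By Theorem~\ref{thm:desc} and Remark~\ref{rem:shv}, the presheaves $\iota\mathbf{Filt}^{\Cscr}_P$ and $\iota\mathbf{ExFilt}^{\Cscr}_P$ are fppf sheaves, so it suffices to establish the pointwise statement.

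Next I would fix an affine $\Spec R\to S$. As an affine scheme, $\Spec R$ is quasicompact and quasiseparated, hence $1$-affine by Warning~\ref{warn:1-aff}(a), so that $\Cat_{\Spec R}\simeq\Cat_R$; moreover $\Cscr\otimes_{\Perfscr(S)}\Perfscr(R)$ is an object of $\Cat_R$, i.e., an $\Oscr_{\Spec R}$-linear category. Applying Lemma~\ref{lem:0category} to the $1$-affine stack $\Spec R$ and the linear category $\Cscr\otimes_{\Perfscr(S)}\Perfscr(R)$ shows that the $\infty$-categories $\Filt_P^{\Cscr\otimes_{\Perfscr(S)}\Perfscr(R)}(\Cat_R)$ and $\Ex\Filt_P^{\Cscr\otimes_{\Perfscr(S)}\Perfscr(R)}(\Cat_R)$ are posets.

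Finally I would observe that the maximal subgroupoid $\iota\Dscr$ of any poset $\Dscr$ is a discrete space. Indeed, for objects $x,y\in\Dscr$ the mapping space $\Map_\Dscr(x,y)$ is empty or contractible, and if there exist morphisms $x\to y$ and $y\to x$ then antisymmetry of the partial order forces $x=y$; hence the only equivalences in $\Dscr$ are the identities, so $\iota\Dscr\simeq\pi_0\Dscr$ is discrete. Taking $\Dscr$ to be each of the two posets from the previous paragraph shows that $\iota\mathbf{Filt}^{\Cscr}_P(\Spec R)$ and $\iota\mathbf{ExFilt}^{\Cscr}_P(\Spec R)$ are discrete for every $\Spec R\to S$, which by the first paragraph gives that $\iota\mathbf{Filt}^{\Cscr}_P$ and $\iota\mathbf{ExFilt}^{\Cscr}_P$ are $0$-truncated sheaves. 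There is no real obstacle here: the corollary is formal once Lemma~\ref{lem:0category} and Theorem~\ref{thm:desc} are in hand, the only points requiring a moment's care being that $\Spec R$ is $1$-affine, so that Lemma~\ref{lem:0category} applies verbatim, and that truncatedness of a sheaf is detected on sections over representables.
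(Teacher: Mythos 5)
Your proposal is correct and is essentially the argument the paper intends: Corollary~\ref{cor:0-tr} is deduced sectionwise from Lemma~\ref{lem:0category} (via Theorem~\ref{thm:desc} and Remark~\ref{rem:shv} for sheafiness), since the maximal subgroupoid of a poset is a set and truncatedness of a sheaf is detected on sections over affines. Only a cosmetic remark: rather than invoking antisymmetry (which is model-dependent, as isomorphic objects need only be equivalent), it is cleaner to say that in a poset all automorphism spaces are contractible, so $\iota\Dscr$ is $0$-truncated; this does not affect the validity of your argument.
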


In other words, these are sheaves of sets.

\subsection{Filtrations on quotient stacks}\label{sec:filquo}

As an application of our methods, we will prove the following result which is a
version of a theorem of Elagin in \cite{elagin}.

\begin{theorem} \label{thm:main2-g} Let $P$ be a poset, $S$ a
    qcqs scheme. Let $G$ be a flat affine algebraic $S$-group scheme of finite presentation
    and let $X$ be a qcqs $S$-scheme with an action of $G$.
    Let $\F_{\star}\Perfscr(X) \rightarrow \Perfscr(X)$ be a
    $P$-filtration. If $G$ preserves the filtration on $\Perfscr(X)$, then
    there is an induced filtration
    $\F_{\star}\Perfscr([X/G]) \rightarrow \Perfscr([X/G])$.
\end{theorem}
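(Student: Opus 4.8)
The plan is to realize $\Perfscr([X/G])$ as a limit over the simplicial bar construction of the action groupoid and then to transport the filtration along this limit. Concretely, write $X_\bullet$ for the simplicial scheme with $X_n = G^{\times_S n}\times_S X$ (the \v{C}ech nerve of the atlas $X\rightarrow[X/G]$), so that, because $[X/G]$ is $1$-affine and qcqs with the required diagonal hypotheses (Warning~\ref{warn:1-aff}(c)), we have $\Perfscr([X/G])\we\lim_{\Delta}\Perfscr(X_\bullet)$ in $\Cat_\infty^\perf$. The hypothesis that $G$ preserves the filtration means precisely that the two pullback functors $\Perfscr(X)\rightarrow\Perfscr(G\times_S X)$ — equivalently, all the face maps in the cosimplicial diagram $\Perfscr(X_\bullet)$ — carry $\F_p\Perfscr(X)$ into $\F_p\Perfscr(G^{\times_S\bullet}\times_S X)$; more precisely, unwinding via flat base change, $\F_\star\Perfscr(X_n)$ should be defined as $\F_\star\Perfscr(X)\otimes_{\Perfscr(S)}\Perfscr(G^{\times_S n}\times_S S)$ and the face maps are filtered by functoriality of the action. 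Thus one gets a cosimplicial object in $\Filt_P\Cat$, i.e. an object of $\Filt_P\Cat$ valued simplicially, and taking the limit over $\Delta$ produces the desired $\F_\star\Perfscr([X/G])$.

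The key steps, in order, are: (1) set up the \v{C}ech nerve $X_\bullet\rightarrow[X/G]$ and record the equivalence $\Perfscr([X/G])\we\lim_\Delta\Perfscr(X_\bullet)$, being careful that this limit exists and is computed in $\Cat_\infty^\perf$ — this is exactly the situation discussed in Example~\ref{ex:stable}(2); (2) observe that $\mathbf{Filt}_P$ is an fppf stack by Theorem~\ref{thm:desc}(i), so that the descent datum assembling the $\F_\star\Perfscr(X_n)$ along the face maps (which is what "$G$ preserves the filtration" provides, by flat base change along $G^{\times_S n}\times_S X\rightarrow X$) glues to a well-defined object $\F_\star\Perfscr([X/G])\in\Filt_P\Cat_{[X/G]}$; (3) check that $\colim_P\F_\star\Perfscr([X/G])\rightarrow\Perfscr([X/G])$ has the required fully faithful layers — but this is automatic, since $\mathbf{Filt}^\Cscr_P$ is a stack (Theorem~\ref{thm:desc}(i)) and each layer $\F_p\Perfscr(X_n)\rightarrow\Perfscr(X_n)$ is fully faithful, and full faithfulness can be checked after the faithfully flat base change $X\rightarrow[X/G]$ using~\cite[Corollary~6.11]{dag11}; (4) note that, since $\mathbf{ExFilt}^\Cscr_P$ is also a stack, exhaustiveness descends too, so that if the original filtration on $\Perfscr(X)$ is exhaustive then so is the induced one on $\Perfscr([X/G])$.

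The main obstacle is step (2): one must check that "$G$ preserves the filtration on $\Perfscr(X)$" genuinely assembles into a \emph{simplicial} descent datum in $\mathbf{Filt}_P$ and not merely a compatibility of the two outer face maps. This is where the higher-categorical framework pays off: a priori one would need a hierarchy of higher coherences (the "higher cocycle conditions" mentioned in the introduction), but because $\mathbf{Filt}_P\rightarrow\mathbf{Cat}$ has discrete — indeed posetal — fibers over a fixed $\Cscr$ (Proposition~\ref{prop:fib-disc} and Corollary~\ref{cor:0-tr}), the space of ways to promote the filtration on $\Perfscr(X)$, together with the already-given gluing data on $\Perfscr(X_\bullet)$ for the underlying category, to a gluing datum in $\mathbf{Filt}_P$ is either empty or contractible. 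So the only thing to verify is a \emph{condition}, namely that the face maps are filtered — which is exactly the hypothesis — and then descent along the fppf cover $X\rightarrow[X/G]$ for the stack $\mathbf{Filt}_P$ does the rest. I would spell this out by invoking Remark~\ref{rem:shv} so that one is reduced to a statement about sheaves of sets, making the ``no higher coherences needed'' point completely rigorous.
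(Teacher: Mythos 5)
Your argument is correct, and it rests on the same two pillars as the paper's proof --- fppf descent for $\mathbf{Filt}_P$ (Theorem~\ref{thm:desc}) and the discreteness of its fibers over $\mathbf{Cat}$ (Lemma~\ref{lem:0category}, Proposition~\ref{prop:fib-disc}, Corollary~\ref{cor:0-tr}) --- but the route is genuinely different. You descend directly along the atlas $X\rightarrow[X/G]$: identify $\Perfscr([X/G])$ with $\lim_\Delta\Perfscr(G^{\times_S\bullet}\times_S X)$ and use the fact that the fibers of $\iota\mathbf{Filt}_P\rightarrow\iota\mathbf{Cat}$ are sheaves of sets to collapse the totalization of the fibers to an equalizer, so that the only thing left to check is the level-one cocycle condition (the action map carries $\F_p\Perfscr(X)$ into the pullback of $\F_p$), which is exactly the hypothesis. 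The paper instead passes to the classifying stack: the $G$-action is encoded as a map $\ShB G\rightarrow\mathbf{Cat}$, and Lemma~\ref{prop:g-lift} uses covering space theory in the fppf $\infty$-topos (again via Corollary~\ref{cor:0-tr}) to reduce the existence of a lift $\ShB G\rightarrow\mathbf{Filt}_P$ to a factorization of $G\rightarrow\pi_0\ShAut_{\Perfscr(X)}$ through the subsheaf of filtration-preserving automorphisms; the filtration on $\Perfscr([X/G])$ is then obtained by taking homotopy $G$-fixed points. Your version makes the cocycle condition explicit, is closer in spirit to Theorem~\ref{thm:main-disc}, and gives descent of exhaustiveness along the way; the paper's version isolates the hypothesis as a purely $1$-categorical statement about sheaves of groups, which is the form actually exploited in the examples (e.g.\ Lemma~\ref{lemma:pn} and Proposition~\ref{prop:main}(e)). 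The only points you should spell out are the identification $\Perfscr([X/G])\we\lim_\Delta\Perfscr(X_\bullet)$, which needs the quasi-affine-diagonal discussion of Example~\ref{ex:stable}(2) and is used just as implicitly by the paper in the guise of $\Perfscr(X)^{hG}\we\Perfscr([X/G])$, and the K\"unneth identification $\Perfscr(G^{\times_S n}\times_S X)\we\Perfscr(G^{\times_S n})\otimes_{\Perfscr(S)}\Perfscr(X)$ underlying your definition of $\F_\star\Perfscr(X_n)$; neither affects the validity of the argument, and also note that $1$-affineness of $[X/G]$ is not what gives the limit formula, only the interpretation of the result as a module category.
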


\begin{example}\label{ex:p1gm}
    Let $S$ be a qcqs scheme and let $\mathbb{G}_{m,S}$ act on
    $\mathbb{P}_S^1$ with some weight; we will suppress the base $S$ for this
    discussion. Consider the $[1]$-shaped Beilinson filtration defined on
    $\Perfscr(\PP^1)$ via 
    \[
    0 \mapsto \F_0\Perfscr(\PP^1)=\langle\Oscr\rangle \qquad 1 \mapsto \F_1\Perfscr(\PP^1)=\langle\Oscr, \Oscr(1)\rangle;
    \]
     this filtration will be discussed more extensively in
     Section~\ref{sect:beilber}. Since any automorphism of $\mathbb{P}^1$
     preserves $\Oscr(1)$, Theorem~\ref{thm:main2-g} implies that there is a
     $[1]$-shaped filtration on perfect complexes over the stack
     $[\mathbb{P}^1/\mathbb{G}_{m}]$ (concretely, $\mathbb{G}_m$-equivariant
     complexes on $\mathbb{P}^1$) which is compatible with the quotient map
     $p\colon\mathbb{P}^1 \rightarrow [\mathbb{P}^1/\mathbb{G}_{m}]$ in the sense
     that the filtration $\F_i\Perfscr([\mathbb{P}^1/\mathbb{G}_{m}])$ pulls
     back under $p^*$ to $\F_i\Perfscr(\PP^1)$. Combined with
     Corollary~\ref{cor:main-sod-version} we see that the semiorthogonal
     decomposition on $\Perfscr(\mathbb{P}^1)$ constructed by Beilinson
     induces a semiorthogonal decomposition of perfect complexes on the quotient
     stack. Specifically, the graded pieces of the filtration above are each
     given by $$\mathrm{gr}_i\Perfscr([\PP^1/\Gm])\we\Perfscr(\ShB\Gm),$$
     which is the derived category of perfect graded complexes, for $i=0,1$.
\end{example}

\newcommand{\U}{\mathrm{U}}

\begin{remark}
    Recall that a pure complex Hodge structure of weight $i$ is a graded finite
    dimensional $\CC$-vector space.\footnote{Indeed a pure complex Hodge
    structure is a bigraded vector space $V^{*,*}$ and its weight $i$ part is the subspace given by
    those $V^{p,q}$ such that $p+q =i$. Up to reindexing, a pure weight $i$
    complex Hodge structure is thus a graded finite dimensional $\CC$-vector
    space. See~\cite[Section~2.1]{deligne-hodge2}.} If we specialize Example~\ref{ex:p1gm} to the case where
    $S=\Spec\CC$, then
    $\mathrm{gr}_i\Perfscr([\PP^1/\Gm])$ is naturally the bounded derived
    $\infty$-category of pure complex Hodge structures of weight $i$; in
    particular, there is a $t$-structure whose heart is the abelian category of
    pure complex Hodge structures of weight $i$. In order to get pure complex Hodge structures of weight $i \not= 0, 1$, one can simply twist Example~\ref{ex:p1gm} and consider the $[1]$-shaped filtration given by     \[
    0 \mapsto \F_0\Perfscr(\PP^1)=\langle\Oscr(i)\rangle \qquad 1 \mapsto \F_1\Perfscr(\PP^1)=\langle\Oscr(i), \Oscr(i+1)\rangle.
    \]
    One can make a similar
    construction in the more interesting case where $S=\Spec\RR$,
    $X\subseteq\PP^2_\RR$ is the conic curve $$x^2+y^2+z^2=0$$ (which has no
    $\RR$-points). In this case, $X$ is a twisted form of $\PP^1_\RR$ and there
    is an action of an algebraic group $\U(1)$ acting on $\PP^1_\RR$, where
    $\U(1)$ is a non-split form of $\Gm$ with $\RR$-points
    $\U(1)(\RR)\subseteq\U(1)(\CC)\iso\CC^\times$ given by
    $S^1\subseteq\CC^\times$. Thus, $[X/\U(1)]$ is an Artin stack over
    $\Spec\RR$ and it is a twisted form of $[\PP^1/\Gm]$.
    There is a $[1]$-shaped Beilinson filtration on $\Perfscr(X)$ which is a
    twisted form of the Beilinson filtration on $\Perfscr(\PP^1)$. The graded
    pieces are given by $\mathrm{gr}_0\Perfscr(X)\we\Perfscr(\RR)$ and
    $\mathrm{gr}_1\Perfscr(X)\we\Perfscr(\mathds{H})$, where $\mathds{H}$ is the quaternion
    algebra over $\RR$. Again, $\U(1)$ preserves the filtration and 
    Theorem~\ref{thm:main2-g} on the quotient stack we get a $[1]$-shaped
    filtration, which will give rise to a semiorthogonal decomposition by
    Corollary~\ref{cor:main-sod-version}, with graded piece
    $$\mathrm{gr}_i\Perfscr([X/\U(1)])$$ given by the bounded derived
    $\infty$-category of pure real Hodge structures of weight $i$.
    In particular, there is a bounded $t$-structure on each graded piece with
    heart the abelian category of pure real Hodge structures of weight $i$. To
    obtain the picture for $i \not= 0,1$ one takes Serre twists as above. For
    details on this perspective on Hodge theory, see the work of Simpson, for
    example~\cite[Lemma~19]{simpson-icm}.
\end{remark}

Thanks to Corollary~\ref{cor:0-tr}, we will see that the
Theorem~\ref{thm:main2-g} is a consequence of basic covering space theory.
Let $\ShB G$ be the classifying stack $[\Spec S/G]$. It is $1$-affine by
Warning~\ref{warn:1-aff}.
Since $\ShCat$ is an fppf stack by Theorem~\ref{thm:lurie}, there is a morphism of stacks\footnote{Here, we use that $\ShB G$ is the quotient stack in the fppf topology, following the conventions of \cite[Tag 044O]{stacks}.}
$$\ShB G \rightarrow \ShCat$$
classifying the $G$-action on $\Perfscr(X)$.
Now, suppose that we have a $P$-filtration $\F_{\star}\Perfscr(X) \rightarrow \Perfscr(X)$. This is equivalent to giving a commutative diagram of stacks
\begin{equation} \label{lift}
\begin{tikzcd}
* \ar{d} \ar{r} & \mathbf{Filt}_P \ar{d} \\ 
\ShB G \ar{r}  & \mathbf{Cat}.
\end{tikzcd}
\end{equation}

\begin{lemma} \label{prop:g-lift}
    A lift $\ShB G \rightarrow \mathbf{Filt}_P$ filling in the
    diagram~\eqref{lift} exists if and only if a lift in 
\begin{equation} \label{lift-0}
\begin{tikzcd}
 & \pi_0\mathbf{Aut}_{\mathbf{Filt}_P}(\F_{\star}\Perfscr(X) \rightarrow \Perfscr(X)) \ar{d} \\ 
G \ar{r} \ar[dashed]{ur} & \pi_0\mathbf{Aut}_{\mathbf{Cat}}(\Perf(X))
\end{tikzcd}
\end{equation}
exists in the category of fppf sheaves of groups on $\Aff_S$.
\end{lemma}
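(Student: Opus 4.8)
The plan is to rephrase both lifting problems as problems about maps of fppf sheaves of $\infty$-groups, and then to use Proposition~\ref{prop:fib-disc} to see that the forgetful map $u$ is a monomorphism on automorphism sheaves, so that the only possible obstruction is the $\pi_0$-level one recorded by~\eqref{lift-0}.

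Recall the relevant formalism. Since $\ShB G=[\Spec S/G]$ is the fppf sheaf quotient, it is a pointed connected object of the $\infty$-topos of fppf sheaves of spaces on $\Aff_S$ with $\Omega\ShB G\we G$, and $\mathbf{Cat}$ and $\mathbf{Filt}_P$ are fppf stacks by Theorems~\ref{thm:lurie} and~\ref{thm:desc}. For an fppf stack $\mathbf{Y}$ with object $y$, a pointed map $\ShB G\to\mathbf{Y}$ at $y$ factors through the maximal sub-$\infty$-groupoid $\iota\mathbf{Y}$ and, $\ShB G$ being connected, through the connected component of $y$; by the loop--delooping equivalence in the fppf $\infty$-topos the space of such maps is then $\Map(G,\mathbf{Aut}_{\mathbf{Y}}(y))$, maps of fppf sheaves of $\infty$-groups. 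With $\mathbf{Y}=\mathbf{Cat}$ and $y=\Perfscr(X)$ this turns the bottom edge of~\eqref{lift} into the map $a\colon G\to\mathbf{Aut}_{\mathbf{Cat}}(\Perfscr(X))$ classifying the $G$-action, and with $\mathbf{Y}=\mathbf{Filt}_P$ and $y_0$ the object $\F_{\star}\Perfscr(X)\to\Perfscr(X)$ (so that $u(y_0)\we\Perfscr(X)$) it identifies the space of lifts in~\eqref{lift} with the space of maps $\tilde a\colon G\to\mathbf{Aut}_{\mathbf{Filt}_P}(y_0)$ lifting $a$ along $u_{*}\colon\mathbf{Aut}_{\mathbf{Filt}_P}(y_0)\to\mathbf{Aut}_{\mathbf{Cat}}(\Perfscr(X))$.

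The heart of the argument is to analyze $u_{*}$. Its fiber over the identity is the automorphism sheaf of $y_0$ computed inside the fiber of $u\colon\mathbf{Filt}_P\to\mathbf{Cat}$ over $\Perfscr(X)$, which by the proof of Proposition~\ref{prop:fib-disc} is the prestack $\mathbf{ExFilt}_P^{\Perfscr(X)}$. Since this takes values in posets, and an object of a poset has trivial automorphism group, that fiber is the trivial sheaf of groups; hence $u_{*}$ is a monomorphism of fppf sheaves of $\infty$-groups, i.e.\ an isomorphism on higher homotopy sheaves and injective on $\pi_0$. Equivalently, the square
\begin{equation*}
\begin{tikzcd}
\mathbf{Aut}_{\mathbf{Filt}_P}(y_0)\ar{r}\ar{d} & \mathbf{Aut}_{\mathbf{Cat}}(\Perfscr(X))\ar{d}\\
\pi_0\mathbf{Aut}_{\mathbf{Filt}_P}(y_0)\ar{r} & \pi_0\mathbf{Aut}_{\mathbf{Cat}}(\Perfscr(X))
\end{tikzcd}
\end{equation*}
of fppf sheaves is cartesian, the top edge being $u_{*}$ and the bottom edge a monomorphism of sheaves of groups. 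Applying $\Map(G,-)$ to this cartesian square, a lift $\tilde a$ of $a$ exists if and only if the composite $G\xrightarrow{a}\mathbf{Aut}_{\mathbf{Cat}}(\Perfscr(X))\to\pi_0\mathbf{Aut}_{\mathbf{Cat}}(\Perfscr(X))$ factors through the subgroup sheaf $\pi_0\mathbf{Aut}_{\mathbf{Filt}_P}(y_0)$; since this subgroup is included by a monomorphism, the factorization is then unique and automatically a homomorphism of sheaves of groups. But this composite is exactly the bottom arrow $G\to\pi_0\mathbf{Aut}_{\mathbf{Cat}}(\Perf(X))$ of~\eqref{lift-0}, and a factorization of it through $\pi_0\mathbf{Aut}_{\mathbf{Filt}_P}(y_0)$ is precisely a lift in~\eqref{lift-0}. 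Chaining the two equivalences of spaces gives the lemma: a lift in~\eqref{lift} exists if and only if a lift in~\eqref{lift-0} exists.

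The only step that is more than bookkeeping is the vanishing of the fiber of $u_{*}$, and it rests entirely on Proposition~\ref{prop:fib-disc}: the fibers of $u$ are posets, so they have no automorphisms, so $u_{*}$ is a monomorphism and the lifting problem is $1$-truncated. In contrast to the general situation, no higher cocycle conditions can obstruct a lift --- this is the phenomenon advertised in the introduction, that only the $1$-categorical datum~\eqref{lift-0} is relevant. The remaining points to watch are formal: ``$\pi_0$'' always denotes the fppf sheafification of the naive $\pi_0$, and the loop--delooping arguments are carried out in the fppf $\infty$-topos over $\Aff_S$, which is legitimate because every descent statement invoked is fppf-local.
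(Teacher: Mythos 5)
Your proof is correct and is essentially the paper's argument in looped-down form: both hinge on the discreteness of the fibers of $u$ (Lemma~\ref{lem:0category}, Proposition~\ref{prop:fib-disc}, Corollary~\ref{cor:0-tr}), and where the paper cites covering space theory in the fppf $\infty$-topos to reduce the lift along the $0$-truncated map to the $\pi_1$-level condition, you re-derive that same criterion by showing $u_*$ is a monomorphism on automorphism sheaves and pulling back along the $\pi_0$-square. The only cosmetic difference is that you make the delooping equivalence and the cartesian square explicit rather than quoting the lifting criterion.
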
 

\begin{proof} The ``only if" direction is clear. Suppose that the filler
    in~\eqref{lift-0} exists. We may assume that $S=X$. The horizontal arrows
    in~\eqref{lift} factor through the underlying sheaves of spaces so we
    obtain a commutative diagram
\begin{equation} \label{lift-1}
\begin{tikzcd}
* \ar{d} \ar{r} & \iota\mathbf{Filt}_{P, \F_{\star}\Perfscr(X) \rightarrow \Perfscr(X)} \ar{d}{p} \\ 
\ShB G \ar{r} & \iota\mathbf{Cat}_{\Perfscr(X)},
\end{tikzcd}
\end{equation}
where $\iota\mathbf{Cat}_{\Perfscr(X)}$ (resp. $\iota\mathbf{Filt}_{P,
    \F_{\star}\Perfscr(X) \rightarrow \Perfscr(X)}$) denotes the connected
    component of $\iota\ShCat$ (resp. $\iota\ShFilt_P$) corresponding to the base point $\Perfscr(X)$ (resp.
    $\F_{\star}\Perfscr(X) \rightarrow \Perfscr(X))$ so it suffices to solve
    this lifting problem. By Corollary~\ref{cor:0-tr}, the right hand vertical
    map is $0$-truncated and so covering space theory in the $\infty$-topos of
    fppf sheaves on $\Aff_S$ tells us that the
    existence of a filler is assured if the map $G \rightarrow
    \pi_1(\iota\mathbf{Cat}_{\Perfscr(X)}, \Perfscr(X))$ factors through 
\[
p_*(\pi_1(\iota\mathbf{Filt}_{P, \F_{\star}\Perfscr(X) \rightarrow \Perfscr(X)},  \F_{\star}\Perfscr(X) \rightarrow \Perfscr(X))) \subset  \pi_1(\iota\mathbf{Cat}_{\Perfscr(X)}, \Perfscr(X)).
\]
This is exactly the existence of a lift as in~\eqref{lift-0}.
\end{proof}

\begin{proof} [Proof of Theorem~\ref{thm:main2-g}] 
    The statement of Theorem~\ref{thm:main2-g} asserts the existence of a
    filtration on $\Perfscr([X/G])$. The assumption guarantees by
    Lemma~\ref{prop:g-lift} that
    there is a point in $\mathbf{Filt}_P(\ShB G)$ lying over
    $\Perfscr(X)\in\ShCat(\ShB G)$. In other words,
    $\F_\star\Perfscr(X)\rightarrow\Perfscr(X)$ admits the structure of a $G$-equivariant
    filtration. Now, applying homotopy $G$-fixed points, we obtain a filtration
    $(\F_\star\Perfscr(X))^{hG}\rightarrow\Perfscr(X)^{hG}\we\Perfscr([X/G])$,
    as desired.
\end{proof}

\begin{remark} \label{remark:triangulated} Theorem~\ref{thm:main2-g} is somewhat surprising since
    \emph{a priori} it involves manipulating higher-categorical objects which
    usually involves an infinite list of coherent descent data. The explanation
    that this is not necessary for filtrations is given by
    Lemma~\ref{lem:0category}. This also explains why Elagin in \cite{elagin}
    could stay within the realm of triangulated categories which usually does
    not interact well with descent problems.
\end{remark}


\section{Semiorthogonal decompositions}\label{sec:sod}

The previous sections dealt with general filtrations. Now, we deal with
semiorthogonal decompositions in the sense of~\cite{bondal-kapranov}. In particular, we prove Theorem~\ref{thm:sods} via
Theorem~\ref{thm:et-sod},
which says that a subcategory $\Ascr\subseteq\Cscr$ is admissible if and only
if it is fppf-locally admissible.

\subsection{Admissibility}

Let $S$ be a $1$-affine algebraic stack. We review in this section the definitions and standard facts about
admissible subcategories.

\begin{definition}[\cite{bondal-kapranov}]
    Let $\Ascr\subseteq\Cscr$ be a fully faithful inclusion of
    $\Oscr_S$-linear categories. We say that $\Ascr$ is {\bf right-admissible}
    in $\Cscr$
    if the inclusion admits an $\Oscr_S$-linear right adjoint. Similarly, $\Ascr$ is {\bf
    left-admissible} in $\Cscr$ if the inclusion admits an $\Oscr_S$-linear left adjoint.
    If the inclusion admits both adjoints, we say that $\Ascr\subseteq\Cscr$ is
    {\bf admissible}.
\end{definition}

\begin{remark}
    In the case of greatest interest, $\Cscr$ will be dualizable (i.e., smooth and proper) as an $\Oscr_S$-linear
    category in which case
    the three notions of admissibility for a full subcategory
    $\Ascr\subseteq\Cscr$ agree, and are all furthermore equivalent to the
    smoothness of $\Ascr$.
\end{remark}

\begin{definition}
    If $\Ascr \subseteq \Cscr$ is an inclusion of stable $\infty$-categories, then
    the left (resp. right) orthogonal of $\Ascr$, denoted by $^{\perp}\Ascr$ (resp.
    $\Ascr^{\perp}$) is the full subcategory of $\Cscr$ spanned by objects $y \in
    \Cscr$ such that $\MapSp_{\Cscr}(y, x)$ (resp. $\MapSp_{\Cscr}(x, y)$) is
    contractible for all $x \in \Ascr$. If the ambient stable $\infty$-category
    is ambiguous, we will write
    $(^{\perp}\Ascr)_{\Cscr}$ (resp. $(\Ascr^{\perp})_{\Cscr}$) to avoid confusion.
\end{definition}

The next well-known proposition furnishes a list of checkable criteria for right-admissibility. 

\begin{proposition}[\cite{bondal-kapranov}]\label{prop:all-equal}
    Let $S$ be a $1$-affine algebraic stack.
    Suppose that $i\colon\Ascr\subseteq\Cscr$ is a fully faithful $\Oscr_S$-linear functor of
    $\Oscr_S$-linear categories. Then, the following conditions
    on $i$ are equivalent.
    \begin{enumerate}
        \item[{\em (1)}]   For every $x$ in $\Cscr$ there is a cofiber sequence $y\rightarrow
            x\rightarrow z$ where $y\in\Ascr$ and $z\in\Ascr^\perp$.
        \item[{\em (2)}]  There is a $t$-structure $(\Cscr_{\geq 0}, \Cscr_{\leq 0})$ on $\Cscr$ for which $\Cscr_{\geq 0} \simeq \Ascr$.
        \item[{\em (3)}]   The functor $i$ admits a right adjoint.
        \item[{\em (4)}]   The inclusion $i'\colon\Ascr^\perp \subseteq \Cscr$ admits a left adjoint.
        \item[{\em (5)}]   The composition $\Ascr^\perp\stackrel{i'}{\rightarrow} \Cscr \rightarrow \Cscr/\Ascr$ is an equivalence.
    \end{enumerate}
    Furthermore, the adjoints
    appearing in $(3)$ and $(4)$ are automatically $\Oscr_S$-linear.
\end{proposition}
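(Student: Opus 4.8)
The plan is to prove the equivalence of (1)--(5) by a cycle of implications, treating the $\Oscr_S$-linearity claim separately at the end. Most of these equivalences are classical for triangulated categories (Bondal--Kapranov), so the main task is to organize them in the stable $\infty$-categorical setting and to use $1$-affineness to reduce linearity questions to the underlying $\infty$-categorical ones. I would run the cycle $(3)\Rightarrow(1)\Rightarrow(4)\Rightarrow(5)\Rightarrow(3)$ and then insert $(2)$ via $(1)\Leftrightarrow(2)$.

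First, $(3)\Rightarrow(1)$: if $i$ has a right adjoint $i^R$, then for $x\in\Cscr$ the counit $y := i i^R x \to x$ sits in a cofiber sequence $y\to x\to z$; one checks $z\in\Ascr^\perp$ by applying $\MapSp_\Cscr(a,-)$ for $a\in\Ascr$ and using the adjunction plus the long exact sequence, since $\MapSp_\Cscr(a,y)\simeq\MapSp_\Cscr(a,x)$ is an equivalence. Next $(1)\Rightarrow(4)$: given the functorial-looking cofiber sequences, send $x\mapsto z$; one must check this assignment is functorial and exact and provides a left adjoint to $i'\colon\Ascr^\perp\hookrightarrow\Cscr$, which amounts to verifying the universal property $\Map_\Cscr(x,z')\simeq\Map_\Cscr(z,z')$ for $z'\in\Ascr^\perp$, again via the cofiber sequence and $\Map_\Cscr(y,z')\simeq 0$. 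Then $(4)\Rightarrow(5)$: the composite $\Ascr^\perp\to\Cscr\to\Cscr/\Ascr$ is essentially surjective because the left adjoint to $i'$ provides, for every $x$, an object of $\Ascr^\perp$ becoming isomorphic to $x$ in the quotient; full faithfulness follows because maps in $\Cscr/\Ascr$ out of objects of $\Ascr^\perp$ are already computed in $\Cscr$ (the $\Ascr$-local objects see no difference between $\Cscr$ and the Verdier quotient). Finally $(5)\Rightarrow(3)$: if the composite is an equivalence, then the quotient functor $\Cscr\to\Cscr/\Ascr\simeq\Ascr^\perp$ followed by $i'$ gives a functor $\Cscr\to\Cscr$; I would identify this (after adjusting by the cofiber of the localization) as producing a right adjoint to $i$, or more cleanly, recall that a Verdier quotient $\Cscr\to\Cscr/\Ascr$ admitting a fully faithful section is equivalent to $\Ascr$ being a Bousfield colocalization, which is exactly the existence of the right adjoint to $i$.

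For $(1)\Leftrightarrow(2)$: given (1), set $\Cscr_{\geq 0} := \Ascr$ and $\Cscr_{\leq 0} := \Ascr^\perp$ (with the appropriate shift conventions so that the cofiber sequence of (1) is the truncation triangle); the axioms of a $t$-structure in the sense of \cite[Definition 1.2.1.4]{ha} are precisely: $\Map_\Cscr(\Ascr,\Ascr^\perp[-1])=0$ (orthogonality), closure under the respective shifts (automatic since $\Ascr,\Ascr^\perp$ are stable subcategories, so in fact this is a slightly degenerate "$t$-structure" where the heart is zero — one should be careful here, and it may be cleaner to phrase (2) as the existence of a semiorthogonal decomposition $\langle\Ascr^\perp,\Ascr\rangle$, which is what the paper presumably intends), and the existence of truncation triangles, which is (1). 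Conversely a $t$-structure with $\Cscr_{\geq 0}\simeq\Ascr$ gives the truncation cofiber sequences, which is (1). I expect the main obstacle is purely expository: getting the variance/shift conventions between "$t$-structure", "semiorthogonal decomposition", and "left vs.\ right orthogonal" consistent, since a careless choice flips left and right admissibility.

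For the last sentence, the automatic $\Oscr_S$-linearity of the adjoints: by $1$-affineness $\Cat_S\simeq\Mod_{\Perfscr(S)}(\Cat_\infty^\perf)$, so an $\Oscr_S$-linear category is a module over the $E_\infty$-ring $\Perfscr(S)$ in $\Cat_\infty^\perf$. The inclusion $i$ is a map of $\Perfscr(S)$-modules; its right adjoint (constructed above purely in $\Cat_\infty^\perf$) is automatically lax $\Perfscr(S)$-linear, and since $\Perfscr(S)$ is rigid/dualizable as a module over itself (indeed each object of $\Perfscr(S)$ is dualizable), lax linearity upgrades to genuine linearity — this is the standard projection-formula argument, e.g.\ as in \cite[Remark D.1.5.3]{sag} or the discussion of internal adjoints in module categories. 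I would cite this rather than reprove it. The plan is therefore short: cycle the five conditions using the cofiber-sequence description, and invoke $1$-affineness plus rigidity of $\Perfscr(S)$ for the linearity addendum.
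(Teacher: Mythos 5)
Your proposal is correct and takes essentially the same route as the paper: the counit/unit cofiber construction linking (3) and (4), the Verdier-quotient mapping-space formula and standard Bousfield-localization machinery for the steps through (5), the degenerate $t$-structure with $\Cscr_{\geq 0}=\Ascr$ and $\Cscr_{\leq 0}=\Ascr^\perp$ for (2), and rigidity of $\Perfscr(S)$ (every object dualizable) for the automatic $\Oscr_S$-linearity. The one directional slip is in your (4)$\Rightarrow$(5): the locality fact is that maps \emph{into} objects of $\Ascr^\perp$ are unchanged when passing to $\Cscr/\Ascr$, not maps out of them, but since your full-faithfulness check has both source and target in $\Ascr^\perp$ the argument still goes through.
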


\begin{proof}
    One reference for most of the implications
    is~\cite[Section~1]{bondal-kapranov}, but we will sketch the arguments for
    completeness.
    The implication $(1) \Rightarrow (2)$ is immediate from the definition of a
    $t$-structure~\cite[D\'efinition~1.3.1]{bbd}.\footnote{Note that we are
    working with homological instead of cohomological indexing.} The
    implication $(2) \Rightarrow (3)$ is given by \cite[Proposition
    1.3.3]{bbd}. Let us prove that $(3)$ and $(4)$ are equivalent. Since $i\colon\Ascr
    \hookrightarrow \Cscr$ has a right adjoint $R$, we can cook up an
    endofunctor of $\Cscr$ by the formula
    \[
    L\colon X \mapsto \cofib(iRX \rightarrow X)
    \]
    given by taking the counit of the adjunction.
    The functor $L$ takes $X$ to $\Ascr^{\perp}$ since, for any $Y \in \Ascr$ we have a cofiber sequence
    \[
    \MapSp_\Cscr(iY, iRX) \rightarrow \MapSp_\Cscr(iY, X) \rightarrow
    \MapSp_\Cscr(iY, LX)
    \]
    where the first arrow is an equivalence, hence the last term is
    contractible. The check that $L$ is indeed the left adjoint is standard.
    Conversely, if $L$ is a left adjoint to the inclusion $i'\colon\Ascr^{\perp}
    \rightarrow \Cscr$, then we define the right adjoint as
    \[
    R\colon X \mapsto \fib(X \rightarrow i'LX).
    \]
    A similar argument shows that $R$ is right adjoint to $i$.

    Now we prove that $(4)$ implies $(5)$. Let $i_0$ denote the composition
    $i_0\colon\Ascr^{\perp} \rightarrow \Cscr \rightarrow \Cscr/\Ascr$ and observe that the
    adjoint
    $L\colon  \Cscr\rightarrow \Ascr^{\perp}$ of $(4)$ vanishes on $\Ascr$ and hence factors
    through $\Cscr/\Ascr$ to define a functor $L_0\colon \Cscr/\Ascr \rightarrow \Ascr^{\perp}$
    \cite[Theorem 1.3.3(i)]{nikolaus-scholze} which can be checked to be the left adjoint to
    $i_0$, and fits into the diagram
    \begin{equation} \label{eq:compatible}
    \begin{tikzcd}
    \Cscr \ar{r}{L} \ar{d} & \Ascr^{\perp}\\
    \Cscr/\Ascr \ar[swap]{ur}{L_0} &. 
    \end{tikzcd}
    \end{equation}Since $L$ and the projection $\Cscr \rightarrow \Cscr/\Ascr$ are
    essentially surjective, it is easy to see that $L_0$ is as well. It remains to show that
    $L_0$ is fully faithful. Suppose that $X, Y \in \Cscr$ with images $\overline{X},
    \overline{Y} \in \Cscr/\Ascr$. Since $(4)$ implies $(3)$, the existence of a right
    adjoint to $i$ tells us that the filtered category $\Ascr_{/Y}$ admits a final object,
    namely, $RY$. From this we compute, using the formula for mapping space in Verdier
    quotients \cite[Theorem 1.3.3(ii)]{nikolaus-scholze}
       \begin{eqnarray*}
           \MapSp_{\Cscr/\Ascr}(\overline{X}, \overline{Y}) &  \simeq &
           \colim_{Z \in \Ascr_{/LY}} \MapSp_\Cscr(X,  \cofib(Z \rightarrow Y))\\
    & \simeq & \MapSp_\Cscr(X, \cofib(iRY \rightarrow Y))\\
    & \simeq & \MapSp_\Cscr(X, LY)\\
           & \simeq & \MapSp_{\Cscr/\Ascr}(\overline{X}, \overline{LY}).
    \end{eqnarray*}
    This proves that (4) implies (5). Now (1) follows from (5) by the standard
    machinery of Bousfield localization.

     The fact that $\Perfscr(R)$ is rigid symmetric
    monoidal, meaning that every object is dualizable, implies that the
    adjoints above, if they exist, are automatically $R$-linear;
    see for example~\cite[Proposition~4.9(3)]{hss1}.
\end{proof}

\begin{example}
    The requirement that an inclusion be right or left-admissible is very
    strong. Let $k$ be a field and let $\PP^1=\PP^1_k$. If $H$ is
    a hyperplane in $\PP^1$ with complement $U$, then $\Oscr_H$ is naturally an
    object of $\Perfscr(\PP^1)$. Let $\langle H\rangle$ denote the thick
    $k$-linear
    subcategory of $\Perfscr(\PP^1)$ generated
    by $\Oscr_H$. The quotient of $\Perfscr(\PP^1)$ by $\langle H\rangle$ is
    $\Perfscr(U)$. However, it is clear that
    there can be no fully faithful functor $\Perfscr(U)\rightarrow\Perfscr(\PP^1)$
    because the mapping spectra in $\Perfscr(\PP^1)$ are perfect complexes.
\end{example}

\subsection{Semiorthogonal decompositions}

These were first introduced by Bondal~\cite{bondal} and
Bondal--Kapranov~\cite{bondal-kapranov}. We would like to define semiorthogonal
decompositions which are indexed not just by $\Delta^n$ or $\ZZ$ but also by a poset $P$. The next definition is a naive generalization of the definition that appears in \cite[Definition 4.1]{bondal-kapranov}:

\begin{definition}\label{def:adm-fil} Let $\Cscr$ be an $\Oscr_S$-linear category,
    let $P$ be a poset, and consider a $P$-shaped filtration $\F_\star\Cscr
    \rightarrow \Cscr$. We say that the filtration is {\bf admissible} (resp.
    {\bf right-admissible, left-admissible}) if for every arrow $p \rightarrow q$ in
    $P$ the fully faithful embedding $\F_p\Cscr \hookrightarrow \F_q\Cscr$ is
    admissible (resp. right-admissible, left-admissible). 
    
    We say that an (right, left) admissible
    filtration $\F_\star\Cscr\rightarrow\Cscr$ is a {\bf $P$-shaped
    (right, left) semiorthogonal decomposition of $\Cscr$} if
    \begin{enumerate}
        \item[(1)] the filtration is exhaustive and
        \item[(2)] for each arrow $p\in P$, the subcategory
            $\F_p\Cscr\subseteq\Cscr$ is (right, left) admissible.
    \end{enumerate}
\end{definition}

\begin{remark}
    In practice, while admissible $P$-shaped filtrations come up in many
    situations,
    $P$-shaped semiorthogonal decompositions occur usually when $P$ is
    filtered. Moreover, in this case,
    each $\F_p\Cscr\subseteq\Cscr$ is automatically admissible (resp. right-admissible,
    left-admissible) as well by~\cite[Proposition~4.4]{bondal-kapranov}, i.e.,
    condition (2) above is superfluous.
\end{remark}

We make a comparison between Definition~\ref{def:adm-fil} with a notion that
appears in textbook references (e.g. \cite[Definition 1.59]{huybrechts-fm}), at
least in the case of the finite ordered set $[n]$. See
also~\cite[Proposition~4.4]{bondal-kapranov}.

\begin{proposition} \label{prop:sod-adm}
     Let $\Cscr$ be an $\Oscr_S$-linear category. Then the following data are equivalent:
     \begin{enumerate}
         \item[{\rm (1)}] an $[n]$-shaped semiorthogonal decomposition
         $\F_\star\Cscr$ of $\Cscr$.
     \item[{\rm (2)}] A collection of admissible small $\Oscr_S$-linear full subcategories $\{ \Cscr_i \}_{0 \leq i \leq n}$ such that
    \begin{enumerate}
        \item[{\rm (a)}] for any $i \leq j$, $\MapSp_{\Cscr}(c_j,c_i) \we 0$ or
    all $c_i\in\Cscr_i$, $c_j\in\Cscr_j$, i.e., $\Cscr_i \subset (\Cscr_j)
            ^{\perp}$ and
\item[{\rm (b)}] the smallest stable subcategory containing the $\Cscr_i$'s is all of $\Cscr$.
    \end{enumerate}
    \end{enumerate}
\end{proposition}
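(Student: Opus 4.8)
The plan is to set up a dictionary between the two descriptions and to check that each piece of structure on one side produces the required structure on the other. Given an $[n]$-shaped semiorthogonal decomposition $\F_\star\Cscr$ as in~(1), the natural candidate for the family in~(2) is obtained by taking successive quotients: set $\Cscr_0 = \F_0\Cscr$ and, for $1\leq i\leq n$, let $\Cscr_i$ be the right orthogonal $(\F_{i-1}\Cscr)^\perp$ inside $\F_i\Cscr$. By Proposition~\ref{prop:all-equal}, applied to the admissible inclusion $\F_{i-1}\Cscr\hookrightarrow\F_i\Cscr$, this $\Cscr_i$ is equivalent to the Verdier quotient $\F_i\Cscr/\F_{i-1}\Cscr$, it is admissible in $\F_i\Cscr$, and --- since $\F_i\Cscr$ is itself admissible in $\Cscr$ by condition~(2) of Definition~\ref{def:adm-fil} --- it is admissible in $\Cscr$ by composing adjoints. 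Conversely, given a family $\{\Cscr_i\}$ as in~(2), define $\F_p\Cscr$ to be the smallest stable $\Oscr_S$-linear subcategory of $\Cscr$ containing $\Cscr_0,\ldots,\Cscr_p$; condition~(b) says the top piece is exhaustive, and one must check that each inclusion $\F_p\Cscr\hookrightarrow\F_q\Cscr$ and each $\F_p\Cscr\hookrightarrow\Cscr$ is admissible.

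The key steps, in order, are: (i) produce the cofiber sequences witnessing admissibility of each $\F_{p}\Cscr\hookrightarrow\F_q\Cscr$ in the $(2)\Rightarrow(1)$ direction, by an induction on $q-p$ that glues together the semiorthogonality hypothesis~(a) with the admissibility of each individual $\Cscr_i$; here one builds, for $x\in\F_q\Cscr$, a filtration $0 = x_{p}\to x_{p+1}\to\cdots\to x_q = x$ with $\cofib(x_{j-1}\to x_j)\in\Cscr_j$, using the right adjoint to $\Cscr_j\hookrightarrow\Cscr$ at each stage and checking orthogonality of the pieces from~(a); (ii) deduce that $\F_p\Cscr$ is admissible in $\Cscr$ itself by the same decomposition applied with $q = n$, so condition~(2) of Definition~\ref{def:adm-fil} holds; (iii) in the reverse direction, identify $\Cscr_i = \F_i\Cscr/\F_{i-1}\Cscr$ via Proposition~\ref{prop:all-equal}(5) and verify semiorthogonality~(a) from the fact that $\Cscr_i = (\F_{i-1}\Cscr)^\perp\cap\F_i\Cscr$ and $\Cscr_j\subseteq\F_{j-1}\Cscr$ whenever $i<j\leq$ the relevant index, so that $\Map$-spectra from $\Cscr_j$ to $\Cscr_i$ vanish; and finally (iv) check that these two constructions are mutually inverse, which amounts to the statement that $\F_p\Cscr$ is generated by the orthogonal complements $\Cscr_0,\ldots,\Cscr_p$ --- an immediate consequence of the cofiber-sequence decomposition in~(i) --- and conversely that reconstructing the orthogonals from the $\F$'s returns the original $\Cscr_i$.

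The main obstacle is step~(i): assembling the individual admissibility data for the $\Cscr_i$ into a coherent $[n]$-step filtration of an arbitrary object, and in particular verifying that the resulting subquotient lies in the correct piece. The semiorthogonality condition~(a) is exactly what is needed to guarantee that, after splitting off the $\Cscr_0$-part of $x$ using its right adjoint, the residual object has no further $\Cscr_0$-component and the process can be iterated; one also needs that each $\F_p\Cscr$, built as a generated subcategory, actually coincides with $\{x : x$ admits such a filtration with top index $\leq p\}$, which follows because the latter is already a stable subcategory containing $\Cscr_0,\ldots,\Cscr_p$. Everything else --- $\Oscr_S$-linearity of the adjoints, fully-faithfulness of the inclusions, the translation between $t$-structures, quotients, and orthogonals --- is supplied verbatim by Proposition~\ref{prop:all-equal}, so the proof is essentially a bookkeeping exercise organized around that proposition together with \cite[Proposition~4.4]{bondal-kapranov} for the classical finite case.
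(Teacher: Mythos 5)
Your proposal is correct, and the underlying dictionary is the same as the paper's: from a filtration you take $\Cscr_0=\F_0\Cscr$ and $\Cscr_i$ the orthogonal complement of $\F_{i-1}\Cscr$ in $\F_i\Cscr$ (equivalently the Verdier quotient, via Proposition~\ref{prop:all-equal}), and conversely you let $\F_p\Cscr$ be the stable $\Oscr_S$-linear subcategory generated by $\Cscr_0,\ldots,\Cscr_p$. Where you differ is in how admissibility is recovered in the direction $(2)\Rightarrow(1)$: the paper reduces to $n=1$ and produces the right adjoint abstractly from the identification of $\Cscr_1$ with the Verdier quotient, using the $(5)\Rightarrow(3)$ implication of Proposition~\ref{prop:all-equal}, whereas you run the classical Bondal--Kapranov-style induction, splitting off the $\Cscr_0$-part of an arbitrary object with the right adjoint to $\Cscr_0\hookrightarrow\Cscr$ and iterating, so that every object acquires a finite filtration with graded pieces in the $\Cscr_j$; this is more laborious, but it genuinely justifies the ``it suffices to treat $n=1$'' reduction the paper leaves implicit, and taking $q=n$ it also gives admissibility of each $\F_p\Cscr$ in $\Cscr$, i.e.\ condition (2) of Definition~\ref{def:adm-fil}, directly. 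Two small points to tidy rather than gaps: the filtration in your step (i) should carry graded pieces in all of $\Cscr_0,\ldots,\Cscr_q$ (as your later description of splitting off the $\Cscr_0$-part indicates), not only in $\Cscr_{p+1},\ldots,\Cscr_q$, with the two-term decomposition for $\F_p\Cscr\subseteq\F_q\Cscr$ extracted by truncating it at stage $p$; and in step (iii) the containment you want is $\Cscr_i\subseteq\F_i\Cscr\subseteq\F_{j-1}\Cscr$ for $i<j$, which together with $\Cscr_j=(\F_{j-1}\Cscr)^{\perp}\cap\F_j\Cscr$ gives vanishing of $\MapSp$ \emph{from} $\Cscr_i$ \emph{to} $\Cscr_j$ --- the orientation forced by the right-orthogonal convention you (and the paper's own construction) use, as opposed to the direction printed in 2(a); with full admissibility either convention is available, but you should fix one and note that the dual argument with left adjoints supplies left-admissibility.
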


\begin{proof} It suffices to consider the case of $n=1$. In this case, given a
    $[1]$-shaped semiorthogonal decomposition $\F_0\Cscr \hookrightarrow \F_1\Cscr \simeq \Cscr$ we can take the Verdier quotient $\frac{\F_{1}\Cscr}{\F_0\Cscr}$ which is canonically equivalent to the right orthogonal of $\F_0\Cscr$ in $\F_1\Cscr$ by Proposition~\ref{prop:all-equal} ((3)$\Rightarrow$(5) direction) and is an admissible subcategory of $\Cscr$ by assumption. So the collection of admissible subcategories $\{\F_0\Cscr, \frac{\F_{1}\Cscr}{\F_0\Cscr}\}$ satisfy 2(a). Since the filtration is assumed to be exhaustive, we get 2(b). 

Now assume that we have a collection $\{ \Cscr_0, \Cscr_1 \}$ as in (2). We define the filtration $\Cscr(\star):= \Cscr_0 \rightarrow \langle \Cscr_0, \Cscr_1\rangle$ where the $ \langle \Cscr_0, \Cscr_1\rangle$ indicate the smallest stable $\infty$-category containing both $\Cscr_0$ and $\Cscr_1$. Then the inclusion $\Cscr_0 \rightarrow \langle \Cscr_0, \Cscr_1\rangle$ has $\Cscr_1$ as the Verdier quotient and Proposition~\ref{prop:all-equal} ((5)$\Rightarrow$(3) direction) tells us the inclusion does have a right adjoint.
The fact that this filtration is exhaustive follows from 2(b), i.e., the equivalence $\langle \Cscr_0, \Cscr_1\rangle \simeq \Cscr$.
\end{proof}

\begin{remark} \label{rem:k} If $\Cscr$ is a stable $\infty$-category, we denote by
    $\K(\Cscr)$ the algebraic $K$-theory spectrum of $\Cscr$. An $[n]$-shaped
    semiorthogonal decomposition $\Cscr(\star) \rightarrow \Cscr$ induces a decomposition
    $\K(\Cscr) \simeq \prod_{p=0}^n \K(\frac{\F_{p}\Cscr}{\F_{p-1}\Cscr})$. However, for a
    general poset $P$ such a decomposition on $K$-theory is \emph{not} guaranteed.
\end{remark}

\subsection{Marked variants}

We also want to discuss marked variants of semiorthogonal decompositions. Of greatest
interest are markings by exceptional objects.

\begin{definition}
    Suppose that $\Cscr$ is a small $\Oscr_S$-linear stable $\infty$-category.
    An {\bf exceptional object} of $\Cscr$ is an object $e\in\Cscr$ such that
    $\MapSp_\Cscr(e,e)\we \Oscr_S$ as an $\Oscr_S$-algebra.\footnote{Here we abuse notation a little and view
    $\ShMap_\Cscr(x,x)$ as a perfect complex on $S$.} In this case, the thick
    $\Oscr_S$-linear subcategory of $\Cscr$ generated by $e$ is equivalent to $\Perfscr(S)$.
    
    Let $P$ be a poset. A collection of objects $\{ e_p \}_{p \in P}$ of
    $\Cscr$ is an {\bf exceptional sequence} if each $e_p$ is
    an exceptional object and if $\MapSp_{\Cscr}(e_q,e_p)\we 0$ for $p<q$. The exceptional sequence is
    called {\bf full} if the objects generate $\Cscr$ as an $\Oscr_S$-linear category. If $R$ is an ordinary commutative ring, then
    an exceptional sequence in an $R$-linear category is called {\bf strong} if
    $\Hom_{\Cscr}(e_p,e_q[n])=0$ for $n\neq 0$ and all $p,q\in P$.
\end{definition}

\begin{definition}\label{def:adm-fil-marked} Let $\Cscr$ be an $\Oscr_S$-linear stable
    $\infty$-category. Then a marked $P$-shaped (right, left) admissible
    filtration $(\F_\star\Cscr
    \rightarrow \Cscr, M_*)$ is {\bf exceptional} if
    for each $p \in P$ the object $M_p$ is exceptional. If $P=[n]$, the
    admissible filtration is an $[n]$-shaped semiorthogonal decomposition, and each
    $M_p$ is contained in
    $(\F_{p-1}\Cscr)_{\F_{p}\Cscr}^\perp$, we say that
    $(\F_\star\Cscr\rightarrow\Cscr,M_*)$ is an {\bf exceptionally marked $[n]$-shaped semiorthogonal
    decomposition}.
\end{definition}

Following~\cite{auel-bernardara}, we could also study $\Ascr$-exceptional objects for any Azumaya $\Oscr_S$-algebra
$\Ascr$. For these, we require $\ShMap_\Cscr(e,e)\we\Ascr$ so that $e$
generates a subcategory of $\Cscr$ equivalent to $\Perfscr(\Ascr)$. We could
then define a version of $\{\Ascr_p\}_{p\in P}$-exceptional sequences. Call
these {\bf twisted exceptional objects}. We will use the obvious notion of a
{\bf full twisted exceptional collection}, which gives rise to the notion of a twisted
exceptionally marked semiorthogonal decomposition, whose formulation we leave to the reader.

\subsection{The local nature of admissibility}

We now prove the following result, which is the main technical theorem of the paper.
If $S$ is an algebraic stack, $\Cscr$ is an $\Oscr_S$-linear category, and $T\rightarrow S$
is a morphism of algebraic stacks, we let $\Cscr_T=\Perfscr(T)\otimes_{\Perfscr(S)}\Cscr$.

\begin{theorem}\label{thm:et-sod}
    Let $S$ be a $1$-affine algebraic stack.
    Suppose that $\Cscr$ is an $\Oscr_S$-linear $\infty$-category and let
    $\Ascr\subseteq\Cscr$ be an $\Oscr_S$-linear inclusion of a full subcategory.
    If $U\rightarrow S$ is an fppf cover, then $\Ascr\rightarrow\Cscr$ is right-admissible if
    and only if $\Ascr_U\rightarrow\Cscr_U$ is right-admissible.
    The same holds for left-admissibility and admissibility.
\end{theorem}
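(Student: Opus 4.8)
The plan is to reduce right-admissibility of $\Ascr\subseteq\Cscr$ to an fppf-local property by exhibiting the right adjoint as a limit of the right adjoints over an fppf hypercover. First I would recall from Proposition~\ref{prop:all-equal} that right-admissibility of $\Ascr\hookrightarrow\Cscr$ is equivalent to condition (1): for every $x\in\Cscr$ there is a cofiber sequence $y\to x\to z$ with $y\in\Ascr$ and $z\in\Ascr^\perp$. The ``only if" direction is the easy one: if $\Ascr\hookrightarrow\Cscr$ has an $\Oscr_S$-linear right adjoint $R$, then $R_U = \Perfscr(U)\otimes_{\Perfscr(S)}R$ is an $\Oscr_U$-linear right adjoint to $\Ascr_U\hookrightarrow\Cscr_U$, since base change along $\Perfscr(S)\to\Perfscr(U)$ is a symmetric monoidal functor on $\Cat_\infty^\perf$ and hence preserves the triangle identities; one only needs that $\Ascr_U\to\Cscr_U$ remains fully faithful, which holds because mapping spectra base change (as recorded in the proof of Theorem~\ref{thm:desc}, using \cite[Corollary~6.11]{dag11}).

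For the ``if" direction, fix a faithfully flat map $R\to S$ presenting the fppf cover (after reducing to the affine case by $1$-affineness and working fppf-locally on a presentation of $S$) and form the \v{C}ech nerve $S^{\otimes_R\bullet+1}$. The key input is that both $\Cscr$ and $\Ascr$ satisfy fppf descent as $\Oscr_S$-linear categories, so
$$\Cscr\we\lim_{\Delta}\Cscr\otimes_{\Perfscr(R)}\Perfscr(S^{\otimes_R\bullet+1}),\qquad \Ascr\we\lim_{\Delta}\Ascr\otimes_{\Perfscr(R)}\Perfscr(S^{\otimes_R\bullet+1}).$$
By hypothesis $\Ascr_S\hookrightarrow\Cscr_S$ is right-admissible; moreover, since right adjoints are essentially unique, the right adjoints on each level of the cosimplicial diagram are compatible with the cosimplicial structure maps — here one uses that the structure maps are themselves base-change functors, which commute with the already-established right adjoints by the ``only if" direction applied levelwise. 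Therefore the right adjoints assemble into a map of cosimplicial $\infty$-categories, and taking the limit produces an $\Oscr_S$-linear functor $R\colon\Cscr\to\Ascr$. One then checks it is right adjoint to the inclusion by verifying the unit/counit on the limit: the counit $iR\to\id_\Cscr$ and unit $\id_\Ascr\to Ri$ are limits of the corresponding (co)units on each level, and a natural transformation which is levelwise an equivalence on the relevant mapping spectra is an equivalence in the limit. Equivalently, in the language of Proposition~\ref{prop:all-equal}(1): given $x\in\Cscr$, base-change it to each $\Cscr\otimes_{\Perfscr(R)}\Perfscr(S^{\otimes_R\bullet+1})$, form the cofiber sequences $y_n\to x_n\to z_n$ there, note these are compatible across the cosimplicial diagram by uniqueness of the semiorthogonal factorization, and take the limit to get a cofiber sequence $y\to x\to z$ in $\Cscr$ with $y\in\Ascr$ and $z\in\Ascr^\perp$ — the last membership because being right-orthogonal is detected on mapping spectra, which satisfy descent.

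The cases of left-admissibility and admissibility follow by the same argument applied to left adjoints (or by duality), and admissibility is just the conjunction of the two. \textbf{The main obstacle} I anticipate is the bookkeeping needed to promote the levelwise right adjoints to a genuine map of cosimplicial objects — that is, checking that the right adjoints are strictly compatible with all the coface and codegeneracy maps, not merely level by level. The clean way around this is to observe that ``$\Oscr_T$-linear right-admissible inclusion" is itself a property cut out by a subfunctor of an appropriate arrow-category stack (much as $\ShFilt_P$ sits inside $\Fun(\Delta^1,\ShCat)$ in the proof of Theorem~\ref{thm:desc}), so that right-admissibility, once shown to be an fppf-local condition on mapping spectra via Proposition~\ref{prop:all-equal}, automatically descends; this sidesteps any explicit manipulation of the cosimplicial adjoint diagram. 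Verifying that the relevant mapping-spectrum conditions (existence of the orthogonal factorization, and orthogonality of $z$) are indeed fppf-local is then the routine content, handled exactly as the descent statements in the proof of Theorem~\ref{thm:desc}.
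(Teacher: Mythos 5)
Your overall strategy is the paper's: the easy direction by base change (the paper's Proposition~\ref{prop:bcadmissible}), then descent along the \v{C}ech nerve of the cover, levelwise right adjoints, compatibility across the cosimplicial structure maps, and assembly in the limit. But the two steps that carry the actual weight are not supplied. First, the compatibility you invoke --- that the levelwise adjoints commute with the cosimplicial base-change functors ``by uniqueness of adjoints and the only-if direction applied levelwise'' --- is not automatic: what is needed is that each base-change square is \emph{right adjointable}, i.e.\ that the canonical Beck--Chevalley transformation $q^*_\Ascr\circ r^n\rightarrow r^m\circ q^*_\Cscr$ is an equivalence. Knowing that $i^m$ admits \emph{some} right adjoint does not identify that adjoint with the base change of $r^n$ compatibly with the unit/counit; the paper proves adjointability directly (Lemma~\ref{lem:etale-adj}) by a flat-base-change computation of mapping spectra on a generating family, following \cite[Lemma 2.7]{ag}. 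The same input is hidden in your ``factorization'' variant: to say the cofiber sequences $y_n\rightarrow x_n\rightarrow z_n$ are compatible you must know that base change carries $(\Ascr_{U_n})^\perp$ into $(\Ascr_{U_m})^\perp$, which is exactly this projection-formula statement, not a formal consequence of descent for mapping spectra.

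Second, the coherence problem you correctly flag --- promoting levelwise, homotopy-compatible adjoints (or factorizations) to an actual object of the limit --- is where your proposed fix breaks down. Saying that right-admissibility is ``cut out by a subfunctor of an arrow-category stack, so it automatically descends once shown to be an fppf-local condition'' is circular: fppf-locality of right-admissibility is precisely the statement of Theorem~\ref{thm:et-sod}, and unlike full faithfulness it is an existence statement, not a condition on given mapping spectra that descends by Theorem~\ref{thm:desc}-type arguments. The paper avoids building a map of cosimplicial categories altogether: it verifies the pointwise criterion for the existence of a right adjoint (\cite[5.2.7.8]{htt}) by fixing $x\in\Cscr$, using the Riehl--Verity universal property of the $2$-category $\mathrm{Adj}$ \cite{riehl-verity} to organize the levelwise adjunctions coherently into a section $r(x)$ of the unstraightened Cartesian fibration $\widetilde{\Ascr}\rightarrow\Delta^{\op}$, and then using the Beck--Chevalley equivalences to show this section is Cartesian, hence an object of $\Ascr\we\lim_\Delta\Ascr_{U_\bullet}$; the adjunction identity then follows from descent for mapping spectra. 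An alternative repair of your sketch would be to show that, levelwise, the space of factorizations $y\rightarrow x\rightarrow z$ with $y\in\Ascr$, $z\in\Ascr^\perp$ is empty or contractible (so coherence is automatic), but some such mechanism must be supplied; as written the proposal asserts the conclusion at exactly the point where the proof has to do work.
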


The next proposition highlights the importance of working in an enhanced setting, as opposed to just with
triangulated categories. It states that the notion of right-admissibility is stable under (derived) base change.
This generalizes~\cite[Theorem~5.6]{kuznetsov}.

\begin{proposition}\label{prop:bcadmissible}
    Suppose that $i\colon\Ascr\subseteq\Cscr$ are $\Oscr_S$-linear categories, and assume
    that $i$ is right-admissible. If $\Fscr$ is another $\Oscr_S$-linear stable
    $\infty$-category, then
    \begin{equation*}
        \Fscr\otimes
        i\colon\Fscr\otimes_{\Perfscr(S)}\Ascr\rightarrow\Fscr\otimes_{\Perfscr(S)}\Cscr
    \end{equation*}
    is fully faithful and right-admissible. A similar statement holds for left-admissibility.
\end{proposition}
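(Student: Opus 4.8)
The plan is to observe that right-admissibility, left-admissibility and admissibility are each encoded purely in terms of \emph{internal adjunctions} in the $\infty$-category $\Cat_S$, and then to transport such adjunctions along the base-change functor $\Fscr\otimes_{\Perfscr(S)}(-)$ by exploiting the $(\infty,2)$-categorical structure on $\Cat_S$. By symmetry it is enough to treat right-admissibility, the left case being dual and admissibility being the conjunction of the two.

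First I would unwind the hypothesis with Proposition~\ref{prop:all-equal}: if $i$ is right-admissible then it admits a right adjoint $R\colon\Cscr\to\Ascr$, which that proposition guarantees is $\Oscr_S$-linear, as are the unit $\eta\colon\id_\Ascr\to Ri$ and the counit; thus $(i\dashv R)$ is an adjunction internal to $\Cat_S$, and full faithfulness of $i$ is exactly invertibility of $\eta$. Next I would invoke the natural $(\infty,2)$-categorical enhancement of $\Cat_\infty^\perf$ recorded in the footnote after Example~\ref{ex:stable}; since the relative tensor product over $\Perfscr(S)$ is computed by a bar construction, i.e.\ a geometric realization in $\Cat_\infty^\perf$ along which tensoring commutes, this enhancement descends to a symmetric monoidal $(\infty,2)$-categorical structure on $\Cat_S=\Mod_{\Perfscr(S)}(\Cat_\infty^\perf)$. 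Consequently, for fixed $\Fscr$, the functor $\Fscr\otimes_{\Perfscr(S)}(-)\colon\Cat_S\to\Cat_S$ is a functor of $(\infty,2)$-categories: it acts on $2$-morphisms and carries identities, composites and the triangle identities of $(i\dashv R)$ to those of a new adjunction $\Fscr\otimes i\dashv\Fscr\otimes R$. Its right adjoint $\Fscr\otimes R$ is a $1$-morphism of $\Cat_S$, hence $\Oscr_S$-linear, and its unit is the image of $\eta$, hence still invertible, so $\Fscr\otimes i$ is fully faithful with $\Oscr_S$-linear right adjoint, i.e.\ right-admissible. The left-admissible case is identical with the adjunction $(L\dashv i)$ of Proposition~\ref{prop:all-equal}, whose counit $Li\to\id_\Ascr$ is invertible precisely because $i$ is fully faithful; and when $i$ is admissible one transports both adjunctions simultaneously.

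The one point requiring care --- and the main obstacle --- is the claim that $\Fscr\otimes_{\Perfscr(S)}(-)$ is a functor of $(\infty,2)$-categories rather than merely of $\infty$-categories; concretely, that the $(\infty,2)$-structure on $\Cat_\infty^\perf$ is compatible with the relative tensor product defining $\Cat_S$. If one wishes to sidestep this, an alternative is to lean on the rigidity of $\Perfscr(S)$ --- already used in the proof of Proposition~\ref{prop:all-equal} --- together with the formalism of dualizable presentable categories as in \cite{hss1}, under which the adjunction data $(i\dashv R)$ becomes a dualizable input that base-changes on the nose. Either way, once the adjunction has been transported, full faithfulness of $\Fscr\otimes i$ is automatic from the invertibility of the (transported) unit, and no higher cocycle data needs to be checked by hand.
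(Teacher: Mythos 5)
Your argument is correct in outline, but it takes a genuinely different route from the paper. The paper's proof is elementary and computational: it only uses $1$-functoriality of $\Fscr\otimes_{\Perfscr(S)}(-)$ to produce the candidate unit $\id\rightarrow r_\Fscr\circ i_\Fscr$, then verifies the adjunction criterion directly on mapping spectra, using the identification $\ShMap_{\Fscr\otimes\Cscr}(x\otimes i(y),w\otimes z)\we\ShMap_\Fscr(x,w)\otimes_{\Oscr_S}\ShMap_\Cscr(i(y),z)$ for pure tensors and a thick-subcategory generation argument to conclude for all objects; full faithfulness then falls out of the invertibility of the base-changed unit, exactly as you say. You instead transport the whole adjunction $(i\dashv R)$ along $\Fscr\otimes_{\Perfscr(S)}(-)$ viewed as a functor of $(\infty,2)$-categories. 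That is a legitimate and arguably cleaner strategy --- it delivers the right adjoint, its $\Oscr_S$-linearity, and all coherences at once, and it avoids the pure-tensor computation --- and it is in the spirit of the $2$-categorical technology the paper does use later (the $\mathrm{Adj}$-classifying argument via Riehl--Verity in the proof of Theorem~\ref{thm:et-sod}). The trade-off is that your approach front-loads a nontrivial input, the $(\infty,2)$-functoriality of relative tensoring, which the paper deliberately avoids needing here.

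One soft spot: your justification of that input via ``the bar construction commutes with tensoring'' is not really the point --- preservation of colimits gives $1$-functoriality, not action on $2$-morphisms. The clean argument is that $\Cat_S=\Mod_{\Perfscr(S)}(\Cat_\infty^\perf)$ is closed symmetric monoidal, with internal hom the $\Oscr_S$-linear exact functor category; hence $\Fscr\otimes_{\Perfscr(S)}(-)$ is canonically enriched over $\Cat_S$, i.e.\ an $(\infty,2)$-functor, and such functors preserve adjunctions and invertibility of units. (Equivalently, your fallback via rigidity of $\Perfscr(S)$ and \cite{hss1} works.) With that replacement your proof is complete and would serve as a valid alternative to the one in the paper.
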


\begin{proof}
    Let $r\colon\Cscr\rightarrow\Ascr$ be the right adjoint to $i$. To say that $r$ and $i$
    are adjoint is the same as giving a natural transformation $\id_\Ascr\rightarrow r\circ i$ such that
    for each $a\in\Ascr$ and $b\in\Cscr$ the induced composition
    $$\eta_{a,b}\colon\ShMap_\Cscr(i(a),b)\rightarrow\ShMap_\Ascr(ri(a),r(b))\rightarrow\ShMap_\Ascr(a,r(b))$$
    is an equivalence.
    Moreover, if $i$ and $r$ are adjoint, $i$ is fully faithful if and only if the unit
    natural transformation is an equivalence.

    Let
    $i_\Fscr\colon\Fscr\otimes_{\Perfscr(S)}\Ascr\rightarrow\Fscr\otimes_{\Perfscr(S)}\Cscr$
    and
    $r_\Fscr\colon\Ascr\otimes_{\Perfscr(S)}\Cscr\rightarrow\Fscr\otimes_{\Perfscr(S)}\Ascr$
    be the maps induced by $i$ and $r$ by functoriality of the tensor product.
    We have an induced natural isomorphism $\id_{\Fscr\otimes_{\Perfscr(S)}\Ascr}\rightarrow
    r_\Fscr\circ i_\Fscr$ (since $\id_\Ascr\we r\circ i$). Thus, to prove the lemma, it is
    enough to prove that $i_\Fscr$ and $r_\Fscr$ are adjoint, since then fully faithfulness
    then follows from the discussion above.

    Consider for $a\in\Fscr\otimes_{\Perfscr(S)}\Ascr$ and
    $b\in\Fscr\otimes_{\Perfscr(S)}\Cscr$ the induced map
    \begin{equation}\label{eq:adj}
        \ShMap_{\Fscr\otimes\Cscr}(i_\Fscr(a),b)\rightarrow\ShMap_{\Fscr\otimes\Ascr}(r_\Fscr i_\Fscr(a),r_\Fscr(b))\rightarrow\ShMap_{\Fscr\otimes\Ascr}(a,r_\Fscr(b)).
    \end{equation}
    We would like to show that this is an equivalence. Suppose that $a=x\otimes y$ and
    $b=w\otimes z$ are pure tensors, i.e., $x,w\in\Fscr$, $y\in\Ascr$, and $z\in\Cscr$.
    Then, we find the map is equivalent to $\ShMap_\Fscr(x,w)\otimes\eta_{y,z}$ since, for
    example, $$\ShMap_{\Fscr\otimes\Cscr}(i_\Fscr(x\otimes y),w\otimes
            z)\we\ShMap_{\Fscr\otimes\Cscr}(x\otimes i(y),w\otimes
                z)\we\ShMap_\Fscr(x,w)\otimes_{\Oscr_Y}\ShMap_\Cscr(i(y),z).$$ Since
    $\eta_{y,z}$ is an equivalence, we see that~\eqref{eq:adj} is an equivalence for pure
    tensors. Since the pure tensors generate $\Fscr\otimes_{\Perfscr(S)}\Ascr$ and
    $\Fscr\otimes_{\Perfscr(S)}\Cscr$, a standard thick subcategory argument proves that~\eqref{eq:adj} is an
    equivalence for all $a\in\Fscr\otimes\Ascr$ and $b\in\Fscr\otimes\Cscr$.
\end{proof}

\begin{proof} [Proof of Theorem~\ref{thm:et-sod}] 
        The necessity of fppf local admissibility follows immediately from the stability of admissibility under base change by 
        Proposition~\ref{prop:bcadmissible} with $\Fscr=\Perfscr(U)$.
        It suffices to prove the converse for right-admissibility, since
        $\Ascr \subseteq\Cscr$ is left-admissible if and only if
        $\Ascr^{\op}\subseteq\Cscr^{\op}$ is right-admissible.

        Let $p\colon U \rightarrow S$ be an fppf cover and assume that
        $\Ascr_{U}\rightarrow\Cscr_{U}$ admits a right adjoint $R$.
        Let $\check{C}_\bullet(p)$ be the simplicial algebraic stack obtained by taking the \v{C}ech
        complex of $p$; i.e., $\check{C}_n(p)\iso U^{\times_S(n+1)}$. Taking
        $\Perfscr(\check{C}_\bullet(p))$ we obtain a cosimplicial $\Oscr_S$-linear category with
        $\Perfscr(\check{C}_n(p))\we\Perfscr(U^{\times_S(n+1)})$. Tensoring with the
        inclusion $i\colon\Ascr\rightarrow\Cscr$, we obtain a natural transformation
        $$\Ascr_{\check{C}_\bullet(p)}\rightarrow\Cscr_{\check{C}_\bullet(p)}$$ of
        cosimplicial $\Oscr_S$-linear categories.

        For simplicity, write $U_n=U^{\times_S(n+1)}$, the $n$th term in
        $\check{C}_\bullet(f)$. By Proposition~\ref{prop:bcadmissible}, each
        $i^n\colon\Ascr_{U_n}\rightarrow\Cscr_{U_n}$ admits a right adjoint, say $r^n$.

        \paragraph{Claim.}
        For each $q\colon[m]\rightarrow [n]$ in $\Delta$, the commutative square
        $$\xymatrix{
            \Ascr_{U_n}\ar[r]^{i^n}\ar[d]^{q_\Ascr^*}&\Cscr_{U_n}\ar[d]^{q_\Cscr^*}\\
            \Ascr_{U_m}\ar[r]^{i^m}&\Cscr_{U_m}
        }$$
        is right adjointable, i.e., the induced natural transformation $$q_\Ascr^*\circ
        r^n\rightarrow r^m\circ q_\Cscr^*$$ is an equivalence.\footnote{This natural
        transformation is called the Beck--Chevalley transformation and is
        constructed for example in~\cite[Definition~4.7.4.13]{ha}.} Indeed, this follows
        from Lemma~\ref{lem:etale-adj} below.

       To prove that $i$ admits a right adjoint, it suffices to check the object-wise criterion to be an adjoint
        by~\cite[5.2.7.8]{htt}.
        Namely, it is enough to show that for each $x\in\Cscr$ there exists an
        element $y\in\Ascr$ and a map $y\rightarrow x$ such that for each
        $w\in\Ascr$ the natural map
        $\ShMap_{\Ascr}(w,y)\rightarrow\ShMap_\Cscr(i(w),x)$ is an equivalence.

        By unstraightening, we can view the functor
        $\Ascr_{\check{C}_\bullet(p)}\colon\Delta\rightarrow\ShCat(S)$ as classifying a
        Cartesian fibration
        $$\widetilde{\Ascr}=\int_\Delta\Ascr_{\check{C}_\bullet(p)}\rightarrow\Delta^{\op}.$$
        Given $x\in\Cscr$ as above, the Beck--Chevalley transformations give $r^\bullet x$
        the structure of a section $r(x)$ of $\widetilde{\Ascr}$.\footnote{One way to make this precise
        is to use the classifying $2$-category of adjunctions, $\mathrm{Adj}$. Specifically,
        viewing $\ShCat(S)$ as a $2$-category where the mapping categories are the
            $\infty$-categories
            $\Fun^{\ex}(-,-)$ of exact $\Oscr_S$-linear functors, we have a forgetful
            functor $\Fun(\mathrm{Adj},\ShCat(S))\rightarrow\Fun(\Delta^1,\ShCat(S))$, where
            on the left $\Fun(\mathrm{Adj},\ShCat(S))$ is the $\infty$-category of
            $2$-categorical functors from $\mathrm{Adj}$ to $\ShCat(S)$. We note that the morphisms in $\Fun(\mathrm{Adj},\ShCat(S))$ are exactly given by squares which are pointwise adjointable \cite[Appendix A]{bispans}.
            Now, the theorem of
            Riehl--Verity~\cite{riehl-verity} implies that this functor is fully faithful
            with essential image exactly those objects of $\Fun(\Delta^1,\ShCat(S))$
                    possessing an adjoint. Now, we can view $i$ as defining a functor
                    $\Delta\rightarrow\Fun(\Delta^1,\ShCat(S))$. The pointwise
                        adjointability, implies that this functor factors through the subcategory to
                        give a functor $\Delta\rightarrow\Fun(\mathrm{Adj},\ShCat(S))$. If
        $\widetilde{\Cscr}\rightarrow\Delta^\op$ denotes the unstraightening of $\Cscr_{\check{C}_\bullet(p)}$,
        then this functor gives a functor $\widetilde{\Cscr}\rightarrow\widetilde{\Ascr}$ of
        $\infty$-categories over $\Delta^\op$. The object $x$ defines a (Cartesian) section
        of $\widetilde{\Cscr}\rightarrow\Delta^\op$ and we apply the functor to get a
        section of $\widetilde{\Ascr}\rightarrow\Delta$.} In other words, $r(x)$ is a kind of lax
        cosimplicial object. But, the claim above, that the squares are right adjointable,
        implies that this section is in fact Cartesian. Thus, $r(x)$ defines an object of
        $\Ascr$, which is equivalent to the $\infty$-category of Cartesian sections of
        $\widetilde{\Ascr}\rightarrow\Delta^\op$ by~\cite[Corollary~3.3.3.2]{htt}.
      
    We also have a map $ir(x)\rightarrow x$. Fix $w\in\Ascr$.
    We have
        \begin{eqnarray*}
        \ShMap_{\Ascr}(w,r(x)) & \we &
        \lim_{\Delta}\ShMap_{\Ascr_{U_\bullet}}(w_{U_\bullet},r^{\bullet}(x_{U_\bullet}))\\
         & \we &
         \lim_\Delta\ShMap_{\Cscr_{U_\bullet}}(\iota^{\bullet}(w_{U_\bullet}),x_{U_\bullet})\\
         & \we & \ShMap_\Cscr(i(w),x),
        \end{eqnarray*}
        as desired. 
\end{proof}

\begin{lemma} \label{lem:etale-adj} Suppose that $f: U \rightarrow S$ be a
    morphism of schemes. Let $F: \Dscr \rightarrow \Cscr$ be a functor of
    $\Oscr_S$-linear categories and consider the diagram
    \[
        \begin{tikzcd}
        \Dscr \ar{r}{F} \ar[swap]{d}{f_{\Dscr}^*} & \Cscr \ar{d}{f_{\Cscr}^*} \\
        \Dscr_{U} \ar{r}{F_U} & \Cscr_{U}.
        \end{tikzcd}
        \]
    If $F$ admits a right adjoint $G$, then the square above is right adjointable.             
\end{lemma}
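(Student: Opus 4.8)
The plan is to derive the lemma from the fact that base change is an $(\infty,2)$-functor, which makes the relevant Beck--Chevalley transformation an equivalence essentially for free. First I would record that, since $\Perfscr(S)$ is rigid symmetric monoidal, the right adjoint $G$ is automatically $\Oscr_S$-linear, exactly as in the proof of Proposition~\ref{prop:all-equal} via~\cite[Proposition~4.9(3)]{hss1}. Hence $F\dashv G$ is an adjunction inside the $(\infty,2)$-category $\Cat_S$ whose mapping $\infty$-categories are the $\Fun^\ex$ of exact $\Oscr_S$-linear functors recalled earlier in the paper.

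Next I would set up the two base-change functors. Tensoring with $\Perfscr(U)$ refines to an $(\infty,2)$-functor $f^\ast=(-)\otimes_{\Perfscr(S)}\Perfscr(U)\colon\Cat_S\rightarrow\Cat_U$, and restriction of scalars along $\Perfscr(S)\rightarrow\Perfscr(U)$ is an $(\infty,2)$-functor $g\colon\Cat_U\rightarrow\Cat_S$ which is right $2$-adjoint to $f^\ast$. The unit $\eta\colon\id_{\Cat_S}\Rightarrow g\circ f^\ast$ is then a $2$-natural transformation whose component at an $\Oscr_S$-linear category $\Escr$ is the base-change functor $\Escr\rightarrow\Escr_U$ viewed as $\Oscr_S$-linear; in particular $\eta_\Dscr=f_\Dscr^\ast$ and $\eta_\Cscr=f_\Cscr^\ast$. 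Because $f^\ast$ is an $(\infty,2)$-functor it preserves the adjunction $F\dashv G$, so $F_U=f^\ast F$ automatically acquires a right adjoint $G_U=f^\ast G$ --- this is also what makes the phrase ``right adjointable'' meaningful in the statement --- and applying the $(\infty,2)$-functor $g$ preserves this adjunction as well.

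The key step is then to recognize the square in question as the naturality square of the $2$-natural transformation $\eta$ at the morphism $F$, and to invoke that naturality squares at morphisms admitting right adjoints are automatically right adjointable. Concretely, let $\mathrm{Adj}$ be the free adjunction~\cite{riehl-verity}, so that $(\infty,2)$-functors $\mathrm{Adj}\rightarrow\Cscr$ are adjunctions in $\Cscr$; the adjunction $F\dashv G$ is classified by a $2$-functor $A\colon\mathrm{Adj}\rightarrow\Cat_S$, and whiskering $\eta$ with $A$ yields a morphism $\eta\ast A$ in $\Fun(\mathrm{Adj},\Cat_S)$ from $A$ to $g\circ f^\ast\circ A$. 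By~\cite[Appendix~A]{bispans} a morphism in $\Fun(\mathrm{Adj},\Cat_S)$ is precisely a commuting square of left adjoints together with its mate square of right adjoints. Unwinding $\eta\ast A$, the left-adjoint square is exactly the square in the statement of the lemma, so it is right adjointable, and its mate square records the equivalence $f_\Dscr^\ast\circ G\simeq G_U\circ f_\Cscr^\ast$, which is the assertion.

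The main obstacle is foundational rather than conceptual: one needs that $\Cat_S$ and $\Cat_U$ carry their $(\infty,2)$-categorical structures, that base change and restriction of scalars refine to $(\infty,2)$-functors forming a $2$-adjunction, and that the free adjunction $\mathrm{Adj}$ together with the dictionary ``morphisms in $\Fun(\mathrm{Adj},-)$ $=$ pointwise adjointable squares'' behaves in the $\infty$-categorical world as in the strict $2$-categorical one. The $\Fun^\ex$-enrichment is already recalled in the paper, and~\cite{riehl-verity} and~\cite[Appendix~A]{bispans} supply the rest, but assembling these carefully --- in particular pinning down the $2$-adjunction $f^\ast\dashv g$ --- is where the actual work lies.
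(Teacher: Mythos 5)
Your proposal is correct in outline, but it takes a genuinely different route from the paper. The paper's own proof is a hands-on verification: it chooses generators $\{y_i\}$ of $\Dscr$, observes that $\{y_i\otimes\Oscr_U\}$ generates $\Dscr_U$, and checks that the Beck--Chevalley map is an equivalence by computing mapping spectra, the only nontrivial input being the base-change formula $\ShMap_{\Dscr_U}(y\otimes\Oscr_U,z\otimes\Oscr_U)\we\ShMap_\Dscr(y,z)\otimes\Oscr_U$ as in \cite[Lemma~2.7]{ag}. You instead recognize the square as the unit naturality square of a base-change/restriction-of-scalars $2$-adjunction and invoke the mate calculus: for a genuinely $2$-natural (not merely lax) transformation, the mate of the naturality constraint at a left adjoint is the naturality constraint at the right adjoint, hence invertible; your $\mathrm{Adj}$-whiskering argument is the $\infty$-categorical incarnation of this, and it is not circular precisely because $\eta$ is assumed $2$-natural. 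Your route buys more: it produces the right adjoint $G_U\we f^*G$ for free (recovering the adjunction half of Proposition~\ref{prop:bcadmissible} formally) and works for any adjunction over any morphism of $1$-affine bases, whereas the paper's computation is elementary and needs no $(\infty,2)$-categorical foundations beyond what is already implicit. The cost, which you correctly identify, is the foundational package --- the $\Fun^\ex$-enriched $2$-structure on $\Cat_S$, the $2$-functoriality of $-\otimes_{\Perfscr(S)}\Perfscr(U)$ and restriction of scalars, the $2$-naturality of the unit, and the $\mathrm{Adj}$/adjointable-squares dictionary of \cite{riehl-verity} and \cite[Appendix~A]{bispans}; the paper itself only deploys this machinery later, in the footnoted cosimplicial argument of Theorem~\ref{thm:et-sod}, so your approach is consistent with its toolkit but shifts the burden from a short computation to establishing that package carefully.
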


\begin{proof}
    Suppose that $\{ y_i \}$ is a collection of objects in $\Dscr$ which
    generates $\Dscr$. Then the collection $\{ y_i \otimes \Oscr_U \}$ generates $\Dscr_U$. Hence to
    prove the claim, it suffices to prove that the canonical map
    \[
    \ShMap_{\Dscr_{U}}(y_i \otimes \Oscr_U, f^*_{\Dscr}Gx) \rightarrow
    \ShMap_{\Dscr_{U}}(y_i \otimes \Oscr_U, G_{U}f^*_{\Cscr}x),
    \]
    is an equivalence. This follows from the following computation
    \begin{eqnarray*}
    \ShMap_{\Dscr_{U}}(y_i \otimes \Oscr_U, f^*_{\Dscr}Gx) & = &
        \ShMap_{\Dscr_{U}}(y_i \otimes \Oscr_U, Gx \otimes \Oscr_U) \\
    & \simeq &  \ShMap_{\Dscr}(y_i, Gx ) \otimes \Oscr_U\\
    & \simeq &  \ShMap_{\Cscr}(Fy_i, x )  \otimes \Oscr_U\\
    & \simeq & \ShMap_{\Cscr_U}(F_Uy_i, x \otimes \Oscr_U )\\
    & \simeq & \ShMap_{\Dscr_{U}}(y_i \otimes \Oscr_U, G_{U}f^*_{\Cscr}x).
    \end{eqnarray*}
    Here, the only nontrivial step is the equivalence in the second line which follows from the
    same argument as in \cite[Lemma 2.7]{ag}.
\end{proof}

\begin{definition}\label{def:sodstack}
    Let $P$ be a poset. We denote by $\mathbf{Sod}_P$ the subprestack of
    $\mathbf{Filt}_P$ spanned by the $P$-shaped
    semiorthogonal decompositions. We call the prestack $\mathbf{Sod}_P$ the {\bf stack of
    $P$-shaped semiorthogonal decompositions}. 
\end{definition}

\begin{remark}
    Proposition~\ref{prop:bcadmissible} implies that $\mathbf{Sod}_P$ is indeed
    a prestack.
\end{remark}

Now, we see that $\mathbf{Sod}_P$ is a stack, which proves Theorem~\ref{thm:sods}.

\begin{corollary} \label{cor:sod-desc} The prestack $\mathbf{Sod}_P$ is an fppf stack.
\end{corollary}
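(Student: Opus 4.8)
The plan is to deduce Corollary~\ref{cor:sod-desc} directly from Theorem~\ref{thm:desc}(i) together with Theorem~\ref{thm:et-sod}, using the fact that $\mathbf{Sod}_P$ sits inside the already-established fppf stack $\mathbf{Filt}_P$ (or more precisely the stack of admissible exhaustive $P$-shaped filtrations) as a subprestack cut out by a condition that is fppf-local.

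First I would recall that by Theorem~\ref{thm:desc}(i) the prestack $\mathbf{Filt}_P$ is an fppf stack, and similarly the exhaustive variant $\mathbf{ExFilt}_P$ (obtained by working over the universal $\Cscr$, or fiberwise as in the proof of Theorem~\ref{thm:desc}) is an fppf stack, since exhaustiveness is an fppf-local condition on the colimit comparison functor — this was verified inside the proof of Theorem~\ref{thm:desc} using~\cite[Corollary~6.11]{dag11}. So it remains to check that, among exhaustive $P$-shaped filtrations, the full sub-prestack of those which are semiorthogonal decompositions — i.e.\ those for which each arrow $p\to q$ gives an admissible inclusion $\F_p\Cscr\hookrightarrow\F_q\Cscr$ and each $\F_p\Cscr\subseteq\Cscr$ is admissible — is closed under the fppf-descent process.

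The key point is that since $\mathbf{ExFilt}_P$ is already a stack, to show the subprestack $\mathbf{Sod}_P$ is a stack it suffices to show that its defining property is fppf-local: if $\F_\star\Cscr\to\Cscr$ is an exhaustive $P$-shaped filtration over $\Spec R$ which becomes a semiorthogonal decomposition after base change along a faithfully flat finitely presented map $R\to R'$, then it was already a semiorthogonal decomposition over $R$. But this is exactly Theorem~\ref{thm:et-sod} applied to each inclusion: $\F_p\Cscr\hookrightarrow\F_q\Cscr$ is admissible if and only if $(\F_p\Cscr)_{R'}\hookrightarrow(\F_q\Cscr)_{R'}$ is, and likewise $\F_p\Cscr\subseteq\Cscr$ is admissible if and only if $(\F_p\Cscr)_{R'}\subseteq\Cscr_{R'}$ is. (Here one uses that base change commutes with the colimit defining $\F_\infty\Cscr$ and with the inclusions, which is the content of Proposition~\ref{prop:bcadmissible} in one direction and Theorem~\ref{thm:et-sod} in the other.) Concretely: given a descent datum of semiorthogonal decompositions for an fppf cover $U\to S$, the underlying filtered descent datum glues by Theorem~\ref{thm:desc}(i) to an exhaustive $P$-shaped filtration on $\Perfscr(S)$ (resp.\ on the relevant $\Cscr$), and then Theorem~\ref{thm:et-sod} upgrades each inclusion to an admissible one, so the glued filtration is itself a semiorthogonal decomposition.

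The only genuine obstacle is Theorem~\ref{thm:et-sod} itself — the local nature of admissibility — but that has already been proved in the excerpt, so the corollary is a formal consequence. A minor point to be careful about is matching conventions: one should note that for a filtered poset $P$ condition (2) in Definition~\ref{def:adm-fil} (admissibility of each $\F_p\Cscr\subseteq\Cscr$) is automatic given the arrow-wise admissibility and exhaustiveness, by~\cite[Proposition~4.4]{bondal-kapranov}, while for general $P$ one simply includes that condition in the definition of the subprestack and checks its fppf-locality by the same appeal to Theorem~\ref{thm:et-sod}. Either way, no new cocycle-type verification is needed: descent for the filtration carries the admissibility along for free.
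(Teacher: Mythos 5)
Your proof is correct and follows essentially the same route as the paper: the paper likewise observes that $\mathbf{Sod}_P$ sits inside the fppf stack $\mathbf{Filt}_P$ (Theorem~\ref{thm:desc}) as a union of connected components, so that descent reduces to the fppf-locality of admissibility, which is exactly Theorem~\ref{thm:et-sod}.
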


\begin{proof}
    For each $\Spec R\rightarrow S$, we have an inclusion of connected components 
    $\mathbf{Sod}_P(R)\subseteq\Filt_P(R)$. Since $\mathbf{Filt}_P$ is an fppf stack by
    Theorem~\ref{thm:desc}, it suffices to check the effectiveness of descent, which is precisely
    Theorem~\ref{thm:et-sod}.
\end{proof}

\begin{corollary} \label{cor:main-sod-version} 
    Let $P$ be a poset, $S$ a
    qcqs scheme. Let $G$ be a flat affine algebraic $S$-group scheme of finite
    presentation
    and let $X$ be a qcqs $S$-scheme with an action of $G$.
    Suppose that $\Perfscr(X)$ admits a semiorthogonal decomposition
    $\F_\star\Cscr\rightarrow\Cscr$. If $G$ preserves the filtration
    $\F_\star\Cscr\rightarrow\Cscr$, then the induced filtration
    $\F_\star\Perfscr([X/G])\rightarrow\Perfscr([X/G])$ is a $P$-shaped
    semiorthogonal decomposition.
\end{corollary}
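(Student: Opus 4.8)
The plan is to deduce Corollary~\ref{cor:main-sod-version} directly from Theorem~\ref{thm:main2-g} together with Corollary~\ref{cor:sod-desc} and the effectivity of semiorthogonal descent in Theorem~\ref{thm:et-sod}. First I would invoke Theorem~\ref{thm:main2-g}: since $G$ preserves the filtration $\F_\star\Perfscr(X)\rightarrow\Perfscr(X)$, there is an induced filtration $\F_\star\Perfscr([X/G])\rightarrow\Perfscr([X/G])$, obtained concretely by equipping $\F_\star\Perfscr(X)\rightarrow\Perfscr(X)$ with its $G$-equivariant structure (a point of $\ShFilt_P(\ShB G)$ over $\Perfscr(X)\in\ShCat(\ShB G)$) and then passing to homotopy $G$-fixed points. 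So the only thing left to check is that this induced filtration is actually a $P$-shaped semiorthogonal decomposition, i.e.\ that it lies in $\mathbf{Sod}_P([X/G])\subseteq\mathbf{Filt}_P([X/G])$.

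The key step is to observe that $[X/G]\rightarrow S$ is covered, in the fppf topology, by $X\rightarrow [X/G]$ (using that $G$ is flat affine of finite presentation, so $X\rightarrow [X/G]$ is an fppf cover, and $[X/G]$ is $1$-affine by Warning~\ref{warn:1-aff}(c)). Pulling the induced filtration back along $X\rightarrow [X/G]$ recovers, by construction of the homotopy fixed points and the fact that $\Perfscr$ satisfies descent, the original filtration $\F_\star\Perfscr(X)\rightarrow\Perfscr(X)$ on $\Perfscr(X)$, which is a semiorthogonal decomposition by hypothesis. In particular, for each arrow $p\rightarrow q$ in $P$, the inclusion $\F_p\Perfscr([X/G])\hookrightarrow\F_q\Perfscr([X/G])$ becomes admissible after base change along the fppf cover $X\rightarrow[X/G]$, and similarly each $\F_p\Perfscr([X/G])\subseteq\Perfscr([X/G])$ is fppf-locally admissible; exhaustivity is likewise fppf-local by the argument already used in the proof of Theorem~\ref{thm:desc}. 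Applying Theorem~\ref{thm:et-sod} (the local nature of admissibility) to each of these inclusions with the cover $X\rightarrow[X/G]$ then shows that the induced filtration is admissible and that its pieces are admissible in $\Perfscr([X/G])$; combined with exhaustivity this is exactly the assertion that $\F_\star\Perfscr([X/G])\rightarrow\Perfscr([X/G])$ is a $P$-shaped semiorthogonal decomposition.

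Alternatively, and perhaps more cleanly, one can phrase the whole argument at the level of stacks: the $G$-equivariant filtration produced above is a point of $\mathbf{Filt}_P(\ShB G)$ which pulls back along $\Spec S\rightarrow\ShB G$ to a point of $\mathbf{Sod}_P(S)$ (namely the given semiorthogonal decomposition of $\Perfscr(X)$, after the further base change implicit in $X\rightarrow[X/G]$ corresponding to $S\rightarrow\ShB G$). Since $\mathbf{Sod}_P\subseteq\mathbf{Filt}_P$ is an fppf stack by Corollary~\ref{cor:sod-desc} and $\Spec S\rightarrow\ShB G$ is an fppf cover, the point of $\mathbf{Filt}_P(\ShB G)$ already lies in $\mathbf{Sod}_P(\ShB G)$. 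Evaluating at the quotient $[X/G]$ — equivalently, taking homotopy $G$-fixed points — then yields a point of $\mathbf{Sod}_P([X/G])$, which is the desired semiorthogonal decomposition.

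I expect the main (though modest) obstacle to be bookkeeping: one must be careful that the filtration on $\Perfscr([X/G])$ produced by Theorem~\ref{thm:main2-g} via homotopy fixed points is genuinely the same object one analyzes via the fppf cover $X\rightarrow[X/G]$, i.e.\ that forming homotopy $G$-fixed points commutes appropriately with pullback to $X$ and with the base change operations $\Cscr\mapsto\Cscr_T$ appearing in the statement of Theorem~\ref{thm:et-sod}. This is precisely the kind of compatibility encoded by the fact that $\mathbf{Sod}_P$ and $\mathbf{Filt}_P$ are honest fppf stacks, so once one commits to the stacky formulation of the previous paragraph there is essentially nothing left to do; the real content has already been extracted in Theorems~\ref{thm:main2-g} and~\ref{thm:et-sod}.
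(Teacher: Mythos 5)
Your proposal is correct and follows the paper's own route: the paper likewise obtains the induced filtration from Theorem~\ref{thm:main2-g} and then concludes that it is a semiorthogonal decomposition by applying Theorem~\ref{thm:et-sod} (equivalently, the stack property of Corollary~\ref{cor:sod-desc}) to the fppf cover $X\rightarrow[X/G]$, along which the filtration pulls back to the given one. Your write-up merely makes explicit the admissibility, exhaustivity, and $1$-affineness checks that the paper leaves implicit in its one-line argument.
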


\begin{proof}
    This follows immediately from Theorem~\ref{thm:et-sod} since
    $X\rightarrow[X/G]$ is an fppf cover.
\end{proof}

\begin{remark}
    Under the equivalence given by Proposition~\ref{prop:sod-adm} between our form of semiorthogonal decompositions and
    the usual form, Corollary~\ref{cor:main-sod-version} gives a very general
    form of Elagin's theorem.
\end{remark}


\section{The twisted Brauer space for filtrations} \label{sec:tbs}

We introduce in this section tools for constructing obstruction classes in cohomology
attached to filtrations.

\subsection{Twisted Brauer spaces: recollections}

Throughout, $S$ will be a $1$-affine algebraic stack and $\Cscr$ will be an $\Oscr_S$-linear
category. The twisted Brauer space of $\Cscr$ is a topological
space (or, really, simplicial set) $\ShBr^\Cscr(S)$ whose points classify
idempotent complete $S$-linear stable $\infty$-categories $\Dscr$ which are
fppf locally on $S$ equivalent to $\Cscr$, i.e., a {\bf twisted form} of
$\Cscr$. Here are some salient features of twisted Brauer spaces; we refer to
\cite{antieau-tbs} for proofs. Note that~\cite{antieau-tbs} treats the \'etale
case; thanks to Theorem~\ref{thm:lurie}, the same results work for the fppf
analogue.

\begin{enumerate} 
\item[\rm (i)] If $\Cscr = \Perfscr(S)$, then $\ShBr^{\Cscr} \simeq \ShBr$ is
    the fppf version of the
Brauer space considered in~\cite{ag,ToenderAz}; in particular
\[
\pi_i\ShBr(S) \iso\begin{cases}
 \H^1_{\fppf}(S;\ZZ) \times \H^2_{\fppf}(S;\GG_m)&\text{if $i=0$,}\\
 \H^0_{\fppf}(S;\ZZ) \times \H^1_{\fppf}(S;\GG_m)&\text{if $i=1$,}\\
 \H^0_{\fppf}(S;\GG_m)&\text{if $i=2$, and}\\
 0&\text{otherwise,}
 \end{cases}
\]
where the higher homotopy groups are computed at the basepoint
        $\Perfscr(S)\in\ShBr(S)$.\footnote{Recall Grothendieck's
        theorem~\cite[Section 5, Th\'eor\`eme~11.7]{grothendieck-brauer-3} that
        for a smooth affine group scheme $G$ the natural map
        $\H^s_\et(S,G)\rightarrow\H^s_\fppf(S,G)$ is an isomorphism for all
        $s$.}
\item[\rm (ii)] There is a one-to-one correspondence between $\Oscr_S$-linear
    equivalence classes of
$\Oscr_S$-linear categories which are fppf locally equivalent to $\Cscr$ and the elements
of the set $\pi_0\ShBr^\Cscr(S)$. Given a point $\Dscr$ of
$\ShBr^\Cscr(S)$, the higher homotopy groups of the space $\ShBr^\Cscr(S)$ are
given by $$\pi_i(\ShBr^\Cscr(S),\Dscr)\iso\pi_{i-1}\ShAut_{\Dscr}(S),$$ where
        $\ShAut_\Dscr(S)$ is the space of derived $\Oscr_S$-linear autoequivalences of $\Dscr$. 
\item[\rm (iii)] The space $\ShAut_\Dscr(S)$ is the space of global sections of a sheaf of
spaces $\ShAut_\Dscr$. If $S=\Spec R$ is affine, then the higher homotopy groups of this
space are well-understood:
$$\pi_{i-1}\ShAut_\Dscr(S)\iso\begin{cases}
    \HH^0(\Dscr/S)^\times&\text{$i=2$,}\\
    \HH^{2-i}(\Dscr/S)&\text{$i\geq 3$,}
\end{cases}$$
where $\HH(\Dscr/S)$ denotes the Hochschild cohomology of $\Dscr$ relative to
$S=\Spec R$. This is a result of  To\"en's~\cite[Corollary~1.6]{toen-dg}. For
non-affine $S$, one computes the space $\ShAut_\Dscr(S)$ via a local-global spectral sequence. 
Note that if $\Dscr\we\Perfscr(X)$ for some scheme $X$, then
$\HH^{2-i}(\Dscr/S)=0$ for $i\geq 3$: schemes do not have non-zero
negative Hochschild cohomology groups.
\item[\rm (iv)] the twisted Brauer space is the space of $S$-sections of an
    fppf sheaf 
\[
    \ShBr^\Cscr\colon S_{\fppf} \rightarrow \Sscr.
\]
Since every point of $\ShBr^\Cscr$ is fppf-locally equivalent to
$\Cscr$, the sheaf $\ShBr^\Cscr$ is connected as a sheaf of
spaces. Thus, the twisted Brauer space is the classifying stack of the
sheaf of groups $\ShAut_{\Cscr}$, formalizing the way in which it classifies
twisted forms of $\Cscr$. Here, the sheaf of groups should be understood in the
        homotopical context: $\ShAut_\Cscr$ is like a sheaf of $H$-spaces and
        is precisely a sheaf of what are called grouplike $A_\infty$-spaces.
        Using this perspective, one can often enumerate twisted
forms using descent spectral sequences; see \cite{antieau-tbs} for some
examples.
\end{enumerate}

\subsection{Twisted Brauer spaces for filtrations}

We now consider a variant of twisted Brauer spaces where $\Cscr$ is equipped
with a filtration.

\begin{definition}
    Let $P$ be a poset and let $\F_\star\Cscr\rightarrow\Cscr$ be a $P$-shaped
    filtration on $\Cscr$. We let $\ShBr^{\F_\star\Cscr}$ be the fppf
    sheafification of the corresponding point of $\iota\ShFilt_P$. This is the
    {\bf twisted Brauer space of the filtration}
    $\F_\star\Cscr\rightarrow\Cscr$.

    Similarly, if $(\F_\star\Cscr\rightarrow\Cscr,M_*)$ is a marked $P$-shaped
    filtration, we let the {\bf twisted Brauer space of the marked filtration}
    $\ShBr^{(\F_\star\Cscr,M_*)}$ be the fppf sheafification of the corresponding
    point in $\iota\mathbf{MFilt}_P$.
\end{definition}

\begin{remark}\label{rem:zero}
    We remark that setting $M_i = 0$ for all $i \in P$ is equivalent to having
    no markings on the filtration so that $\ShBr^{\F_\star\Cscr}\simeq
    \ShBr^{(\F_\star\Cscr,M_*)}$ in this case.
\end{remark}

By construction both $\ShBr^{\F_\star\Cscr}$ and
$\ShBr^{(\F_\star\Cscr,M_*)}$ are fppf sheaves on the big
site which, \emph{a priori}, take values in the $\infty$-category of large
spaces.
We will see in fact that they are sheaves of small spaces.

We record some basic properties of these gadgets bearing in mind Remark~\ref{rem:zero}.

\begin{proposition}\label{prop:main}
    Let $S$ be a $1$-affine algebraic stack, $\F_\star\Cscr\rightarrow\Cscr$ a
    $P$-shaped filtration, and $M_*$ a marking for the filtration.
    The fppf sheaves $\ShBr^{(\F_\star\Cscr,M_\ast)}$ satisfy the
    following properties.
    \begin{enumerate}
        \item[{\rm (a)}] For any qcqs $S$-scheme
            $T$, the space $\ShBr^{(\F_\star\Cscr,M_\ast)}(T)$ is the space of
            marked filtered idempotent complete $T$-linear stable
            $\infty$-categories $(\F_\star\Cscr,N_\ast)$ which are
            fppf-locally equivalent to $(\F_\star\Cscr\otimes_{\Perfscr(S)}\Perfscr(T),M_\ast\otimes_{\Oscr_S}\Oscr_T)$.
        \item[{\rm (b)}] Let $\ShAut_{(\F_\star\Cscr,M_\ast)}$
            denote the sheaf of automorphisms of the marked $P$-filtered stable
            $\infty$-category defined by $\F_\star\Cscr\rightarrow\Cscr$ and
            $M_*$. There is a natural equivalence
            $$\ShB\ShAut_{(\F_\star\Cscr,M_\ast)}\we\ShBr^{(\F_\star,M_\ast)}$$
            of fppf sheaves.
        \item[{\rm (c)}] There are forgetful maps  \[
           \ShBr^{(\F_\star\Cscr,M_\ast)}\rightarrow
            \ShBr^{\F_\star\Cscr} \rightarrow \ShBr^{\Cscr}
           \] of fppf sheaves. A
            twisted form $\Dscr$ of $\Cscr$ giving a point of
            $\ShBr^\Cscr(T)$ lifts to $ \ShBr^{\F_\star\Cscr}(T)$ (resp.
            $\ShBr^{(\\F_\star\Cscr,M_\ast)}(T)$) if
            and only if the (marked) filtration on $\Cscr$ descends to $\Dscr$.

            \item [{\rm (d)}] The morphism of sheaves
            $\ShBr^{\F_\star\Cscr}\rightarrow\ShBr^{\Cscr}$ is $0$-truncated, i.e, the
            fibers are fppf sheaves of sets.
            
            \item[{\rm (e)}] On sheaves of fundamental groups, the inclusion
            \[
            \pi_1(\ShBr^{\F_\star\Cscr}, \F_\star\Cscr) \hookrightarrow \pi_1(\ShBr^{\Cscr},\Cscr) \simeq \pi_0(\ShAut_{\Cscr}),
            \]
            corresponds to the inclusion of those automorphisms of $\Cscr$
            which preserve the filtration $\F_\star\Cscr$.
                    
    \end{enumerate}
\end{proposition}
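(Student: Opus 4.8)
The plan is to verify (a)--(e) mostly by unwinding the definitions, since the hard work has already been done: $\ShBr^{(\F_\star\Cscr,M_\ast)}$ is \emph{defined} as the fppf sheafification of the corresponding point of $\iota\mathbf{MFilt}_P$, and by Theorem~\ref{thm:desc}(ii) together with Remark~\ref{rem:shv} the presheaf $\iota\mathbf{MFilt}_P$ is already an fppf sheaf (valued, a priori, in large spaces). I would first record once and for all that $\ShBr^{(\F_\star\Cscr,M_\ast)}$ is the connected component of this sheaf containing the chosen point, by construction of sheafification; likewise $\ShBr^{\F_\star\Cscr}$ is the connected component of $\iota\mathbf{Filt}_P$ and $\ShBr^\Cscr$ the connected component of $\iota\mathbf{Cat}$. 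For part (a), evaluating a connected sheaf of spaces at a qcqs $T$ gives exactly those objects which become equivalent, fppf-locally on $T$, to the pullback of the base point; this is the standard identification of a connected fppf sheaf with the classifying stack of the automorphism sheaf of its base point, and it yields (b) simultaneously: $\ShBr^{(\F_\star\Cscr,M_\ast)}\we\ShB\ShAut_{(\F_\star\Cscr,M_\ast)}$ because any connected pointed fppf sheaf of spaces $\Xscr$ satisfies $\Xscr\we\ShB(\Omega\Xscr)=\ShB\ShAut$. The one subtlety to address is smallness: since $S$ is $1$-affine and qcqs, the automorphism sheaf of a fixed $\Oscr_S$-linear category is a small sheaf by the computations recalled in \S4.1 (To\"en's Hochschild-cohomology description), and likewise the marked and filtered automorphism sheaves are small because they sit inside the unmarked one via pullback diagrams; hence all three Brauer spaces take values in small spaces.

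For (c), the forgetful maps come from sheafifying the forgetful maps of prestacks $\mathbf{MFilt}_P\rightarrow\mathbf{Filt}_P\rightarrow\mathbf{Cat}$, restricted to the relevant connected components. A point $\Dscr\in\ShBr^\Cscr(T)$ lifts to $\ShBr^{\F_\star\Cscr}(T)$ precisely when there is a $P$-shaped filtration on $\Dscr$ which becomes identified, fppf-locally on $T$, with the pullback of $\F_\star\Cscr$; but ``fppf-locally a twist of $\F_\star\Cscr$'' over a $T$ already equipped with a trivialization of the $\Cscr$-twist just says the filtration descends to $\Dscr$, which is the assertion. The marked case is identical with $M_\ast$ carried along.

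Parts (d) and (e) are where I would lean on the earlier structural results rather than reprove anything. For (d), the fiber of $\ShBr^{\F_\star\Cscr}\rightarrow\ShBr^{\Cscr}$ over the base point $\Cscr$ is the sheafification of the fiber of $\iota\mathbf{Filt}_P\rightarrow\iota\mathbf{Cat}$ over $\Cscr$, which is $\iota\mathbf{ExFilt}_P^\Cscr$; by Corollary~\ref{cor:0-tr} this is already a sheaf of sets, and sheafification of a $0$-truncated object is $0$-truncated, so the map is $0$-truncated. (More invariantly: Proposition~\ref{prop:fib-disc} says all fibers of $\mathbf{Filt}_P\rightarrow\mathbf{Cat}$ are posets, so the same holds after sheafification.) For (e), apply the long exact sequence of the fibration of pointed connected sheaves $\ShB\ShAut_{\F_\star\Cscr}\rightarrow\ShB\ShAut_{\Cscr}$: since the fiber $\iota\mathbf{ExFilt}_P^\Cscr$ is $0$-truncated, $\pi_1$ of the fiber vanishes, so $\pi_1(\ShBr^{\F_\star\Cscr})\to\pi_1(\ShBr^{\Cscr})$ is injective, identifying $\pi_1(\ShBr^{\F_\star\Cscr})$ with $\pi_0\ShAut_{\F_\star\Cscr}\subseteq\pi_0\ShAut_\Cscr$; and an automorphism of $\Cscr$ lies in this subsheaf exactly when it is in the image of $\ShAut_{\F_\star\Cscr}$, i.e.\ when it preserves $\F_\star\Cscr$ — which is precisely the content of the diagram~\eqref{lift-0} analysis in Lemma~\ref{prop:g-lift}. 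The main obstacle, such as it is, is purely bookkeeping: being careful that sheafification commutes with the formation of these connected components and of fibers over the base point, and that the ``space of autoequivalences'' appearing in \S4.1 agrees with $\Omega$ of the relevant connected component; none of this requires new input beyond Theorem~\ref{thm:desc}, Corollary~\ref{cor:0-tr}, and the recollections on $\ShBr^\Cscr$.
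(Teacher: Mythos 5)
Your proposal is correct and follows essentially the same route as the paper: identifying each $\ShBr$ with the subsheaf of fppf-locally trivial forms for (a)--(b) (hence $\ShB$ of the automorphism sheaf), treating (c) as definitional, and deducing (d) and (e) from the discreteness of the fibers (Lemma~\ref{lem:0category}/Corollary~\ref{cor:0-tr}) together with the long exact sequence of the fiber sequence. Your use of $\pi_1$ of the $0$-truncated fiber to get injectivity in (e) is the intended argument, so no further comment is needed.
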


\begin{proof} 
    \begin{enumerate}
        \item[{\rm (a)}] Let $\Gscr \subset \mathbf{MFilt}_P$ be the
            subpresheaf classifying objects $(\F_\star\Dscr \rightarrow \Dscr,
            M_{\ast})$ which are fppf locally equivalent to $(\F_\star\Cscr
            \rightarrow \Cscr, M_\ast)$. By definition of
            $\ShBr^{(\F_\star\Cscr,M_*)}$ the map
            $\ShBr^{(\F_\star\Cscr,M_*)}
            \rightarrow\mathbf{MFilt}_P$ factors through the
            inclusion of $\Gscr$. Furthermore, the subpresheaf $\Gscr$ is
            actually an fppf sheaf since the condition of a section of $
            \mathbf{MFilt}_P$ being in $\Gscr$ is fppf local. Since, fppf
            locally, the map $\ShBr^{(\F_\star\Cscr,M_*)}
            \rightarrow \Gscr$ is an equivalence on sections we get an
            equivalence of fppf sheaves.
        \item[{\rm (b)}] We note that the fppf sheaf of connected components
            $\pi_0(\ShB\ShAut_{(\F_\star\Cscr,M_\ast)})$ is
            equivalent to the terminal sheaf since, fppf locally,
            $(\F_\star\Cscr,M_\ast)$ is the only equivalence
            class of objects. We thus have an inclusion of sheaves
            $\ShB\ShAut_{(\F_\star\Cscr,M_\ast)} \hookrightarrow
            \ShBr^{(\F_\star\Cscr,M_\ast)}$ where the former
            is the maximal subgroupoid of the full subcategory spanned by the
            global base point, $(\F_\star\Cscr,M_\ast)$. Hence
            to prove the claimed equivalence it suffices to apply $\Omega$ to
            both sides (which preserves sheaves). The resulting sheaf in both
            cases is the sheaf of automorphisms
            $\ShAut_{(\F_\star\Cscr,M_\ast)}$. 
        \item[{\rm (c)}] This follows by definition.
        
        \item[{\rm (d)}] This follows from Lemma~\ref{lem:0category}.   
        
        \item[{\rm (e)}] Let the fiber $\ShBr^{\F_\star\Cscr}\rightarrow\ShBr^{\Cscr}$ be denoted by $\Fscr$.  From part
            $(d)$, we have an exact sequence of sheaves:
            \[
            0 \simeq \pi_2(\Fscr) \rightarrow  \pi_1(\ShBr^{\F_\star\Cscr},
            \F_\star\Cscr) \hookrightarrow \pi_1(\ShBr^{\Cscr},\Cscr) \simeq \pi_0(\ShAut_{\Cscr}),
            \]
            where the last isomorphism is \cite[Lemma 2.5]{antieau-tbs}. The
            claim then follows from (b) and the definition of the forgetful
            maps.
    \end{enumerate}
\end{proof}

\begin{remark} Out of the above properties, property (d) is the most striking:
    it tells us that in order to descend a filtration on $\Cscr$ to a
    filtration on its twisted form the obstruction is purely ``discrete." This
    explains how previous results on descent for semiorthogonal decompositions,
    such as \cite{elagin} or \cite{bergh-schnurer}, could avoid the subtleties of
    gluing higher categorical objects (such as dg categories); see
    Theorem~\ref{thm:main-disc} for a precise statement.
\end{remark}

We also note the following simple corollary, which says that there are not too
many twisted forms.

\begin{corollary} \label{cor:small} $\ShBr^{(\F_\star\Cscr,M_\ast)}$ is a sheaf of small spaces.
\end{corollary}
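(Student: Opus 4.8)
The plan is to prove that $\ShBr^{(\F_\star\Cscr,M_\ast)}$ is a sheaf of small spaces by analyzing its Postnikov tower, using the fibration sequence relating it to the (already understood) twisted Brauer space $\ShBr^\Cscr$. By Proposition~\ref{prop:main}(b) we have $\ShBr^{(\F_\star\Cscr,M_\ast)}\we\ShB\ShAut_{(\F_\star\Cscr,M_\ast)}$, so it is connected and it suffices to show that each homotopy sheaf $\pi_i$ is a sheaf of small sets, which reduces to showing that $\ShAut_{(\F_\star\Cscr,M_\ast)}$ has small homotopy sheaves. First I would reduce to the unmarked case: the forgetful map $\ShBr^{(\F_\star\Cscr,M_\ast)}\to\ShBr^{\F_\star\Cscr}$ has fibers built from the spaces of objects $M_p\in\F_p\Cscr$ (more precisely, the fiber over a twisted filtration is a product over $p\in P$ of mapping spaces / connected components of $\iota\F_p\Dscr$, which are small since $\F_p\Dscr$ is a small $\infty$-category). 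Hence it is enough to treat $\ShBr^{\F_\star\Cscr}$.

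Next I would invoke Proposition~\ref{prop:main}(d): the morphism $\ShBr^{\F_\star\Cscr}\to\ShBr^\Cscr$ is $0$-truncated, so its fibers are sheaves of sets, and the long exact sequence of homotopy sheaves gives $\pi_i(\ShBr^{\F_\star\Cscr})\iso\pi_i(\ShBr^\Cscr)$ for $i\geq 2$, together with a short exact sequence identifying $\pi_1(\ShBr^{\F_\star\Cscr})$ with the subsheaf of $\pi_1(\ShBr^\Cscr)\iso\pi_0(\ShAut_\Cscr)$ of filtration-preserving autoequivalences, as in Proposition~\ref{prop:main}(e). So the smallness of $\ShBr^{\F_\star\Cscr}$ follows from the smallness of $\ShBr^\Cscr$. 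That last fact is recorded in the recollections on twisted Brauer spaces in Section~\ref{sec:tbs}: when $S$ is $1$-affine (in particular for affine $T$), the homotopy sheaves of $\ShAut_\Cscr$ are given in terms of (units of) Hochschild cohomology groups $\HH^{\ast}(\Cscr/T)$ by To\"en's theorem \cite{toen-dg}, and these are small since $\Cscr$ is a small stable $\infty$-category; the global-sections / local-global spectral sequence then gives smallness for general qcqs $S$.

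The main obstacle — really the only point requiring care — is the bookkeeping of sizes across the sheafification defining $\ShBr^{\F_\star\Cscr}$: a priori the fppf sheafification of a point of $\iota\ShFilt_P$ lands in large spaces, so one must check that after sheafification the homotopy sheaves are still small, i.e.\ that sheafification does not inflate the $\pi_i$. This is handled precisely by the fibration-sequence argument above, since each fiber and each base homotopy sheaf has already been identified with something manifestly small (Hochschild cohomology of small categories, connected components of spaces of objects in small categories, and subsheaves of these). Assembling these via the long exact sequence and the fact that a sheaf of spaces with small homotopy sheaves and a small set of connected components is a sheaf of small spaces completes the proof; the marked case then follows by combining with the reduction in the first paragraph.
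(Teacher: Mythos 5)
Your reduction via Proposition~\ref{prop:main}(b) starts on the right foot, but the load-bearing step of your argument --- ``a sheaf of spaces with small homotopy sheaves and a small set of connected components is a sheaf of small spaces'' --- is exactly the size statement that needs proof, and it is not a formal fact in this setting. The big fppf $\infty$-topos is not hypercomplete, so Postnikov towers need not converge: a large sheaf $F$ can differ from $\lim_n\tau_{\leq n}F$ by an $\infty$-connective piece that is invisible to all homotopy sheaves, and nothing in your fibration-sequence bookkeeping (which only controls the $\pi_i$'s of $\ShBr^{\F_\star\Cscr}$ and $\ShBr^{\Cscr}$ via Proposition~\ref{prop:main}(d),(e) and To\"en's theorem) rules this out. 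So the ``only point requiring care'' that you flag is deferred to an unproved principle rather than handled. A secondary issue: To\"en's computation only identifies $\pi_{i-1}\ShAut_{\Dscr}$ for $i\geq 2$; it says nothing about $\pi_0\ShAut_{\Cscr}$, whose smallness you would still need (it is easy, but not for the reason you cite).

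The paper's proof avoids homotopy sheaves altogether, and the fix for your argument is the same observation: $\ShAut_{(\F_\star\Cscr,M_\ast)}(T)$ is itself a \emph{small space} for every $T$, being (a union of components of) an endomorphism mapping space in the locally small $\infty$-category $\mathbf{MFilt}_P(T)$ --- functor categories between small categories are small, so no Hochschild-cohomological input or reduction to the unmarked case is needed. Given this, Proposition~\ref{prop:main}(b) identifies $\ShBr^{(\F_\star\Cscr,M_\ast)}$ with $\ShB\ShAut_{(\F_\star\Cscr,M_\ast)}$, which is the geometric realization of the bar construction on a group object in sheaves of small spaces; since small sheaves are closed under small colimits (and the inclusion into large sheaves preserves them), the delooping is a sheaf of small spaces. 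This is precisely the argument of \cite[Proposition~2.6]{antieau-tbs} that the paper invokes, and it sidesteps the convergence issue your route runs into.
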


\begin{proof}
    After Proposition~\ref{prop:main}(b), the argument is the same as in
    \cite[Proposition 2.6]{antieau-tbs}.
\end{proof}

\subsection{Descending filtrations on twisted forms}\label{sec:desctwist}

We will use the language of twisted Brauer spaces to explain the following phenomena: to check if filtrations on a scheme induce a compatible filtration on its twisted form one only needs to check 1-categorical compatibilities. This leads to computability of the obstructions in trying to descend semiorthogonal decompositions as illustrated in our examples in Section~\ref{sec:ex}.

\begin{theorem} \label{thm:main-disc}
    Let $S$ be a $1$-affine algebraic stack.
    Suppose that $X$ and $Y$ are two qcqs $S$-schemes. Let 
        \[\F_{\star}\Perfscr(X) \rightarrow
        \Perfscr(X)
        \] be a $P$-shaped filtration for some poset $P$. Assume further that 
    \begin{enumerate}
        \item[{\rm (i)}] there is a surjective fppf morphism $T \rightarrow S$ and an
            isomorphism $\alpha\colon Y_T \rightarrow X_T$ and
        \item[{\rm (ii)}] the induced filtration $\F_\star\Perfscr(Y_T)=\alpha^*q^*\F_\star\Perfscr(X)$ on
            $\Perfscr(Y_T)$ satisfies the cocycle condition:
            if $p_i\colon Y_{T\times_ST}\rightarrow Y_T$ are the two
            projections, then for each $p\in P$ the subcategories
            $$p_1^*\F_p\Perfscr(Y_T)\quad\text{and}\quad
            p_2^*\F_p\Perfscr(Y_T)$$ of $\Perfscr(Y_{T\times_ST})$ coincide.
        \end{enumerate}
        Then, there is a filtration $\F_{\star}\Perfscr(Y) \rightarrow
        \Perfscr(Y)$ and an equivalence of filtrations
        \[
        (\F_{\star}\Perfscr(Y_T) \rightarrow \Perfscr(Y_T)) \simeq (\F_{\star}\Perfscr(X_T) \rightarrow \Perfscr(X_T)),
        \]
        induced by $\alpha$. Moreover, if $\F_\star\Perfscr(X)$ is a $P$-shaped
        semiorthogonal decomposition of $\Perfscr(X)$, then the induced
        filtration $\F_\star\Perfscr(Y)$ is a semiorthogonal decomposition of
        $\Perfscr(Y)$.
\end{theorem}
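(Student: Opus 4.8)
The plan is to recognize a $P$-shaped filtration on $\Perfscr(Y)$ as a global section of a $0$-truncated fppf sheaf and then to descend such a section along $q$; the point is that $0$-truncatedness is exactly what allows one to glue from the purely $1$-categorical datum~(ii) rather than from an infinite tower of higher coherences. Concretely, I would work with the fppf sheaf $\iota\mathbf{Filt}^{\Perfscr(Y)}_P$ on $\Aff_S$, whose value on an $S$-scheme $\Spec R$ is --- using the flat base-change identification $\Perfscr(Y)\otimes_{\Perfscr(S)}\Perfscr(R)\we\Perfscr(Y_R)$ for perfect complexes --- the set of $P$-shaped filtrations $\F_\star\Perfscr(Y_R)\rightarrow\Perfscr(Y_R)$. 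That this is an fppf sheaf is Theorem~\ref{thm:desc}(i) together with Remark~\ref{rem:shv}, and that it is $0$-truncated, i.e.\ a sheaf of sets, is Corollary~\ref{cor:0-tr}, which rests on the poset Lemma~\ref{lem:0category}. A filtration $\F_\star\Perfscr(Y)\rightarrow\Perfscr(Y)$ as in the statement is precisely a section of this sheaf over $S$.

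Next I would build such a section after base change along $q$ and descend it. Hypothesis~(i) provides the isomorphism $\alpha\colon Y_T\rightarrow X_T$, hence an equivalence $\Perfscr(X)\otimes_{\Perfscr(S)}\Perfscr(T)\we\Perfscr(Y_T)$, and transporting the pulled-back filtration $q^*\F_\star\Perfscr(X)$ across it yields the filtration $\F_\star\Perfscr(Y_T)=\alpha^*q^*\F_\star\Perfscr(X)$ of the statement, a section of $\iota\mathbf{Filt}^{\Perfscr(Y)}_P$ over $T$. Since $q$ is a surjective fppf morphism and $\iota\mathbf{Filt}^{\Perfscr(Y)}_P$ is a sheaf of sets, the restriction $\iota\mathbf{Filt}^{\Perfscr(Y)}_P(S)\rightarrow\iota\mathbf{Filt}^{\Perfscr(Y)}_P(T)$ is the equalizer of the two pullbacks along the projections $p_1,p_2\colon T\times_ST\rightarrow T$ --- and there is no higher coherence to impose, precisely because the sheaf is $0$-truncated. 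Two elements of the set $\iota\mathbf{Filt}^{\Perfscr(Y)}_P(T\times_ST)$ coincide if and only if the full subcategories indexed by each $p\in P$ coincide inside $\Perfscr(Y_{T\times_ST})$; this is the description of $\mathrm{Sub}^\Cscr$, and hence of $\Filt^{\Cscr}_P$, as a poset of sequences of full subcategories in the proof of Lemma~\ref{lem:0category}. Therefore the equalizer condition on our section is exactly the equality $p_1^*\F_p\Perfscr(Y_T)=p_2^*\F_p\Perfscr(Y_T)$ asserted in~(ii), so $\F_\star\Perfscr(Y_T)$ descends to a section $\F_\star\Perfscr(Y)\rightarrow\Perfscr(Y)$ over $S$. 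By construction its base change along $q$ recovers $\F_\star\Perfscr(Y_T)$, which $\alpha$ identifies with $\F_\star\Perfscr(X_T)\rightarrow\Perfscr(X_T)$; this is the asserted equivalence of filtrations.

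For the final clause, suppose $\F_\star\Perfscr(X)$ is a $P$-shaped semiorthogonal decomposition. Its base change $\F_\star\Perfscr(X_T)\we\F_\star\Perfscr(Y_T)$ is again one: exhaustiveness is preserved by the limit argument appearing in the proof of Theorem~\ref{thm:desc}, and admissibility of each inclusion by Proposition~\ref{prop:bcadmissible}. Thus the filtration $\F_\star\Perfscr(Y)$ just constructed restricts, along the fppf cover $q$, to a section of the substack $\mathbf{Sod}_P\subseteq\mathbf{Filt}_P$; since $\mathbf{Sod}_P$ is an fppf stack by Corollary~\ref{cor:sod-desc} --- equivalently, since admissibility is fppf-local by Theorem~\ref{thm:et-sod} and exhaustiveness satisfies descent --- it follows that $\F_\star\Perfscr(Y)$ is itself a $P$-shaped semiorthogonal decomposition of $\Perfscr(Y)$. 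The step I expect to be the crux is the descent in the second paragraph: a priori, gluing a filtration along $q$ demands an infinite hierarchy of cocycle conditions, and the entire content here is that Lemma~\ref{lem:0category} forces the relevant classifying sheaf to be $0$-truncated so that this hierarchy collapses to~(ii). The one technical point to handle with some care throughout is the base-change identification $\Perfscr(Y_T)\we\Perfscr(Y)\otimes_{\Perfscr(S)}\Perfscr(T)$ (and likewise over $T\times_ST$), which is what lets the whole argument be carried out inside the single sheaf $\iota\mathbf{Filt}^{\Perfscr(Y)}_P$.
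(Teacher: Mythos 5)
Your proof is correct and is essentially the paper's argument: both reduce the theorem to descending a section of a $0$-truncated fppf sheaf of filtrations (Lemma~\ref{lem:0category}, Corollary~\ref{cor:0-tr}), so that the $1$-categorical hypothesis (ii) is the entire cocycle condition, and both deduce the semiorthogonal-decomposition claim from Theorem~\ref{thm:et-sod}. The only difference is cosmetic: you work directly with the sheaf $\iota\mathbf{Filt}_P^{\Perfscr(Y)}$, whereas the paper phrases the same descent as lifting the classifying map $S\rightarrow\ShBr^{\Perfscr(X)}$ of $Y$ along $\ShBr^{\F_\star\Perfscr(X)}\rightarrow\ShBr^{\Perfscr(X)}$, whose fiber is exactly that sheaf.
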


\begin{proof} By assumption, the $S$-scheme $Y$ gives a global section
    $\beta_Y\colon S \rightarrow \ShBr^{\Perf(X)}$. By
    Proposition~\ref{prop:main}(c), we need to lift $\beta_Y$ along the map
    $\ShBr^{F_{\star}\Perf(X)} \rightarrow \ShBr^{\Perf(X)}$. We denote by
    $\Fscr_{\beta_Y}$ the fiber of $\ShBr^{F_{\star}\Perf(X)} \rightarrow
    \ShBr^{\Perf(X)}$ over $\beta_Y$, which is an fppf sheaf on $\Sch_{Y}$
    which is naturally equivalent to $\mathbf{Filt}_P^{\Cscr
    \otimes_{\Perf(X)} \Perf(Y)}$. We proceed to construct a section of the
    canonical map $\Fscr_{\beta_Y} \rightarrow Y$. But, by
    Proposition~\ref{prop:main}(d) (or Lemma~\ref{lem:0category}) the sheaf
    $\Fscr_{\beta_Y}$ is an fppf
    sheaf of sets, hence the hypothesis in (ii) suffices to construct the
    desired section. The final claim follows from Theorem~\ref{thm:et-sod}.
\end{proof}


\section{Examples} \label{sec:ex}
In many good situations, we have a fiber sequence
\[
\ShBr^{\F_\star\Cscr} \rightarrow \ShBr^{\Cscr} \rightarrow \ShB G,
\]
of sheaves
where, by Proposition~\ref{prop:main}(d), $G$ is a discrete group. In fact,
this will hold if and only if the sheaf of subgroups of the sheaf $\pi_0\ShAut_\Cscr$
corresponding to automorphisms which preserve the filtration $\F_\star\Cscr$ is
normal. (For a case where this does not happen, see
Example~\ref{ex:nonnormal}.)
Furthermore, Proposition~\ref{prop:main}(e) in conjunction with known
computations of the homotopy automorphisms of $\Perfscr(S)$ will let us compute
$G$. In these cases, we get a theory of \emph{characteristic classes} for
filtrations  --- to a twisted form $\Dscr$ of $\Cscr$ on $S$ we get a class $o(\Cscr(\star))
\in \H^1_{\fppf}(S, \G)$ whose vanishing controls whether or not we obtain a
filtration on $\Dscr$ which is fppf-locally equivalent to $\F_\star\Cscr$. We will illustrate how
this works in this section.

\begin{example}\label{ex:nonnormal}
    For a case where the sheaf of subgroups
    $\pi_0\ShAut_{\F_\star\Cscr}\subseteq\pi_0\ShAut_\Cscr$ is not normal,
    let $\Cscr\we\bigoplus_{i=1}^3\Perfscr(S)$ and consider the $[1]$-shaped filtration given by
    $\F_0\Cscr=\oplus_{i=1}^2\Perfscr(S)$ and $\F_1\Cscr=\Cscr$. Then,
    there is an exact sequence
    $0\rightarrow\ZZ^{\oplus 3}\rightarrow\pi_0\ShAut_\Cscr\rightarrow
    S_3\rightarrow 0$, where $S_3$ is the symmetric group on $3$ letters. Here, the
    $\ZZ^{\oplus 3}$ corresponds to separately suspending in each copy of
    $\Perfscr(S)$. There is also an
    exact sequence $$0\rightarrow\ZZ^{\oplus
    3}\rightarrow\pi_0\ShAut_{\F_\star\Cscr}\rightarrow\ZZ/2\rightarrow 0.$$ Since
    $\ZZ/2$ is not normal in $S_3$, we see that $\pi_0\ShAut_{\F_\star\Cscr}$ is
    not normal in $\pi_0\ShAut_\Cscr$.
\end{example}

\subsection{From Beilinson to Bernardara}\label{sect:beilber}

Let $[n]$ be the poset $\{0\rightarrow
1\rightarrow\cdots\rightarrow i \rightarrow \cdots n\}$. Beilinson's description of the
derived category of $\PP_S^n$ gives an $[n]$-shaped
filtration of $\Perfscr(\PP_S^n)$ with
$$i \mapsto \F_i\Perfscr(\PP_S^n)=\langle\Oscr,\Oscr(1),\ldots,\Oscr(i)\rangle.$$ 
We call this the {\bf Beilinson filtration}.

\begin{lemma} \label{lemma:pn} There is a fiber sequence
    \[
    \ShBr^{\F_\star\Perfscr(\PP_S^n)}\rightarrow\ShBr^{\Perfscr(\PP_S^n)} \rightarrow \ShB\ZZ
    \]
    of fppf sheaves of spaces on $S$.
\end{lemma}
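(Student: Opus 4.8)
**The plan is to identify the relevant fiber sequence with the one coming from the automorphism groups via Proposition~\ref{prop:main}(e).** By Proposition~\ref{prop:main}(d), the morphism $\ShBr^{\F_\star\Perfscr(\PP_S^n)}\rightarrow\ShBr^{\Perfscr(\PP_S^n)}$ is $0$-truncated, so its fiber $\Fscr$ over the base point $\Perfscr(\PP^n_S)$ is a sheaf of sets, and by Proposition~\ref{prop:main}(e) it carries a free transitive action of the quotient sheaf of groups $\pi_0(\ShAut_{\Perfscr(\PP^n_S)})/\pi_0(\ShAut_{\F_\star\Perfscr(\PP^n_S)})$, provided the subgroup of filtration-preserving autoequivalences is normal. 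So the first step is to compute $\pi_0\ShAut_{\Perfscr(\PP^n_S)}$ and identify the subgroup of those autoequivalences preserving the Beilinson filtration.

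\textbf{Second, I would compute the homotopy automorphism sheaf of $\Perfscr(\PP^n_S)$.} The autoequivalences of $\Perfscr(\PP^n)$ are classically understood: by Bondal--Orlov type results (or directly, since $\PP^n$ is Fano), $\pi_0\ShAut_{\Perfscr(\PP^n_S)}$ is the sheaf associated to the group generated by $\Pic(\PP^n)\iso\ZZ$ (tensoring by line bundles $\Oscr(k)$), $\Aut(\PP^n)=\PGL_{n+1}$ (pullback along automorphisms), and the shift $[1]\iso\ZZ$; the Picard twists and automorphisms do not commute with shift only trivially, and one gets $\pi_0\ShAut_{\Perfscr(\PP^n_S)}\iso \ZZ\times(\ZZ\rtimes\PGL_{n+1})$ roughly, where the first $\ZZ$ is the shift. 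The key point: an autoequivalence of the form $F$ preserves the filtration $\F_i=\langle\Oscr,\dots,\Oscr(i)\rangle$ if and only if it preserves each $\F_i$ as a subcategory; since any automorphism of $\PP^n$ fixes $\Oscr$ and permutes nothing (it fixes each $\Oscr(j)$ up to the $\PGL$-action on global sections, but the subcategory $\langle\Oscr,\dots,\Oscr(i)\rangle$ is manifestly $\Aut(\PP^n)$-stable), and the shift is always an autoequivalence of any stable subcategory, the only autoequivalences that can \emph{fail} to preserve the filtration are the Serre twists $\Oscr(k)$ with $k\neq 0$: twisting by $\Oscr(k)$ sends $\F_i$ to $\langle\Oscr(k),\dots,\Oscr(k+i)\rangle\neq\F_i$. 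Hence $\pi_0\ShAut_{\F_\star\Perfscr(\PP^n_S)}\subseteq\pi_0\ShAut_{\Perfscr(\PP^n_S)}$ is exactly the subgroup generated by $\Aut(\PP^n)=\PGL_{n+1}$ and the shift $\ZZ$, and the quotient is $\ZZ$ (generated by the class of $\Oscr(1)$). I would check normality directly: since the $\Pic$-twist subgroup $\ZZ$ is itself normal in $\pi_0\ShAut_{\Perfscr(\PP^n_S)}$ (it is the image of the $\Pic$, which is preserved under conjugation by automorphisms and shift), and the filtration-preserving subgroup contains everything complementary to it, normality of the latter follows.

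\textbf{Third, I would assemble the fiber sequence.} Since $\ShBr^{\Perfscr(\PP^n_S)}$ is connected with $\pi_1\iso\pi_0\ShAut_{\Perfscr(\PP^n_S)}$ by Proposition~\ref{prop:main}(b) applied with trivial filtration (i.e.\ $\ShB\ShAut$), and similarly $\ShBr^{\F_\star\Perfscr(\PP^n_S)}\we\ShB\ShAut_{\F_\star\Perfscr(\PP^n_S)}$, the forgetful map $\ShBr^{\F_\star\Perfscr(\PP^n_S)}\rightarrow\ShBr^{\Perfscr(\PP^n_S)}$ is $\ShB$ applied to the inclusion $\ShAut_{\F_\star\Perfscr(\PP^n_S)}\hookrightarrow\ShAut_{\Perfscr(\PP^n_S)}$. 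Taking the long exact sequence on homotopy sheaves, or directly using that $\ShB$ of a short exact sequence of sheaves of groups $1\rightarrow H\rightarrow G\rightarrow Q\rightarrow 1$ with $Q$ discrete yields a fiber sequence $\ShB H\rightarrow\ShB G\rightarrow\ShB Q$, we obtain
\[
\ShBr^{\F_\star\Perfscr(\PP_S^n)}\rightarrow\ShBr^{\Perfscr(\PP_S^n)}\rightarrow\ShB\ZZ,
\]
with the quotient $\ZZ$ identified via $\Oscr(1)$ as above. Here I am using that $\ShAut_{\Perfscr(\PP^n_S)}$ is genuinely grouplike (it is a sheaf of grouplike $A_\infty$-spaces, as recalled in Section~\ref{sec:tbs}(iv)), so the deloopings and the fiber sequence make sense; the point that the subgroup is \emph{normal} (so that the quotient sheaf of groups exists) is exactly the computation from the second step.

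\textbf{The main obstacle} I anticipate is pinning down $\ShAut_{\Perfscr(\PP^n_S)}$ and the subgroup of filtration-preserving autoequivalences with enough care over a general base $S$ (not just a field) — one needs the relative version of the Bondal--Orlov description, or else to argue more hands-on that any filtration-preserving autoequivalence, modulo shift, must be induced by an automorphism of $\PP^n_S$ because it must fix $\Oscr=\F_0$ (up to shift, by exceptionality: $\ShMap(\Oscr,\Oscr)\we\Oscr_S$ forces the image to again be a line bundle, and among the line bundles in $\F_0=\langle\Oscr\rangle$ only $\Oscr$ itself survives) and hence is determined by its action on $\H^0(\PP^n,\Oscr(1))$. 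Getting this normality/identification right is the crux; once it is in place, the fiber-sequence formalism from Section~\ref{sec:tbs} does the rest essentially formally.
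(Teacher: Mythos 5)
Your proposal is correct and follows essentially the same route as the paper: compute $\pi_0\ShAut_{\Perfscr(\PP^n_S)}\iso\PGL_{n+1}\times\ZZ\times\Pic_{\PP^n/S}$ via Bondal--Orlov, observe via Proposition~\ref{prop:main}(e) that the filtration-preserving subsheaf omits exactly the nontrivial Picard twists, and deloop the resulting sequence $\ShAut_{\F_\star\Perfscr(\PP^n_S)}\rightarrow\ShAut_{\Perfscr(\PP^n_S)}\rightarrow\ZZ$ to obtain the fiber sequence. (Minor point: the Picard factor is a direct, not semidirect, factor since $\PGL_{n+1}$ acts trivially on $\Pic_{\PP^n/S}\iso\ZZ$, which makes your normality check immediate.)
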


\begin{proof} Since, by a result of Bondal and Orlov  \cite[Theorem~3.1]{bondal-orlov}, we know the homotopy automorphisms of $\Perfscr(\PP^n_S)$, the sheaf of automorphisms of $\Perfscr(\PP^n_S)$ has homotopy sheaves
    $$\pi_i\ShAut_{\Perfscr(\PP^n_S)}\iso\begin{cases}
        \PGL_{n+1} \times \ZZ \times \Pic_{\PP^n/S} &\text{if $i=0$,}\\
    \Gm            &\text{if $i=1$, and}\\
    0&\text{otherwise.}\end{cases}$$
    Here, $\PGL_{n+1}$ acts by automorphisms of the $S$-scheme $\PP^{n}_S$,
    the group $\ZZ$ acts by suspension $\Fscr\mapsto\Fscr[1]$ in
    $\Perfscr(\PP^{n}_S)$, and the relative Picard scheme
    $\Pic_{\PP^n/S}\iso\ZZ$ acts
    by tensoring with line bundles. 
    Similarly, for the Beilinson filtration $\F_\star\Perfscr(\PP^n_S)$, the sheaf of automorphisms is the subsheaf of groups on the connected components that preserve the filtration. By Proposition~\ref{prop:main}.e, this eliminates only the non-zero elements
    of $\Pic_{\PP_S^n/S}$ since they do not preserve the Beilinson filtration.
    Thus, $\ShAut_{\F_\star\Perfscr(\PP_S^n)}$ has homotopy sheaves
    $$\pi_i\ShAut_{\F_\star\Perfscr(\PP^n_S)}\iso\begin{cases}
    \PGL_{n+1} \times \ZZ&\text{if $i=0$,}\\
    \Gm            &\text{if $i=1$, and}\\
    0&\text{otherwise.}\end{cases}$$
    It follows that there is a fiber sequence $$\ShAut_{\F_\star\Perfscr(\PP_S^n)}\rightarrow\ShAut_{\Perfscr{\PP_S^n}}\rightarrow\ZZ,$$ and this sequence deloops to give a fiber sequence
    as claimed.
\end{proof}
            
The next theorem generalizes the main result of \cite{bernardara}.

\begin{theorem} \label{thm:sb} Let $S$ be a qcqs $1$-affine algebraic stack and
    let $P \rightarrow S$ be a Severi--Brauer scheme
    associated to an Azumaya algebra $\Ascr$ of degree $(n+1)$ with
    Brauer class $\alpha$. There exists a natural semiorthogonal
    decomposition
    \[
    \Perfscr(P) \simeq \langle \Perfscr(S),\Perfscr(S,\alpha),\cdots, \Perfscr(S, \alpha^{\otimes n}) \rangle.
    \]
\end{theorem}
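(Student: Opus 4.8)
The plan is to realize the Severi--Brauer scheme $P \rightarrow S$ as a twisted form of projective space equipped with the Beilinson filtration, and then to apply Theorem~\ref{thm:main-disc} (or equivalently, to lift a section along $\ShBr^{\F_\star\Perfscr(\PP^n_S)} \rightarrow \ShBr^{\Perfscr(\PP^n_S)}$ using Lemma~\ref{lemma:pn} and Proposition~\ref{prop:main}). First I would recall that there is a faithfully flat (indeed \'etale) cover $T \rightarrow S$ splitting $\Ascr$, so that $P_T \simeq \PP^n_T$, and the Azumaya algebra $\Ascr$ corresponds to a $\PGL_{n+1}$-torsor, hence to a class in $\H^1_{\fppf}(S,\PGL_{n+1})$; the Severi--Brauer scheme $P$ is the associated $\PP^n$-bundle. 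This exhibits $P$ as a point $\beta_P \colon S \rightarrow \ShBr^{\Perfscr(\PP^n_S)}$ of the twisted Brauer space, since $\Perfscr(P)$ is fppf-locally equivalent to $\Perfscr(\PP^n_S)$.

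The key step is to produce the lift of $\beta_P$ to $\ShBr^{\F_\star\Perfscr(\PP^n_S)}$. By Lemma~\ref{lemma:pn}, the fiber of $\ShBr^{\F_\star\Perfscr(\PP^n_S)} \rightarrow \ShBr^{\Perfscr(\PP^n_S)}$ over $\beta_P$ is controlled by the map $G \rightarrow \pi_0\ShAut_{\Perfscr(\PP^n_S)} \simeq \PGL_{n+1}\times\ZZ\times\Pic_{\PP^n/S}$ landing in the subsheaf $\PGL_{n+1}\times\ZZ$; the obstruction to the lift lives in $\H^1_{\fppf}(S,\ZZ) = 0$ (the quotient sheaf $\Pic_{\PP^n/S} \iso \ZZ$ being a constant sheaf, and $S$ being connected --- or, if not connected, locally constant, whose $\H^1$ still vanishes for schemes). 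Concretely, the $\PGL_{n+1}$-torsor defining $P$ does not touch the $\Pic$-factor, so the cocycle automatically preserves the Beilinson filtration: the descent data for $\Perfscr(P)$ is given by automorphisms of $\Perfscr(\PP^n_T)$ coming from $\PGL_{n+1}$, and these preserve each $\F_i\Perfscr(\PP^n) = \langle\Oscr,\ldots,\Oscr(i)\rangle$ since automorphisms of $\PP^n$ fix $\Oscr(1)$ up to the line-bundle twist already quotiented out. Thus the cocycle condition of Theorem~\ref{thm:main-disc}(ii) is satisfied --- and here it is genuinely only a $1$-categorical check, by Proposition~\ref{prop:main}(d) or Lemma~\ref{lem:0category}. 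We therefore obtain a descended filtration $\F_\star\Perfscr(P) \rightarrow \Perfscr(P)$.

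Next I would identify the graded pieces. Fppf-locally on $T$ the $i$-th graded piece $\mathrm{gr}_i\Perfscr(\PP^n_T) = \frac{\F_i\Perfscr(\PP^n_T)}{\F_{i-1}\Perfscr(\PP^n_T)}$ is generated by the exceptional object $\Oscr(i)$ and hence is equivalent to $\Perfscr(T)$. Taking the Verdier quotient commutes with the descent (the quotient functor is $\Oscr_S$-linear and compatible with base change, by Proposition~\ref{prop:all-equal} and the base-change statement Proposition~\ref{prop:bcadmissible}), so $\mathrm{gr}_i\Perfscr(P)$ is a twisted form of $\Perfscr(S)$, i.e., $\mathrm{gr}_i\Perfscr(P) \simeq \Perfscr(S,\gamma_i)$ for some Brauer class $\gamma_i \in \Br(S)$. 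To pin down $\gamma_i$, observe that the generator $\Oscr(i)$ transforms under the $\PGL_{n+1}$-descent data exactly as the $i$-th tensor power of the tautological relation, so that its sheaf of endomorphisms descends to $\Ascr^{\otimes i}$; more precisely, the $\PGL_{n+1}$-torsor acts on $\langle\Oscr(i)\rangle \simeq \Perfscr(\PP^n_T)$-module via the character $\PGL_{n+1} \rightarrow \B\Gm$ raised to the $i$-th power --- this is the classical computation (see~\cite{bernardara}) that the $i$-th piece of the Beilinson decomposition of a Severi--Brauer variety is twisted by $\alpha^{\otimes i}$. Hence $\gamma_i = \alpha^{\otimes i}$ and $\mathrm{gr}_i\Perfscr(P) \simeq \Perfscr(S,\alpha^{\otimes i})$.

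Finally, since the descended filtration is fppf-locally the Beilinson semiorthogonal decomposition, the last sentence of Theorem~\ref{thm:main-disc} (via Theorem~\ref{thm:et-sod}) guarantees that $\F_\star\Perfscr(P) \rightarrow \Perfscr(P)$ is itself a semiorthogonal decomposition, yielding $\Perfscr(P) \simeq \langle\Perfscr(S),\Perfscr(S,\alpha),\ldots,\Perfscr(S,\alpha^{\otimes n})\rangle$. \textbf{The main obstacle} I anticipate is not the descent of the filtration --- that is now formal --- but the identification of the twisting class of each graded piece as $\alpha^{\otimes i}$: this requires tracking how the $\PGL_{n+1}$-cocycle acts on the exceptional generator $\Oscr(i)$ and matching it with the Brauer class, which is where one must be careful about the distinction between $\PGL_{n+1}$ acting on $\PP^n$ and the induced action on the category, and about signs/powers in the resulting Brauer classes.
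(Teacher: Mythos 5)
Your overall route is the same as the paper's: view $\Perfscr(P)$ as a point of $\ShBr^{\Perfscr(\PP^n_S)}$, lift it along $\ShBr^{\F_\star\Perfscr(\PP^n_S)}\rightarrow\ShBr^{\Perfscr(\PP^n_S)}$ using Lemma~\ref{lemma:pn}, and then identify the graded pieces with $\Perfscr(S,\alpha^{\otimes p})$ by reducing to the classical computation of Bernardara. However, one of your two justifications for the vanishing of the obstruction is wrong: the obstruction is indeed a class in $\H^1_{\fppf}(S,\ZZ)$, but it is \emph{not} true that this group vanishes ``for schemes'' --- $\H^1_{\et}(S,\ZZ)$ is nonzero for non-normal schemes (e.g.\ a nodal curve), and here $S$ is moreover only assumed to be a qcqs $1$-affine algebraic stack, with no normality hypothesis. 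This is exactly the pitfall the paper's proof is designed to avoid: there the classifying map $f\colon S\rightarrow\ShBr^{\Perfscr(\PP^n_S)}$ is factored through $\ShBPGL_{n+1}\rightarrow\ShBr^{\Perfscr(\PP^n_S)}$, and one proves $\H^1_{\fppf}(\ShBPGL_{n+1},\ZZ)=0$ in the universal case over $\Spec\ZZ$ by the descent (bar) spectral sequence, using normality of $\Spec\ZZ$ for the vanishing of $\E_1^{1,0}$ and an explicit $d_1$ computation to kill $\E_1^{0,1}$; the obstruction over an arbitrary $S$ then vanishes because it is pulled back from the universal one.

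Fortunately, your parallel ``concrete'' argument is correct and is essentially equivalent to this factorization: the descent data for $P$ relative to a splitting cover $T\rightarrow S$ is given by elements of $\PGL_{n+1}$, and pullback along an automorphism of $\PP^n$ sends each $\Oscr(i)$ to $\Oscr(i)$ up to a line bundle from the base, hence preserves each $S$-linear subcategory $\F_i\Perfscr(\PP^n)=\langle\Oscr,\ldots,\Oscr(i)\rangle$; so the cocycle condition of Theorem~\ref{thm:main-disc}(ii) holds, and by Proposition~\ref{prop:main}(d) (equivalently Lemma~\ref{lem:0category}) this $1$-categorical check suffices to descend the filtration, with admissibility coming along by Theorem~\ref{thm:et-sod}. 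If you rely on this version, you should drop the claim about $\H^1_{\fppf}(S,\ZZ)$ altogether (or replace it by the universal computation on $\ShBPGL_{n+1}$), and note that Theorem~\ref{thm:main-disc} as stated is for $S$-schemes, so a word is needed to apply it with $S$ a $1$-affine stack and $X=\PP^n_S$, $Y=P$ representable over $S$. The identification of the twists $\beta_p=\alpha^{\otimes p}$ via Bernardara matches the paper's final step.
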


\begin{proof}
    Let $f\colon S \rightarrow \ShBr^{\Perfscr(\PP^n_S)}$ classify $\Perfscr(P)$. By
    Lemma~\ref{lemma:pn}, to descend the Beilinson filtration to $\Perfscr(P)$,
    it suffices to prove that the composite $S \rightarrow \ShB\ZZ$ is null.
    But $f$ factors through the map $\ShBPGL_{n+1} \rightarrow
    \ShBr^{\Perfscr(\PP^n_X)}$ which classifies $\Perfscr$ of the universal
    $\PGL_{n+1}$-torsor. It suffices to prove that \[
    \H^1_{\fppf}(\ShBPGL_{n+1}; \ZZ) = 0
    \]
    in the case where $S=\Spec\ZZ$.
    We may use the spectral sequence $$\E_1^{st}=\H^s_\fppf(\PGL_{n+1}^{\times
    t},\ZZ)\Rightarrow\H^{s+t}_{\fppf}(\ShBPGL_{n+1},\ZZ)$$ associated to the simplicial scheme
    \[
    \begin{tikzcd}
    *
    & \PGL_{n+1} \arrow[l, shift left]
    \arrow[l, shift right]
    & \PGL_{n+1} \times \PGL_{n+1} \arrow[l]
    \arrow[l, shift left=
    2
    ]
    \arrow[l, shift right=
    2
    ]
    & \cdots
    \arrow[l, shift left]
    \arrow[l, shift right]
    \arrow[l, shift left=3]
    \arrow[l, shift right=3],
    \end{tikzcd}
    \]
    to compute fppf cohomology. The only groups that might contribute to
    $\H^1_{\fppf}(\ShBPGL_{n+1},\ZZ)$  from the $\E_1$-page
    are $\E_1^{0,1}=\H^0_{\fppf}(\PGL_{n+1};\ZZ)$ and $\E_1^{1,0}=\H^1_{\fppf}(\ast;\ZZ)=0$. The latter is zero since
    $\ast=\Spec\ZZ$ is normal (see~\cite[2.1]{deninger}). On the other hand,
    $\H^0_\fppf(\PGL_{n+1},\ZZ)\iso\ZZ$. Now, in low degrees, the $d_1$-differentials give
    a complex $$\E_1^{0,0}\rightarrow\E_1^{0,1}\rightarrow\E_1^{0,2}$$ and the
    cohomology in the middle term is $\E_2^{0,1}$. One can check easily that this complex is isomorphic
    to $\ZZ\xrightarrow{0}\ZZ\xrightarrow{\iso}\ZZ$. Hence, $\E_2^{0,1}=0$ so
    that $\H^1_{\fppf}(\ShBPGL_{n+1},\ZZ)=0$. This proves that we can
    descend the Beilinson filtration to obtain a $\Delta^n$-shaped
    semiorthogonal decomposition $\F_\star\Perfscr(P)$ of $\Perfscr(P)$.

    Each graded piece $$\frac{\F_p\Perfscr(P)}{\F_{p-1}\Perfscr(P)}$$ is a twisted
    form of $\Perfscr(S)$ and thus of the form $\Perfscr(S,\beta_p)$ for some
    $\beta_p$. To complete the theorem, it suffices to see that $\beta_p=\alpha^{\otimes p}$.
    But, we see by reducing to the universal case $S=\Spec\ZZ$ that
    $\beta_p=\alpha^{\otimes a_p}$ for some $a_p$ and then we can find $a_p=p$,
    for example by referring to Bernardara~\cite{bernardara}.
\end{proof}

\subsection{Marking the Beilinson filtration}

Now consider the marked version of the above picture, where we mark
$\F_p\Perfscr(\PP^n)$ by $\Oscr(p)$. We can in fact describe the sheaf
$\ShBr^{(\F_\star\Perfscr(\PP^n),\Oscr(\ast))}$. To do so, consider the
maximal torus $T_{\PGL_{n+1}}$ of $\PGL_{n+1}$.

\begin{proposition} \label{prop:rigidifies}
    \begin{enumerate}
        \item[\rm (i)] We have an equivalence of sheaves
            $$\ShBr^{(\F_\star\Perfscr(\PP^n),\Oscr(\ast))}\we\ShB G,$$ where $G$ is a central extension of $\PGL_{n+1}$ by its maximal torus, i.e., we have an exact sequence of groups
            \[
            1 \rightarrow T_{\PGL_{n+1}} \rightarrow G \rightarrow \PGL_{n+1} \rightarrow 1
            \]
            and $T_{\PGL_{n+1}}$ is in the center of $G$.
        \item[{\rm (ii)}] If $(\F_\star\Dscr,M_*)$ is a twisted form of
            $(\F_\star\Perfscr(\PP^n),\Oscr(\ast))$, then $\F_\star\Dscr$ is
            equivalent to the Beilinson filtration on $\Perfscr(P)$ for some
            Severi--Brauer scheme $P\rightarrow S$ associated to an
            Azumaya algebra of degree $n+1$.
        \item[\rm (ii)] If $P\rightarrow S$ is a Severi--Brauer scheme
            associated to an Azumaya algebra $\Ascr$ of degree $n+1$ on $S$, then
            $\F_\star\Perfscr(P)\in\pi_0\ShBr^{\F_\star\Perfscr(\PP^n)}(S)$
            lifts to $\pi_0\ShBr^{(\F_\star\Perfscr(\PP^n),\Oscr(\ast))}$ if
            and only if $\Ascr\iso\Escr\mathrm{nd}(\Vscr)$ for some vector
            bundle $\Vscr$ on $S$.
    \end{enumerate}
\end{proposition}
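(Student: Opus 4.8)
The plan is to derive (i) from a computation of the automorphism sheaf of the marked Beilinson filtration, to use (i) to reduce (ii) to Bernardara's description of $\Perfscr$ of a Severi--Brauer scheme, and to prove (iii) directly from the graded pieces identified in Theorem~\ref{thm:sb}. For part (i): by Proposition~\ref{prop:main}(b) we have $\ShBr^{(\F_\star\Perfscr(\PP^n_S),\Oscr(\ast))}\we\ShB G$ with $G=\ShAut_{(\F_\star\Perfscr(\PP^n_S),\Oscr(\ast))}$, so everything reduces to analyzing $G$. The space of automorphisms of the marked filtration is the homotopy fiber, over the point $(\Oscr(0),\dots,\Oscr(n))$, of the evaluation map
\[
\ev\colon\ShAut_{\F_\star\Perfscr(\PP^n_S)}\longrightarrow\prod_{p=0}^{n}\iota\F_p\Perfscr(\PP^n_S),\qquad \phi\longmapsto(\phi\Oscr(p))_p,
\]
because a path from $\ev(\phi)$ to the basepoint is exactly a tuple of equivalences $\phi\Oscr(p)\we\Oscr(p)$. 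Since each $\Oscr(p)$ is exceptional, the sheaf $\iota\F_p\Perfscr(\PP^n_S)$ has homotopy sheaves $\Gm$ in degree $1$ and $0$ in degrees $\geq2$ at $\Oscr(p)$, while Lemma~\ref{lemma:pn} gives $\pi_0\ShAut_{\F_\star\Perfscr(\PP^n_S)}\iso\PGL_{n+1}\times\ZZ$, $\pi_1\iso\Gm$, $\pi_{\geq2}=0$. Running the long exact sequence of this fibration: on $\pi_1$ the map $\Gm\to\Gm^{n+1}$ is the diagonal (the central $\Gm$ of natural automorphisms of the identity acts by a single scalar on all the $\Oscr(p)$), hence injective, so $G$ is $0$-truncated; its image in $\pi_0\ShAut_{\F_\star\Perfscr(\PP^n_S)}$ is the preimage of the basepoint of $\pi_0\bigl(\prod_p\iota\F_p\Perfscr(\PP^n_S)\bigr)$, namely $\PGL_{n+1}\times\{0\}$, because an $S$-automorphism of $\PP^n_S$ fixes each $\Oscr(p)$ up to isomorphism while a nonzero shift does not; and the kernel over that image is $\coker(\Gm\xrightarrow{\Delta}\Gm^{n+1})\iso\Gm^{n+1}/\Delta\Gm\iso T_{\PGL_{n+1}}$. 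This yields the central extension $1\to T_{\PGL_{n+1}}\to G\to\PGL_{n+1}\to1$, centrality holding because the torus consists of the scalar automorphisms $\Oscr(p)\xrightarrow{\lambda_p}\Oscr(p)$ of the markings. Bookkeeping of these three maps is the one delicate point in (i).

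For part (ii): given a twisted form $(\F_\star\Dscr,M_\ast)$, each $M_p$ is fppf-locally $\Oscr(p)$, hence exceptional, and $M_\bullet$ is fppf-locally the full strong exceptional collection $(\Oscr,\dots,\Oscr(n))$ on $\PP^n$; as the vanishing of $\Ext^i$ for $i\neq0$ and generation are fppf-local conditions, $M_\bullet$ is a full strong exceptional collection in $\Dscr$. The plan is then to form the endomorphism algebra $A=\ShEndSp_\Dscr(\bigoplus_p M_p)$, a sheaf of $\Oscr_S$-algebras concentrated in degree $0$ and fppf-locally isomorphic to the ($\Oscr_S$-linear) Beilinson algebra, to obtain $\Dscr\we\Perfscr(A)$ with $\F_\star\Dscr$ corresponding to the filtration by the subalgebras cut out by initial segments of the vertex idempotents, and to conclude that $A$ is a twisted form of the Beilinson algebra. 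Since an $S$-automorphism of the Beilinson algebra fixes every vertex idempotent (the underlying quiver is directed), one identifies $\AutSheaf$ of that algebra with the group $G$ of (i); the induced $\PGL_{n+1}$-torsor then defines a Severi--Brauer scheme $P\to S$, and Bernardara's description of $\Perfscr(P)$ (packaged in the proof of Theorem~\ref{thm:sb}) identifies $\Perfscr(A)$ with $\Perfscr(P)$ compatibly with the filtrations. The main obstacle is matching the two filtered categories precisely; alternatively one invokes the local rigidity of Corollary~\ref{cor:0-tr}, noting that both $\F_\star\Dscr$ and the descended Beilinson filtration of $\Perfscr(P)$ are fppf-locally the Beilinson filtration on $\PP^n$ and carry the descent datum of the $\PGL_{n+1}$-torsor of $P$.

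For part (iii): by Theorem~\ref{thm:sb} the graded pieces are $\mathrm{gr}_p\Perfscr(P)=\frac{\F_p\Perfscr(P)}{\F_{p-1}\Perfscr(P)}\we\Perfscr(S,\alpha^{\otimes p})$. A lift of $\F_\star\Perfscr(P)$ to $\ShBr^{(\F_\star\Perfscr(\PP^n_S),\Oscr(\ast))}$ is a marking $M_\ast$ of $\F_\star\Perfscr(P)$ with each $M_p$ fppf-locally $\we\Oscr(p)$. Then $M_p\notin\F_{p-1}\Perfscr(P)$ --- membership in a fully faithful subcategory is fppf-local and $\Oscr(p)\notin\langle\Oscr,\dots,\Oscr(p-1)\rangle$ --- so its image $\overline M_p$ in $\mathrm{gr}_p\Perfscr(P)$ is a nonzero exceptional object that is fppf-locally a generator; hence $\langle\overline M_p\rangle\we\Perfscr(S)\hookrightarrow\Perfscr(S,\alpha^{\otimes p})$ is fppf-locally an equivalence, hence an equivalence, which forces $\alpha^{\otimes p}=0$. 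Taking $p=1$ gives $\alpha=0$, i.e. $\Ascr\iso\Escr\mathrm{nd}(\Vscr)$ for a rank-$(n+1)$ vector bundle $\Vscr$ on $S$. Conversely, if $\Ascr\iso\Escr\mathrm{nd}(\Vscr)$ then $P\iso\PP(\Vscr)$ is a projective bundle, the relative Beilinson collection $\Oscr_{\PP(\Vscr)},\dots,\Oscr_{\PP(\Vscr)}(n)$ is Zariski-locally (hence fppf-locally) the collection $(\Oscr,\dots,\Oscr(n))$ on $\PP^n$ and hence marks $\F_\star\Perfscr(\PP(\Vscr))$, and since that filtration is fppf-locally the Beilinson filtration it coincides with $\F_\star\Perfscr(P)$ by Corollary~\ref{cor:0-tr}, producing the required lift. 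The forward implication, resting on the non-existence of exceptional objects in $\Perfscr(S,\beta)$ for $\beta\neq0$, is the crux of (iii).
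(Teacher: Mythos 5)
Part (i) of your proposal is essentially the paper's proof: the same fiber sequence over $\prod_p\iota\F_p\Perfscr(\PP^n)$ obtained by evaluating a filtered automorphism on the markings, the same identification of the map on $\pi_1$ with the diagonal $\Gm\rightarrow\Gm^{n+1}$ (killing $\pi_1$ of the marked automorphism sheaf), the same identification of the image in $\pi_0\ShAut_{\F_\star\Perfscr(\PP^n)}\iso\PGL_{n+1}\times\ZZ$ with $\PGL_{n+1}$, and the cokernel of the diagonal giving the central torus $T_{\PGL_{n+1}}$. For part (iii) you take a genuinely different route: the paper works purely with the extension $1\rightarrow T_{\PGL_{n+1}}\rightarrow G\rightarrow\PGL_{n+1}\rightarrow 1$, identifying the lifting obstruction in $\H^2_\fppf(S,T_{\PGL_{n+1}})$ with $(\alpha,\alpha^{\otimes2},\dots,\alpha^{\otimes n})$ (a cocycle check it leaves to the reader), whereas you read off the conclusion from the graded pieces $\Perfscr(S,\alpha^{\otimes p})$ of Theorem~\ref{thm:sb}: a marking forces an exceptional generator of $\mathrm{gr}_1$, hence $\alpha=0$, and conversely $\PP(\Vscr)$ with its relative Beilinson collection provides the lift. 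This is a valid and arguably more self-contained argument (it in effect replaces the cocycle identification by descent of the facts that generation, equivalences, and Brauer classes of $\Perfscr(S,\beta)$ are detected fppf-locally, together with base change for the graded pieces via Proposition~\ref{prop:bcadmissible}); what it does not give is the finer statement, implicit in the paper's computation, of exactly which torus-valued class obstructs the lift.

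The one genuine problem is the primary route you propose for part (ii). The claim that an $S$-automorphism of the Beilinson algebra fixes every vertex idempotent is false, and with it the identification of $\AutSheaf$ of that algebra with the group $G$ of (i). Already for $n=1$ (the Kronecker algebra $A$ with idempotents $e_0,e_1$ and arrow space $V$), conjugation by the unit $1+v$, $v\in V$, is an algebra automorphism sending $e_0\mapsto e_0+v$; automorphisms only preserve the vertex idempotents up to conjugacy. Consequently $\AutSheaf(A)$ is strictly larger than $G$ (for $n=1$ it has dimension $6$, containing the three-dimensional inner automorphism group, while $G$ has dimension $4$); the correct statement is that $G$ is the sheaf of \emph{categorical} automorphisms of $(\Perfscr(A),\{e_pA\})$, onto which inner automorphisms by unipotent units map trivially. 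So the tilting-algebra argument as written does not produce the $\PGL_{n+1}$-torsor the way you claim, and would need to be repaired (e.g.\ by passing to outer automorphisms, or by constructing the comparison map $\AutSheaf(A)\rightarrow G$ carefully). Your fallback is the right fix and is in fact the paper's proof of (ii): a twisted form of the marked filtration is classified by a $G$-torsor by part (i), pushing forward along $G\rightarrow\PGL_{n+1}$ yields a Severi--Brauer scheme $P\rightarrow S$, and the $0$-truncatedness of the fibers (Lemma~\ref{lem:0category}, Corollary~\ref{cor:0-tr}) identifies the underlying filtration with the descended Beilinson filtration on $\Perfscr(P)$; I would make that the argument and drop, or substantially rework, the endomorphism-algebra route.
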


\begin{proof}
    By construction, there is a fiber sequence
    $$\ShAut_{(\F_\star\Perfscr(\PP^n),\Oscr(\ast))}\rightarrow\ShAut_{\F_\star\Perfscr(\PP^n)}\rightarrow\prod_{0\leq
    p\leq n}\iota\F_p\Perfscr(\PP^n)$$
    of sheaves of spaces where the right map sends a filtered automorphism
    $\varphi$ to the $(n+1)$-tuple
    $(\varphi(\Oscr(0)),\ldots,\varphi(\Oscr(n)))$. The left-hand term is the
    fiber over $(\Oscr(0),\ldots,\Oscr(n))$. It follows that there is an exact
    sequence (of sheaves of abelian groups)
    \begin{gather*}
        0\rightarrow\pi_1\ShAut_{(\F_\star\Perfscr(\PP^n),\Oscr(*))}\rightarrow\pi_1\ShAut_{\F_\star\Perfscr(\PP^n)}\xrightarrow{a}\prod_{0\leq
    p\leq
        n}\Gm\\\rightarrow\pi_0\ShAut_{(\F_\star\Perfscr(\PP^n),\Oscr(\ast))}\xrightarrow{b}\pi_0\ShAut_{\F_\star\Perfscr(\PP^n)}.
    \end{gather*}
    We already know that $\pi_1\ShAut_{\F_\star\Perfscr(\PP^n)}\iso\Gm$,
    which appears as the
    natural automorphisms of the identity on $\Perfscr(\PP^n)$. These
    natural isomorphisms of $\Perfscr(\PP^n)$ act as $\Gm$
    on each $\Oscr(p)$. Thus, the map $a$ in the diagram is the diagonal
    embedding of $\Gm$ in $\prod_{0\leq p\leq n}\Gm$. It follows that
    $\pi_1\ShAut_{(\F_\star\Perfscr(\PP^n),\Oscr(\ast))}=0$. We also can see
    directly that the image of the map $b$ in
    $\pi_0\ShAut_{\F_\star\Perfscr(\PP^n)}\iso\PGL_{n+1}\times\ZZ$ is
    $\PGL_{n+1}$ since the copy of $\ZZ$ appears as the suspension operation on
    $\Perfscr(\PP^n)$ which does not preserve the marking.
    This proves part (i).
   
    For part (ii), we see from the map $G\rightarrow\PGL_{n+1}$ that any
    twisted form $(\F_\star\Dscr,N_*)$ of
    $(\F_\star\Perfscr(\PP^n),\Oscr(\ast))$ has the property that
    $\F_\star\Dscr$ is equivalent to the Beilinson filtration on a
    Severi--Brauer scheme $P\rightarrow S$ for a degree $(n+1)$ Azumaya algebra
    over $S$.

    Using the exact sequence $1\rightarrow T_{\PGL_{n+1}}\rightarrow
    G\rightarrow\PGL_{n+1}\rightarrow 1$, we see that a lift of a class
    $P\in\H^1_\fppf(S,\PGL_{n+1})$ to $\H^1_\fppf(S,G)$ exists if and only if
    the obstruction class $\mathrm{ob}(P)\in\H^2_\fppf(S,T_{\PGL_{n+1}})$
    vanishes. We leave it to the reader to check that under the natural
    isomorphism $\prod_{0< p\leq n}\Gm\iso T_{\PGL_{n+1}}$ the obstruction class
    of $P$ is $(\alpha,\alpha^{\otimes 2},\ldots,\alpha^{\otimes n})$.
    Thus, a lift exists if and only if the Azumaya algebra $\Ascr$ has trivial
    Brauer class, which happens if and only if
    $\Ascr\iso\Escr\mathrm{nd}(\Vscr)$ for some vector bundle $\Vscr$, which is
    what we wanted to prove.
\end{proof}
            
In particular, we recover the well-known fact that if the marked filtration descends to the
Severi--Brauer variety of an Azumaya algebra $\Ascr$, then $\Ascr$
is the sheaf of endomorphisms of a vector bundle.

\subsection{Involution surfaces}

Now we study twisted forms of $\PP^1\times\PP^1$.

\begin{definition}
    An {\bf involution surface} is an $S$-scheme $X$ which is fppf-locally isomorphic to $\PP^1_S\times_S\PP^1_S$.
\end{definition}

Involution surfaces are classified by pairs
$(T,\Ascr)$ consisting of
a $\ZZ/2$-Galois extension $T$ of $S$ and a quaternion Azumaya algebra
$\Ascr$ over $T$. The associated surface $X(T,\Ascr)$ is
$\mathrm{Re}_{T/S}\SB(\Ascr)$, the Weil
restriction from $T$ to $S$ of the Severi--Brauer variety of $\Ascr$.
See~\cite[Example~3.3]{auel-bernardara} or~\cite[Section~15.B]{involutions}.

\begin{example}\label{ex:pairs}
    If the quadratic extension $T$ is split, then $T=S\coprod S$ and
    $\Ascr=\Ascr_1\times \Ascr_2$, where $\Ascr_1$ and $\Ascr_2$ are quaternion
    Azumaya
    algebras over $S$. In this case, $X(T,\Ascr)\iso\SB(\Ascr_1)\times\SB(\Ascr_2)$, the
    product of the Severi--Brauer schemes of $\Ascr_1$ and $\Ascr_2$.
\end{example}

Since $\PP^1\times\PP^1$ is the zero-locus of a quadric form in four variables,
its
automorphism group is the smooth algebraic group $\mathrm{PO}_4$.
Now, note that there is a natural inclusion
$$\PGL_2\times\PGL_2\hookrightarrow\mathrm{PO}_4$$
which extends
to an exact sequence
$$0\rightarrow\PGL_2\times\PGL_2\rightarrow\mathrm{PO}_4\rightarrow\ZZ/2\rightarrow
0.$$
In nonabelian cohomology, the map $\H^1(\Spec k,\mathrm{PO}_4)\rightarrow\H^1(\Spec
k,\ZZ/2)$ classifies the quadratic extension $\ell$ and if $\ell$ is trivial,
then the fibers give the pairs $\Ascr_1$ and $\Ascr_2$ as in
Example~\ref{ex:pairs}.

The Picard scheme $\Pic_{\PP^1\times\PP^1/S}$ is discrete and isomorphic to
the constant sheaf $\ZZ^2$. We let
$\Oscr(i,j)=p_1^*\Oscr(i)\otimes p_2^*\Oscr(j)$,
where $p_1$ is projection onto the first factor and $p_2$ projection onto the second
factor. These give all isomorphism classes of line bundles on
$\PP^1\times\PP^1$ if $S$ is the spectrum of a field.
By the theorem of Bondal and Orlov~\cite[Theorem~3.1]{bondal-orlov}, which applies since the canonical class of
$\PP^1\times\PP^1$ is antiample, it follows that there is an exact sequence
$$0\rightarrow\ZZ\times\ZZ^{\oplus
2}\rightarrow\pi_0\ShAut_{\Perfscr(\PP^1\times\PP^1)}\rightarrow\mathrm{PO}_4\rightarrow
0.$$
The rank three kernel corresponds to tensoring with line bundles and with
translation in the derived category.

Now, consider the following two filtrations on $\Perfscr(\PP^1\times\PP^1)$.
First is the $[1]\times[1]$-shaped filtration $\F_\star\Perfscr(\PP^1\times\PP^1)$
corresponding to the admissible subcategories
$$\xymatrix{
&\langle\Oscr(0,0),\Oscr(0,1)\rangle\ar@{^{(}->}[rd]&\\
\langle\Oscr(0,0)\rangle\ar@{^{(}->}[rd]\ar@{^{(}->}[ru]&&\langle(\Oscr(0,0),\Oscr(0,1),\Oscr(1,0),\Oscr(1,1)\rangle\\
&\langle\Oscr(0,0),\Oscr(1,0)\rangle\ar@{^{(}->}[ru]&
}$$
The second is the $[2]$-shaped filtration
$\G_\star\Perfscr(\PP^1\times\PP^1)$ corresponding to the semiorthogonal
decomposition
$$\langle\Oscr(0,0),\Oscr(0,1)\oplus\Oscr(1,0),\Oscr(1,1)\rangle.$$
There is a natural map
$\F_\star\Perfscr(\PP^1\times\PP^1)\rightarrow\G_\star\Perfscr(\PP^1\times\PP^1)$
over the collapse map  $[1]\times[1]\rightarrow[2]$ which sends
the vertices $(0,1)$ and $(1,0)$ to $1$.

Now, we consider the maps
$$\ShBr^{\F_\star}\rightarrow\ShBr^{\G_\star}\rightarrow\ShBr^{\PP^1\times\PP^1},$$
where we have made the evident abbreviations to cut down on notation.

\begin{proposition}
    \begin{enumerate}
        \item[{\rm (a)}] The sequence
            $\ShBr^{\G_\star}\rightarrow\ShBr^{\PP^1\times\PP^1}\rightarrow\ZZ^{\oplus
            2}$ is a fiber equivalence. If $X$ is any involution surface, the
            filtration $\G_\star\Perfscr(\PP^1\times\PP^1)$ descends to
            $\Perfscr(X)$.
        \item[{\rm (b)}] The map $\ShBr^{\F_\star}\rightarrow\ShBr^{\G_\star}$
            is a $\ZZ/2$-torsor. Thus, if $\G_\star\Dscr$ is a twisted form of
            $\G_\star\Perfscr(\PP^1\times\PP^1)$, there is a canonical
            obstruction class $o\in\H^1_\fppf(S,\ZZ/2)$ which vanishes if and
            only if the filtration $\G_\star\Dscr$ can be refined to a
            filtration $\F_\star\Dscr$ which is a twist of
            $\F_\star\Perfscr(\PP^1\times\PP^1)$.
    \end{enumerate}
\end{proposition}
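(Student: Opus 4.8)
Both parts will follow the template of Lemma~\ref{lemma:pn} and Proposition~\ref{prop:rigidifies}: the plan is to compute the sheaves of filtration--preserving autoequivalences of $\Perfscr(\PP^1\times\PP^1)$ and then read the two statements off from Proposition~\ref{prop:main}. Recall from the discussion above that Bondal--Orlov's theorem~\cite{bondal-orlov} gives an extension of sheaves of groups
\[
0\rightarrow\ZZ\times\ZZ^{\oplus 2}\rightarrow\pi_0\ShAut_{\Perfscr(\PP^1\times\PP^1)}\rightarrow\mathrm{PO}_4\rightarrow 0,
\]
in which the first $\ZZ$ is the central shift, the summand $\ZZ^{\oplus 2}\iso\Pic$ acts by tensoring with line bundles, and $\mathrm{PO}_4$ acts on $\ZZ^{\oplus 2}$ through $\mathrm{PO}_4\twoheadrightarrow\ZZ/2$ by interchanging the two factors; moreover $\pi_1\ShAut_{\Perfscr(\PP^1\times\PP^1)}\iso\Gm$ and the higher homotopy sheaves vanish. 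Since any natural automorphism of the identity preserves every filtration, Proposition~\ref{prop:main}(e) reduces (a) and (b) to identifying the subsheaves of $\pi_0\ShAut_{\Perfscr(\PP^1\times\PP^1)}$ preserving $\G_\star$, resp.\ $\F_\star$, the $\pi_{\geq 1}$'s being $\Gm$ in all three cases. First I would pin these down: a nonzero line--bundle twist moves $\Oscr(0,0)$ out of $\langle\Oscr(0,0)\rangle$, so it preserves neither filtration; the shift and every element of $\PGL_2\times\PGL_2=\mathrm{PSO}_4$ (which fixes every $\Oscr(i,j)$) preserve both; and the involution $\sigma$ of the two factors, with $\sigma^*\Oscr(i,j)=\Oscr(j,i)$, preserves $\G_\star$ --- it fixes $\langle\Oscr(0,0)\rangle$ and $\langle\Oscr(0,0),\Oscr(0,1)\oplus\Oscr(1,0)\rangle$ --- but not $\F_\star$, since it swaps $\langle\Oscr(0,0),\Oscr(0,1)\rangle$ and $\langle\Oscr(0,0),\Oscr(1,0)\rangle$. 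Hence $\pi_0\ShAut_{\G_\star}\iso\ZZ\times\mathrm{PO}_4$ and $\pi_0\ShAut_{\F_\star}\iso\ZZ\times\mathrm{PSO}_4$ inside $\pi_0\ShAut_{\Perfscr(\PP^1\times\PP^1)}\iso\ZZ\times(\mathrm{PO}_4\ltimes\ZZ^{\oplus 2})$; the salient points are that $\mathrm{PSO}_4$ is normal of index two in $\mathrm{PO}_4$, whereas $\mathrm{PO}_4$ is \emph{not} normal in $\mathrm{PO}_4\ltimes\ZZ^{\oplus 2}$ precisely because $\sigma$ acts nontrivially on $\Pic$.

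For (a), I would deloop $\ShAut_{\G_\star}\hookrightarrow\ShAut_{\Perfscr(\PP^1\times\PP^1)}$ to get $\ShBr^{\G_\star}\rightarrow\ShBr^{\PP^1\times\PP^1}$ with homotopy fiber the coset sheaf $\ShAut_{\Perfscr(\PP^1\times\PP^1)}/\ShAut_{\G_\star}$; on $\pi_0$ this is $(\mathrm{PO}_4\ltimes\ZZ^{\oplus 2})/\mathrm{PO}_4$, which is canonically the constant sheaf $\ZZ^{\oplus 2}$ because each coset has a unique representative in $\Pic$, while the higher homotopy of the fiber vanishes both because the inclusion is an equivalence on $\pi_{\geq 1}$ and, independently, by Proposition~\ref{prop:main}(d); this identifies the fiber as $\ZZ^{\oplus 2}$. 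For the descent statement, an involution surface $X/S$ is by definition an fppf twisted form of $\PP^1_S\times_S\PP^1_S$ \emph{as a scheme}, hence is classified by a class in $\H^1_{\fppf}(S,\mathrm{PO}_4)$, and the classifying map $S\rightarrow\ShBr^{\PP^1\times\PP^1}$ of $\Perfscr(X)$ factors through $\ShB\mathrm{PO}_4\rightarrow\ShBr^{\PP^1\times\PP^1}$ (the map induced by the pullback action of $\mathrm{PO}_4=\Aut(\PP^1\times\PP^1)$). Since $\mathrm{PO}_4\subseteq\pi_0\ShAut_{\G_\star}$, this factors further through $\ShBr^{\G_\star}$, so $\Perfscr(X)$ lifts to $\ShBr^{\G_\star}(S)$ and $\G_\star$ descends to $\Perfscr(X)$ by Proposition~\ref{prop:main}(c); it is a semiorthogonal decomposition by Theorem~\ref{thm:et-sod}. (Equivalently one applies Theorem~\ref{thm:main-disc}: the descent datum of $X$ along the cover that trivializes it takes values in $\mathrm{PO}_4$, which preserves $\G_\star$, so the cocycle condition is automatic.)

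For (b), $\pi_0\ShAut_{\F_\star}\hookrightarrow\pi_0\ShAut_{\G_\star}$ is the normal inclusion $\ZZ\times\mathrm{PSO}_4\hookrightarrow\ZZ\times\mathrm{PO}_4$ of index two with quotient $\ZZ/2$, and the $\pi_{\geq 1}$'s agree. Thus $\ShAut_{\F_\star}\rightarrow\ShAut_{\G_\star}\rightarrow\ZZ/2$ is a fiber sequence of sheaves of groups, which deloops to the fiber sequence $\ShBr^{\F_\star}\rightarrow\ShBr^{\G_\star}\rightarrow\ShB\ZZ/2$; equivalently $\ShBr^{\F_\star}\rightarrow\ShBr^{\G_\star}$ is the $\ZZ/2$-torsor pulled back from $\mathrm{E}\ZZ/2\rightarrow\ShB\ZZ/2$. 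Given a twisted form $\G_\star\Dscr$ of $\G_\star\Perfscr(\PP^1\times\PP^1)$, i.e.\ a section $s\colon S\rightarrow\ShBr^{\G_\star}$, the composite $S\rightarrow\ShB\ZZ/2$ is a class $o\in\H^1_{\fppf}(S,\ZZ/2)$, and $s$ lifts to $\ShBr^{\F_\star}(S)$ if and only if that composite is nullhomotopic, i.e.\ $o=0$; by Proposition~\ref{prop:main}(c) such a lift is exactly a refinement of $\G_\star\Dscr$ to a twisted form $\F_\star\Dscr$ of $\F_\star\Perfscr(\PP^1\times\PP^1)$, which gives (b).

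The step I expect to be the main obstacle is the bookkeeping of the two automorphism subgroups: verifying that the factor--swap $\sigma$ survives in $\pi_0\ShAut_{\G_\star}$ but not in $\pi_0\ShAut_{\F_\star}$, that no nonzero line--bundle twist survives in either, and --- the reason (a) and (b) look so different --- that $\mathrm{PSO}_4\triangleleft\mathrm{PO}_4$ whereas $\mathrm{PO}_4$ fails to be normal in $\mathrm{PO}_4\ltimes\Pic$. The only other point to handle with care is that involution surfaces are scheme--theoretic twisted forms, hence classified by $\mathrm{PO}_4$-torsors, which is exactly why the $\G_\star$-descent in (a) is unobstructed.
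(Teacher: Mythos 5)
Your proposal is correct and follows exactly the route the paper intends: the paper leaves this proof to the reader ``along the lines of Theorem~\ref{thm:sb}'', i.e.\ compute $\pi_*\ShAut$ via Bondal--Orlov, identify the filtration-preserving subsheaves using Proposition~\ref{prop:main}, and read off the fiber sequences, which is precisely what you do (including correctly treating the fiber in (a) as the coset sheaf $\ZZ^{\oplus 2}$ rather than delooping, since $\ZZ\times\mathrm{PO}_4$ is not normal, and delooping in (b) where $\PGL_2\times\PGL_2\subset\mathrm{PO}_4$ is normal). Your write-up in fact supplies more detail than the paper does.
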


\begin{proof}
    We leave the proof to the reader who should follow the lines of the proof
    of Theorem~\ref{thm:sb}.
\end{proof}

In general, if $X$ is an involution surface over $S$, the obstruction class
$\H^1_\fppf(S,\ZZ/2)$ associated to the problem of lifting the canonical
filtration $\G_\star\Perfscr(X)$ to $\F_\star\Perfscr(X)$ is precisely the class
of $T\rightarrow S$. See~\cite{auel-bernardara}.

\subsection{Descending exceptional blocks}

We want to explain how to prove the main descent theorem of
Ballard--Duncan--McFaddin in the language of this paper
(see~\cite[Theorem~2.15]{ballard-duncan-mcfaddin}).

Let $\Escr$ be an $\Oscr_S$-linear category with a $P$-shaped full exceptional collection
$\{e_p\}_{p\in P}$. Suppose that a group $G$ acts on $\Escr$. We say that the
full exceptional collection is $G$-stable if for each $g\in G$ and $e_p$ the
object $g\cdot e_p$ is in $\{e_p\}_{p\in P}$.

\begin{theorem}[\cite{ballard-duncan-mcfaddin}]\label{thm:bdm}
    Let $X$ be a smooth proper $S$-scheme, $T\rightarrow S$ a $G$-Galois fppf
    cover, $\Escr_T\subseteq\Perfscr(X_T)$ an admissible $\Oscr_T$-linear
    subcategory with a $G$-stable $P$-shaped full exceptional collection
    $\{e_p\}_{p\in P}$. Then, $\Escr_\ell$ descends to
    $\Escr\subseteq\Perfscr(X)$ and $\Escr$ admits a full twisted exceptional
    collection.
\end{theorem}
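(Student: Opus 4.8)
The plan is to run the descent results of Sections~\ref{sec:filtrations}--\ref{sec:sod} along the $G$-torsor $X_T\rightarrow X$, after replacing $P$ by a quotient poset $P/G$, and then to identify the graded pieces by a Morita-theoretic computation.

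Since $T\rightarrow S$ is a $G$-Galois fppf cover it is an fppf $G$-torsor, so the base change $X_T\rightarrow X$ is an fppf $G$-torsor as well, $[X_T/G]\we X$, and hence $\mathbf{F}(X)\we\mathbf{F}(X_T)^{hG}$ for any fppf stack $\mathbf{F}$. The hypothesis that $\{e_p\}_{p\in P}$ is $G$-stable says in particular that the semilinear $G$-action on $\Perfscr(X_T)$ preserves the subcategory $\Escr_T=\langle e_p\,:\,p\in P\rangle$; thus the inclusion $\Escr_T\hookrightarrow\Perfscr(X_T)$ is $G$-equivariant and, $\mathbf{Cat}$ being an fppf stack (Theorem~\ref{thm:lurie}), descends to an $\Oscr_X$-linear functor $\Escr\hookrightarrow\Perfscr(X)$ with $\Escr=\Escr_T^{hG}$. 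Full faithfulness is fppf-local on functors (as in the proof of Theorem~\ref{thm:desc}), so $\Escr\hookrightarrow\Perfscr(X)$ is fully faithful, and it is admissible by Theorem~\ref{thm:et-sod} since $\Escr_T\subseteq\Perfscr(X_T)$ is. This produces the admissible subcategory $\Escr\subseteq\Perfscr(X)$ descending $\Escr_T$.

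Because an equivalence preserves the relations $\ShMap_{\Escr_T}(e_q,e_p)\we 0$ for $p<q$, the $G$-action induces an action of $G$ on the indexing poset $P$ by poset automorphisms, so the quotient $P/G$ is again a poset. For an orbit $O\in P/G$ put $\F_O\Escr_T=\langle e_p\,:\,[p]\leq O\rangle$; this is a $G$-stable, admissible, exhaustive $P/G$-shaped filtration of $\Escr_T$, exceptionally marked by $M_O=\bigoplus_{p\in O}e_p$. This is the step where only $1$-categorical data are needed: by Corollary~\ref{cor:0-tr} (equivalently Lemma~\ref{lem:0category}) the sheaf $\iota\mathbf{Filt}_{P/G}^{\Escr}$ is $0$-truncated, so the bare permutation action of $G$ on the $\F_O\Escr_T$ already constitutes a descent datum, with no higher cocycle conditions to check. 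Since $\mathbf{Filt}_{P/G}$, $\mathbf{ExFilt}_{P/G}$ and $\mathbf{Sod}_{P/G}$ are fppf stacks (Theorem~\ref{thm:desc}, Corollary~\ref{cor:sod-desc}), this $G$-equivariant filtration descends to a $P/G$-shaped semiorthogonal decomposition $\F_\star\Escr\rightarrow\Escr$, exactly as in the proof of Corollary~\ref{cor:main-sod-version}.

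Finally, I would identify the graded pieces and see that they assemble into a full \emph{twisted} exceptional collection. Each $\mathrm{gr}_O\Escr_T\we\bigoplus_{p\in O}\langle e_p\rangle\we\Perfscr\big(\coprod_{p\in O}T\big)$ carries the $G$-action that permutes the components through the stabilizer of $O$ and acts Galois-theoretically on each of them; its homotopy $G$-fixed points $\mathrm{gr}_O\Escr$ form a twisted form of $\Perfscr$ of a finite \'etale $\Oscr_S$-algebra, hence $\mathrm{gr}_O\Escr\we\Perfscr(\Ascr_O)$ for a separable $\Oscr_S$-algebra $\Ascr_O$ that is Azumaya over its \'etale center, the Brauer part of the twist coming from the $\Gm$-indeterminacy in the isomorphisms $g^{*}e_p\we e_{gp}$. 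The $G$-invariant object $M_O=\bigoplus_{p\in O}e_p$ descends, up to this same indeterminacy, to a generator of $\mathrm{gr}_O\Escr$ with endomorphism algebra $\Ascr_O$, i.e.\ to a twisted exceptional object in the sense of~\cite{auel-bernardara}; the vanishing $\ShMap_{\Escr_T}(e_q,e_p)\we 0$ for $p<q$ and the fullness of $\{e_p\}$ descend to the corresponding assertions for $\{M_O\}_{O\in P/G}$. The main obstacle is precisely this last step: one must check that the $\Gm$-indeterminacy in the permutation isomorphisms is absorbed exactly into the separable-algebra structure of the graded pieces and does not obstruct descent, which comes down to unwinding the homotopy $G$-fixed points of $\Perfscr(\coprod_{O}T)$ together with the structure of the sheaf $\pi_0\ShAut_{\Escr_T}$, in the spirit of Proposition~\ref{prop:main}(e).
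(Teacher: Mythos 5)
Your first step --- descending $\Escr_T$ to an admissible $\Escr\subseteq\Perfscr(X)$ by combining the discreteness of filtrations (Lemma~\ref{lem:0category}, Corollary~\ref{cor:0-tr}) with fppf descent and Theorem~\ref{thm:et-sod} --- is correct and is essentially what the paper does by citing Theorem~\ref{thm:main-disc}. Your intermediate $P/G$-shaped semiorthogonal decomposition is also in the spirit of the paper's reduction ``we may assume the collection is a single $G$-orbit.'' But there are two genuine gaps. First, the key structural input is never established: that each $G$-orbit consists of \emph{mutually orthogonal} exceptional objects. Your assertion that the $G$-action induces an action on $P$ by poset automorphisms ``because an equivalence preserves the relations $\ShMap_{\Escr_T}(e_q,e_p)\we 0$ for $p<q$'' is a non sequitur: $G$-stability gives a permutation of the objects preserving those vanishings, but vanishing does not detect the order, so the permutation need not be order-preserving, $P/G$ need not be a poset (antisymmetry can fail for infinite $P$), and, more to the point, your identification $\mathrm{gr}_O\Escr_T\we\bigoplus_{p\in O}\langle e_p\rangle\we\Perfscr(T)^{|O|}$ silently assumes that distinct objects in one orbit are orthogonal in \emph{both} directions, which is exactly the content of \cite[Lemma~2.12]{ballard-duncan-mcfaddin} (proved there by a cycle argument using the ordering). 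The paper explicitly invokes that lemma; without it your graded pieces could be modules over a nontrivial ``upper triangular'' algebra rather than a product of copies of $\Perfscr(T)$, and the admissibility of the orbit filtration is also in doubt.

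Second, the conclusion that $\Escr$ admits a full \emph{twisted} exceptional collection is precisely the step you defer (``the main obstacle is precisely this last step''). Identifying the homotopy fixed points of $\Perfscr(T)^{|O|}$, with its permutation-plus-semilinear $G$-action, as $\Perfscr(\Ascr_O)$ for a separable algebra $\Ascr_O$, and showing the $\mathds{G}_m$-indeterminacy in the isomorphisms $g^*e_p\we e_{gp}$ is absorbed into the Azumaya/\'etale structure rather than obstructing descent of a generator, is the actual content of that claim; the paper closes it by quoting \cite[Theorem~2.16]{auel-bernardara} (or, equivalently, by analyzing the twisted Brauer space of $\Perfscr(S)^n$, as in Proposition~\ref{prop:main}). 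Your proposed route via $\pi_0\mathbf{Aut}_{\Escr_T}$ is the same second option and is plausible, but as written it is a statement of what must be checked rather than an argument, so the proof of the second half of the theorem is missing. Note also that the marked object $M_O=\bigoplus_{p\in O}e_p$ lives in the \emph{marked} stack, whose fibers are not discrete, so you cannot appeal to Corollary~\ref{cor:0-tr} to descend it for free; this is why the paper avoids descending the markings and instead classifies the twisted forms of $\Perfscr(S)^n$ directly.
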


\begin{proof}
    By hypothesis, $G$ preserves $\Escr_T$ so it descends to
    $\Escr\subseteq\Perfscr(X)$ by Theorem~\ref{thm:main-disc}.
    Arguing as in~\cite[Lemma~2.12]{ballard-duncan-mcfaddin}, the $G$-orbits of
    the objects of $\{e_p\}_{p\in P}$ are in fact orthogonal exceptional
    objects. We can assume that $\{e_p\}_{p\in P}$ is in fact a single
    $G$-orbit, which is orthogonal. But, then, $\Escr\we\Perfscr(T)^n$.
    The descended version is then a twisted form of $\Perfscr(S)^n$. Any such
    admits a full twisted exceptional collection for example
    by~\cite[Theorem~2.16]{auel-bernardara} or by using the twisted Brauer
    space for $\Perfscr(S)^n$.
\end{proof}

With more work one can descend individual vector bundles by using markings. We
leave this to the reader.

\small
\bibliographystyle{amsplain}
\bibliography{sod}

\end{document}